\definecolor{darkgreen}{rgb}{0,0.5,0}
\definecolor{darkred}{rgb}{0.7,0,0}
\theoremstyle{plain}
\numberwithin{equation}{section}
\newcommand{\h}{\ensuremath{{\mathcal H}}}
\newcommand{\w}{\ensuremath{{\mathcal W}}}
\newcommand{\ca}{\ensuremath{{\mathcal A}}}
\newcommand{\cl}{\ensuremath{{\mathcal L}}}
\newcommand{\cv}{\ensuremath{{\mathcal V}}}
\newcommand{\pl}[2]{{\frac{\partial #1}{\partial #2}}}
\newcommand{\ti}{\tilde}
\newcommand{\al}{\alpha}
\newcommand{\be}{\beta}
\newcommand{\ga}{\gamma}
\newcommand{\de}{\delta}
\newcommand{\ka}{\kappa}
\newcommand{\si}{\sigma}
\renewcommand{\th}{\theta}
\newcommand{\vph}{\varphi}
\newcommand{\ep}{\varepsilon}
\newcommand{\R}{\ensuremath{{\mathbb R}}}
\newcommand{\N}{\ensuremath{{\mathbb N}}}
\newcommand{\downto}{\downarrow}
\newcommand{\upto}{\uparrow}
\newcommand{\intersect}{\cap}
\newcommand{\beq}{\begin{equation}}
\newcommand{\beql}[1]{\begin{equation}\label{#1}}
\newcommand{\eeq}{\end{equation}}
\newcommand{\beqa}{\begin{equation}\begin{aligned}}
\newcommand{\eeqa}{\end{aligned}\end{equation}}
\newcommand{\brmk}{\begin{rmk}}
\newcommand{\ermk}{\end{rmk}}
\newcommand{\partref}[1]{\hbox{(\csname @roman\endcsname{\ref{#1}})}}
\newcommand{\half}{\frac{1}{2}}
\newtheorem{thm}{Theorem}[section]
\newtheorem{cor}[thm]{Corollary}
\newtheorem{lem}[thm]{Lemma}
\newtheorem{rmk}[thm]{Remark}
\title{\sc delayed parabolic regularity for \\ curve shortening flow\footnote{A video about some of the content of this paper is hosted by \href{https://www.slmath.org/workshops/28375/schedules/36632}{SLMath}}}
\author{Arjun Sobnack and Peter M. Topping}
\date{15 September 2025}
\begin{document}

%

\parskip 8pt
\parindent 0pt

\maketitle

\begin{abstract}
Given two curves bounding a region of area $A$ that evolve under curve shortening flow, we propose the principle that the regularity of one should be controllable in terms of the regularity of the other, starting from time $A/\pi$. We prove several results of this form and demonstrate that no estimate can hold before that time.
As an example application, we construct solutions to graphical curve shortening flow starting with initial data that is merely an $L^1$ function. 
\end{abstract}


\section{Introduction}
\label{intro}

A smooth embedded curve in the plane is said to evolve under curve shortening flow if we deform it in time with velocity given by its geodesic curvature vector. 
In the case that the curve starts as a smooth embedded closed loop, 
the work of Gage, Hamilton and Grayson \cite{gage_hamilton, grayson} tells us that 
there is a unique smooth evolution under this flow, which lasts for 
a time $T=\frac{A}{2\pi}$, where $A$ is the initial enclosed area. 

In the case that the curve starts as the graph of a smooth function $y_0:\R\to\R$, 
the pioneering work of Ecker-Huisken \cite{EH1, EH2}, valid also for higher-dimensional mean curvature flow, tells us that there always exists a solution to the curve shortening flow irrespective of how badly behaved the function $y_0$ is at spatial infinity. A natural generalisation was given by Chou-Zhu \cite{CZ}, who ran the flow starting with an arbitrary proper embedded curve in the plane. 

Ecker-Huisken's flow can be represented as the graph of an evolving function
$y:\R\times [0,\infty)\to\R$ that solves the equation
\beq
\label{GCSF}
y_t=\frac{y_{xx}}{1+y_x^2}\equiv (\arctan(y_x))_x.
\eeq
Such a solution is called a graphical curve shortening flow, and uniqueness is known within this class by the work of Chou-Zhu \cite{CZ}.

A fundamental question when studying a geometric flow is to determine its regularising properties. This could be phrased in terms of decay estimates of some quantity that measures regularity, or by expressing a class of rough initial data from which we can start the flow. 
Ecker-Huisken's result \cite{EH2} could handle initial graphs $y_0$ that are merely \textit{locally} Lipschitz, yielding smooth 
solutions $y:\R\times (0,\infty)\to\R$
that achieve the initial data locally uniformly as $t\downto 0$.
The graphical curve shortening flow equation \eqref{GCSF} implies that the gradient 
$\al:=y_x$ evolves under the parabolic PDE
\beq
\label{alpha_eq}
\al_t=(\arctan \al)_{xx},
\eeq
so a bound on $\al$ induced by a Lipschitz hypothesis on $y$ will bring classical parabolic regularity theory into play.
However, Ecker-Huisken needed to prove local estimates to handle the fact that the initial curve is only \textit{locally} Lipschitz; these estimates ultimately inspired Perelman's pseudolocality theorem for Ricci flow \cite[\S 10]{P1}. The later work of Chou-Zhu \cite{CZ} also works at this level of regularity and includes the uniqueness of graphical solutions when the locally Lipschitz initial data is attained locally uniformly by the flow as $t\downto 0$; see also the uniqueness results of Chou-Kwong \cite{CK} and 
Daskalopoulos-Saez \cite{DS}.

More generally, Evans-Spruck \cite{ES3} considered the problem of starting the graphical flow (in arbitrary dimension) with the graph of a \textit{continuous} function. 
The essential regularity estimate they prove is that a local $C^0$ bound on the initial graph turns into an interior $C^0$ bound on the \textit{gradient} of the solution as we start flowing, so parabolic regularity theory springs into action immediately.
A similar theory can be found in Colding-Minicozzi \cite{CM} and generalisations were given by Andrews-Clutterbuck \cite{AC} and Nagase-Tonegawa \cite{nagase2009interior}. 
See also Lauer \cite{lauer}.
The case of $L^p_{loc}$ initial data for $p>1$ was addressed by Chou-Kwong \cite{CK}.
Just as for locally Lipschitz initial data, for continuous or $L^p$ ($p>1$) initial data these authors are obtaining full quantified regularity instantaneously. That is, their work implies $C^k$ estimates at time $t$ depending only on $t>0$ and a bound on the relevant local norm of  the initial data.

In this paper we do something that is qualitatively different to the papers above. We weaken the regularity class of the initial data to the extent that no \textit{a priori} regularity estimate could ever be given in terms of $t>0$ and the corresponding local norm of the initial data. Instead, our theorems will establish that there is a quantified initial period of the flow in which no estimates 
could hold, but after which parabolic regularity is switched on.
The general \textit{delayed parabolic regularity} principle we propose can be loosely phrased as:

\begin{quote}
Consider two disjoint proper embedded curves $\ga_1$ and $\ga_2$ in the plane, evolving under curve shortening flow for $t\in [0,T)$ and bounding an evolving connected region of finite area $A_0$, with $\ga_1$ and $\ga_2$ either both compact or both noncompact.
Define the threshold time $\tau$ to be $\frac{A_0}{\pi}$.
Then after flowing beyond  the threshold time we expect to start being able to control the regularity of $\ga_1$ at time $t>\tau$ in terms of $t$, $A_0$ and the regularity of $\ga_2$ at time $t$. 
\end{quote}

\begin{figure} 
\centering
\includegraphics[width=\textwidth]{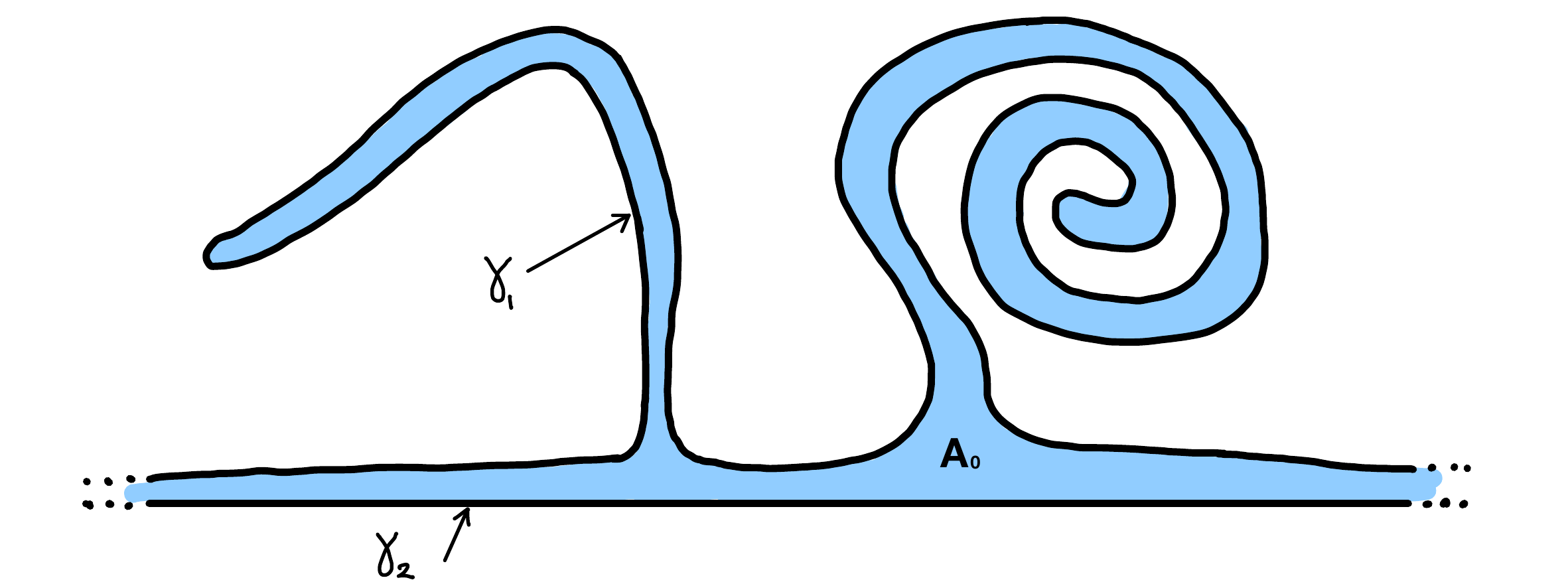}
\caption{Illustration of the delayed parabolic regularity principle}
\label{A_0_graph_general}
\end{figure}

We illustrate a possible scenario for the general delayed parabolic regularity principle in Figure \ref{A_0_graph_general}, where we have a wild curve $\ga_1$ controlled only by knowing the area $A_0$ between it and a regular curve $\ga_2$, which in this case we take to be  a (static) straight line. We expect to control the regularity of the evolution of $\ga_1$ quantifiably after time $\frac{A_0}{\pi}$, but not before.

In this work we develop and prove precise formulations of this principle in the case of graphical curve shortening flow, with $\ga_2$ being the (static) $x$-axis, as in 
Figure \ref{A_0_graph}. 
Before giving any regularity statement, we demonstrate that no traditional regularity estimate can hold prior to time $\tau$:

\definecolor{goodnotesblue}{HTML}{239CFF}

\begin{figure}
\centering
\begin{tikzpicture}

\draw[fill, domain=-5:5, samples=100, name=graph, goodnotesblue!50] 
plot (\x, {
{1/(1+(\x+1)^2) + 4/(1+(\x-2)^2)^2}
}) -- (5,0) -- (-5,0);

\draw[thick, domain=-5:5, samples=100, name=graph] 
plot (\x, {
{1/(1+(\x+1)^2) + 4/(1+(\x-2)^2)^2}
});

\draw [thick, name=xaxis] (-5,0) -- (5,0);
\draw [->] (0,0) -- (0,5) node[left]{$y_0$};

\path (2,1) node{$A_0$};

\draw [thin, ->] (-4,-1) node[below]{$\ga_2$} -- (-3.5,0);

\draw [thin, ->] (-3,2) node[above]{$\ga_1$} -- (-1.5,{1/(1+(-1.5+1)^2) + 4/(1+(-1.5-2)^2)^2});

\end{tikzpicture}
\caption{Graph to evolve under curve shortening flow enclosing area $A_0$.}
\label{A_0_graph}
\end{figure}

\begin{thm}[{No regularity until time $\tau$}]
\label{baby_uncontrollable_thm}
Suppose  $A_0>0$ and define $\tau:=\frac{A_0}{\pi}$.
Then there exists a sequence of 
Lipschitz functions $y_0^n:\R\to[0,\infty)$ with $\|y_0^n\|_{L^1}=A_0$ for each $n\in\N$, such that the unique Ecker-Huisken solutions $y^n:\R\times [0,\infty)\to\R$ of \eqref{GCSF} starting with $y_0^n$ satisfy that for every $t_0\in (0,\tau]$ we have 
$$\|y^n(\cdot,t_0)\|_{L^\infty}\to\infty$$
and 
$$\|y^n_x(\cdot,t_0)\|_{L^\infty}\to\infty$$
as $n\to\infty$.
\end{thm}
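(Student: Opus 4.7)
The plan is to construct $y_0^n$ as Lipschitz tall-and-thin tents, and to force unbounded height of the Ecker-Huisken solution at every $t_0\in (0,\tau)$ by means of a closed-curve barrier sitting inside the reflected region. Explicitly, take
\[
y_0^n(x) := nA_0\,\max(1-n|x|,0),
\]
a Lipschitz tent of height $nA_0$ supported on $[-1/n,1/n]$, so that $\|y_0^n\|_{L^1}=A_0$. The reflected ``doubled'' region $\Omega^n := \{(x,y) : |y| < y_0^n(x)\}$ is a rhombus of area $2A_0 = 2\pi\tau$, matching the threshold time. By uniqueness of Ecker-Huisken solutions (Chou-Zhu) and the $y\mapsto -y$ symmetry of \eqref{GCSF}, $-y^n$ is the Ecker-Huisken evolution of $-y_0^n$; the strong maximum principle gives $y^n>0$ for $t>0$, so the graphs of $\pm y^n(\cdot,t)$ are disjoint smooth embedded curves each evolving under curve shortening flow.

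Fix $\eta_n\downarrow 0$ and choose a smooth strictly convex closed curve $\Gamma_0^n \subset \Omega^n$ enclosing area $2A_0 - \eta_n$ (for instance, a slightly shrunk copy of $\partial\Omega^n$ with the corners rounded), whose aspect ratio is of order $n^2$. Under curve shortening flow, $\Gamma_0^n$ evolves to a smooth convex family $\Gamma_t^n$ extinguishing at $t_c^n := (2A_0 - \eta_n)/(2\pi) \uparrow \tau$. Since $\Gamma_0^n$ is disjoint from the graphs of $\pm y_0^n$ at $t=0$, the CSF avoidance principle gives
\[
\Gamma_t^n \subset \{(x,y) : -y^n(x,t) < y < y^n(x,t)\}\qquad\text{for all }t\in[0,t_c^n),
\]
and hence $\|y^n(\cdot,t)\|_{L^\infty}\geq \max\{y : (x,y)\in\Gamma_t^n\}$ on this interval.

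The core technical step is to show that for every $t_0 \in (0,\tau)$ this maximum tends to $+\infty$ as $n\to\infty$. This follows from the Gage-Hamilton/Andrews asymptotic analysis of convex curve shortening: rescaling so that the enclosed area is fixed, the rescaled flow converges to a unit circle exponentially in the rescaled time $s = -\tfrac{1}{2}\log(t_c^n - t)$. Hence if the initial aspect ratio $r_0^n \to \infty$, then $r^n(t_0) \sim r_0^n (t_c^n - t_0)^{\lambda/2} \to \infty$ at any $t_0$ bounded away from $t_c^n$, and combined with the enclosed-area lower bound $2A_0 - 2\pi t_0 - \eta_n$ this forces the height of $\Gamma_{t_0}^n$ to blow up. The endpoint $t_0 = \tau$ requires an additional argument: once $\|y^n(\cdot,t_0^n)\|_{L^\infty}\to\infty$ along some $t_0^n\uparrow\tau$ (e.g.\ $t_0^n = \tau - \eta_n/\pi$), one controls the total decrease $\int_{t_0^n}^{\tau} |y^n_{xx}(x_n^\star,t)|\,dt$ at the peak---using the fact that the $L^1$ constraint together with a barrier-dictated lower bound on the peak height gives a uniform-in-$n$ control on the peak curvature over this short interval---and thereby transfers the blow-up across to $t_0 = \tau$.

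Finally, the gradient blowup follows from the $L^\infty$ blowup together with the conservation $\int y^n(\cdot,t)\,dx = A_0$ (valid because $y^n_x\to 0$ at spatial infinity for these compactly supported initial data): setting $h_n := \|y^n(\cdot,t_0)\|_{L^\infty}$ with maximum attained at $x_n^\star$, the superlevel set $\{y^n(\cdot,t_0)\geq h_n/2\}$ has measure at most $2A_0/h_n$, so some $\xi_n$ with $|\xi_n - x_n^\star|\leq 3A_0/h_n$ satisfies $y^n(\xi_n,t_0)\leq h_n/2$, and the mean value theorem produces a point where $|y^n_x|\geq h_n^2/(6A_0)\to\infty$. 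The main obstacle in the plan is the quantitative rounding step: while asymptotic roundness of convex CSF is classical, we need a convergence rate that is uniform in the initial elongation, requiring a careful use of the rescaled-flow analysis.
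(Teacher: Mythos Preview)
Your construction of the tent data and the reflected closed-curve barrier is sound, and the reduction of the gradient blowup to the $L^\infty$ blowup via the $L^1$ constraint is correct. But the ``core technical step'' has a genuine gap, and your proposed mechanism in fact points the wrong way.

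You invoke Gage--Hamilton asymptotic roundness to claim that the aspect ratio $r^n(t_0)$ of $\Gamma^n_{t_0}$ stays large because $r^n_0\to\infty$. This is backwards: Gage--Hamilton says the \emph{rescaled} curve tends to a circle, so the aspect ratio \emph{decreases} towards $1$, and the exponential rate in rescaled time $s$ gives no uniform control in the original time when the initial elongation diverges. Your formula $r^n(t_0)\sim r_0^n(t_c^n-t_0)^{\lambda/2}$ has no basis in that theory. What you actually need is a \emph{lower} bound on the height, and there is a one-line fix: for a convex curve under CSF the support function satisfies $h_t=-\kappa<0$, so the width in every direction is monotone nonincreasing. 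Hence the $x$-width of $\Gamma^n_t$ stays $\le 2/n$, while its enclosed area is $2A_0-\eta_n-2\pi t$; for any convex body, $(\text{height})\ge(\text{area})/(\text{width})$, so by the $y\mapsto -y$ symmetry
\[
\max\{y:(x,y)\in\Gamma^n_{t_0}\}\ \ge\ \tfrac{n}{2}\bigl(A_0-\tfrac{\eta_n}{2}-\pi t_0\bigr)\ \longrightarrow\ \infty
\]
for every $t_0\in(0,\tau)$. No asymptotic analysis is needed.

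The endpoint $t_0=\tau$ is a further gap. Your barrier extinguishes at $t_c^n<\tau$, so it yields nothing there, and the transfer argument you sketch (``uniform-in-$n$ control on the peak curvature over $[t_0^n,\tau]$'') is not justified: the curvature at the peak of $y^n$ at times near $\tau$ is precisely the quantity one has no a priori control over. The paper resolves this differently. It uses the self-similar right-angled wedge $\w(x,t)=t^{1/2}W(xt^{-1/2})$ as a \emph{non-compact} barrier on $\{x>0\}$: the upper barrier $\w(x-\tfrac1n,t)$ forces $\int_{1/n}^\infty y^n(\cdot,t)\le\tfrac{\pi t}{2}$, whence area conservation gives $y^n(0,t)\ge\tfrac{n}{2}(A_0-\pi t)$ directly; the lower barrier $-\varepsilon+\w(x+\varepsilon,t)$ then upgrades this to $y^n(x,t)\ge\w(x,t)$ for all $x>0$ and $t\in(0,\tau)$ in the limit. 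Because the limit $y^\infty$ is smooth away from $\{0\}\times(0,\tau]$ and agrees with $\w$ for $t<\tau$, continuity in $t$ gives $y^\infty(x,\tau)=\w(x,\tau)\to\infty$ as $x\to 0^+$, and the local convergence $y^n\to y^\infty$ away from $x=0$ then yields $\|y^n(\cdot,\tau)\|_{L^\infty}\to\infty$ as well. The wedge barrier thus handles the endpoint for free, whereas your compact barrier cannot.
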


A more refined picture will be given in Theorem \ref{refined_ex} once we have recalled some basic facts about curve shortening flow on sectors.

In contrast, for times beyond $\tau$, we will obtain estimates on $y_x$, and hence on $y$ and all of its derivatives. 
An immediate question is how the threshold time $\tau$ will manifest itself in a proof or estimate. We develop two methods of proof in this paper, with a threshold time appearing in different ways in each. One way is indicated by the following theorem, in which a gradually improving upper bound for $\arctan |y_x|$ suddenly becomes non-vacuous at the time it drops below $\pi/2$; we always consider $\arctan$ as a function to $(-\frac{\pi}2,\frac{\pi}2)$.

\begin{thm}[{Delayed $L^1$-Lipschitz smoothing estimate}]
\label{est_thm_1}
Suppose $y_0:\R\to[0,\infty)$ is a locally Lipschitz function with 
$$A_0:=\|y_0\|_{L^1}\in (0,\infty).$$
Then  the unique Ecker-Huisken solution $y:\R\times [0,\infty)\to\R$ of \eqref{GCSF} starting with $y_0$ satisfies
\beql{first_arctan_est}
\arctan|y_x(x,t)|\leq \frac{A_0}{4t}+\frac{\pi}{4},
\eeq
for all $x\in \R$ and $t>0$.
In particular, for all $t>\tau:=\frac{A_0}{\pi}$, we obtain the a priori bound
$$|y_x(x,t)|\leq \tan\left(\frac{A_0}{4t}+\frac{\pi}{4}\right)$$
for all $x\in \R$. 
\end{thm}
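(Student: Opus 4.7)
The plan is to work with the tangent-angle function $\theta:=\arctan(y_x)$, which takes values in $(-\pi/2,\pi/2)$ and, by a direct calculation from \eqref{GCSF}, satisfies the quasilinear parabolic equation
\beq
\theta_t = \cos^2(\theta)\,\theta_{xx}.
\eeq
Two structural features of the flow are central to the argument. First, the parabolic maximum principle applied to \eqref{GCSF} preserves $y\geq 0$. Second, the enclosed-area function $I(x,t):=\int_{-\infty}^x y(x',t)\,dx'$ is bounded in $[0,A_0]$ and, by integrating \eqref{GCSF} in $x$, satisfies $I_t=\theta$ (for data with sufficient decay at $-\infty$), so that $I$ is a bounded time-antiderivative of $\theta$. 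A preliminary step is to approximate $y_0$ by smooth, rapidly decaying data, for which the conservation $\int y\,dx=A_0$ holds classically; the general Lipschitz case then follows from Ecker--Huisken's interior estimates and $L^1$-stability.

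The heart of the proof should be a maximum-principle argument on a coupling of the excess angle $W:=\theta-\pi/4$ with the bounded area $I$. A natural candidate is $F(x,t):=\phi(t)\,W - \alpha I(x,t)$ for appropriate $\phi,\alpha>0$: at a spatial maximum of $F$ where $W>0$, the conditions $F_x=0$ and $F_{xx}\leq 0$ convert, via the $\theta$ equation, into a trigonometric inequality of the shape $\phi'\,W\leq \alpha\bigl(\theta - \tfrac{1}{2}\sin(2\theta)\bigr)$. Combined with $F(\cdot,0)\leq 0$, this yields $\phi(t)\,W\leq \alpha I\leq \alpha A_0$, giving a bound of the form $W\leq (\alpha/\phi(t))\,A_0$. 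The threshold time $\tau=A_0/\pi$ then emerges transparently as the instant at which the resulting bound on $\arctan|y_x|$ first drops below $\pi/2$; the symmetric bound on $(-\theta)_+$ follows from replacing $y_x$ by $-y_x$.

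The main obstacle will be extracting the sharp constants $\tfrac{1}{4}$ and $\tfrac{\pi}{4}$. A direct analysis of the inequality above, checked at the binding case $\theta=\pi/2$, yields the constraint $\phi'\leq 2\alpha$, producing only the weaker estimate $W\leq A_0/(2t)$. Sharpening this by a further factor of $2$ likely requires either a refined auxiliary function (for instance using $\tan\theta$ in place of $W$, or inserting an additional coupling to $y$ or to $I^2$), or alternatively a geometric comparison with shrinking inscribed semicircular barriers of area $A$ and lifetime $A/\pi$ under CSF with diameter on the $x$-axis---a timescale exactly matching $\tau$. The angle $\pi/4$ arises as the symmetry point $\sin\theta=\cos\theta$ of \eqref{GCSF} and is where standard Lyapunov identities with weights such as $-\log\cos\theta$ transition between sign-definite and sign-indefinite dissipation; the factor $\tfrac{1}{4}$ should then reflect the sharp trigonometric inequality at $\theta=\pi/2$ combined with the precise time-coupling in the chosen auxiliary function.
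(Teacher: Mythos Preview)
Your Harnack approach is essentially the one the paper develops in Section~\ref{basic_harnack_sect}: with $I=\ca$ and $\theta=\arctan y_x$, the quantity $\h:=\ca-2t\theta$ satisfies $\h_t-\h_{xx}/(1+y_x^2)=-\arctan y_x-y_x/(1+y_x^2)\in(-\tfrac{\pi}{2},\tfrac{\pi}{2})$, and the maximum principle gives exactly $\arctan y_x\leq \ca(x,t)/(2t)+\pi/4$. Your diagnosis that this yields only $A_0/(2t)+\pi/4$ (a factor of two short) is correct, and the paper confirms this: the Harnack estimate produces \eqref{second_arctan_est}, not \eqref{first_arctan_est}, and only gives the sharp constant at the single point where $\ca=A_0/2$ (or under an evenness/monotonicity hypothesis, cf.~Remark~\ref{even_harnack_rmk}).

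The genuine gap is that none of your proposed sharpenings is the mechanism the paper uses, and it is not clear any of them would work. The paper does \emph{not} refine the auxiliary function, and the relevant comparison solution is not a semicircle but the self-similar \emph{right-angled wedge expander} $\w(x,t)=t^{1/2}W(xt^{-1/2})$ of Section~\ref{RAW_sect}. The sharp bound \eqref{first_arctan_est} is obtained as a corollary of the stronger Theorem~\ref{refine_thm2}, which combines two crossing-argument estimates against translates of $\w$: (i) a pointwise height-controls-gradient bound $|y_x|\leq \tan\ca_1(t^{-1/2}y)$ (Theorem~\ref{ES_improvement_thm}), and (ii) a delayed height bound $y\leq t^{1/2}W[\sigma^{-1}(A_0/2t)]$ (Theorem~\ref{refine_thm1}), the latter coming from the fact that the graph of $\w(\cdot+x_t,t)$ lies below $y(\cdot,t)$ on one side and hence traps area $\sigma(x_t)$ under $y$. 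Composing these gives $\arctan|y_x|\leq F(A_0/2t)$ with $F(a):=\ca_0[\sigma^{-1}(a)]$, and the elementary inequality $F'(a)>\tfrac12$ together with $F(\tfrac{\pi}{2})=\tfrac{\pi}{2}$ then yields $F(a)<\tfrac{\pi}{4}+\tfrac{a}{2}$, i.e.~\eqref{first_arctan_est}. The extra factor of two you are missing thus comes not from a better choice of scalar auxiliary function but from the two-sided area trapping by the wedge barrier in the proof of Theorem~\ref{refine_thm1}.
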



It is important to digest that our gradient bound is not depending on any initial gradient bound and not depending on any initial $C^0$ bound.

The estimate in Theorem \ref{est_thm_1} is sharp in the sense that $\tau$ is sharp and the bound for $|y_x|$ just beyond that time is effectively best possible. 
The estimate is similar to what we obtain by a Harnack argument in Section \ref{basic_harnack_sect}.
However, we will derive \eqref{first_arctan_est} from a slightly more complicated estimate in Theorem \ref{refine_thm2} that 
is stronger for larger times and follows from a crossing argument. 
Indeed, we will give a bound for the solution $y$ itself beyond the threshold time, and combine it with an estimate that controls $|y_x|$ in terms of $y$ and $t$, for arbitrary $t>0$, which is a sharp form of the estimates of Evans-Spruck \cite[\S 5.2]{ES3}.

The statement of these refinements, as well as all further discussion of 
Theorem \ref{baby_uncontrollable_thm}, will revolve around one distinguished solution of curve shortening flow, namely the evolution of 
a straight line that has been bent by 90 degrees at one point.
In Figure \ref{RAW_fig} we illustrate the flow starting with the union of the positive $y$-axis and the non-negative $x$-axis. As we recall in Sections \ref{polar_graphical_sect} and \ref{RAW_sect}, this flow will be self-similar and can be given as a solution of graphical curve shortening flow for positive times that can be written
\beql{W_self_sim}
y(x,t)=\w(x,t):=t^{\half}W(xt^{-\half}),
\eeq
where $W:(0,\infty)\to(0,\infty)$ represents the solution at time $t=1$ (see Figure \ref{RAW_fig}).

\begin{figure}
\centering
\begin{tikzpicture}

\draw [->] (0,0) -- (5,0) node[below]{$x$} ;
\draw [->] (0,0) -- (0,5);

\draw[very thick, domain=1:5.5, samples=50] 
plot (\x, {
(1/(\x))
});
\draw[very thick, domain=1:5.5, samples=50] 
plot ( {(1/(\x))},\x) node[left]{$W(x)$};

\draw[domain=0.5:5, samples=50] 
plot (\x, {
(1/(2*\x))
});
\draw[domain=0.5:5, samples=50] 
plot ( {(1/(2*\x))},\x);

\draw[domain=1:5, samples=50] 
plot (\x, {
(3/(2*\x))
});
\draw[domain=1:5, samples=50] 
plot ( {(3/(2*\x))},\x) ;

\end{tikzpicture}

\caption{Qualitative picture of curve shortening flow evolving a bent line.}
\label{RAW_fig}
\end{figure}

We will give precise properties of $W$ in Lemma \ref{RAW_lem}. For now we can proceed using the illustration in Figure \ref{RAW_fig} and the facts that $W=W^{-1}$ and the area between the graph of $W$ and the $x$ and $y$ axes turns out to be $\frac{\pi}2$. 
Let $\si:(0,\infty)\to (0,\frac{\pi}2)$  be the area of the tail as in Figure \ref{sig_A0_A1_fig}, i.e.,
\beql{si_def}
\si(x):=\int_x^\infty W(s)ds,
\eeq
and let $\ca_0:(0,\infty)\to (0,\frac{\pi}2)$ and $\ca_1:(0,\infty)\to (0,\frac{\pi}2)$ be the areas in Figure \ref{sig_A0_A1_fig}, i.e.,
\beql{ca0_def}
\ca_0(x):=\half xW(x)+\si(x)
\eeq
and
\beql{a0a1_juggling}
\ca_1(x):=\frac{\pi}2-\ca_0(x)=\ca_0(W(x)).
\eeq
By \eqref{si_def} and \eqref{ca0_def} we have
\beql{various_ids}
\si'(x)=-W(x)\qquad \text{ and }\ca_0'(x)=\half(xW'(x)-W(x)).
\eeq

\begin{figure}
\centering
\begin{tikzpicture}

\draw[fill, domain=1.5:5, samples=50, name=graph, gray!30] 
(1.5,0) -- plot (\x, {(1/(\x))}) -- (5,0);

\draw [->] (0,0) -- (5,0); 
\draw [->] (0,0) -- (0,5) node[right]{$\ W(x)$};

\draw[very thick, domain=1:5, samples=50] 
plot (\x, {
(1/(\x))
});
\draw[very thick, domain=1:5, samples=50] 
plot ( {(1/(\x))},\x) ;

\path (2,0.26) node{$\scriptstyle\si(x)$};
\draw (1.5,2/3) -- (1.5,0) node[below]{$x$};

\end{tikzpicture}
\qquad
\begin{tikzpicture}

\draw[fill, domain=1.5:5, samples=50, name=graph, gray!30] 
(1.5,0) -- plot (\x, {(1/(\x))}) -- (5,0);

\draw[pattern=crosshatch, pattern color=black!20, domain=2/3:5, samples=50, ultra thin] 
(0,0) -- plot ( {(1/(\x))},\x) -- (0,5);

\fill[gray!30] (0,0) -- (1.5,2/3) -- (1.5,0);

\draw [->] (0,0) -- (5,0); 
\draw [->] (0,0) -- (0,5) node[right]{$\ W(x)$};

\draw[very thick, domain=1:5, samples=50] 
plot (\x, {
(1/(\x))
});
\draw[very thick, domain=1:5, samples=50] 
plot ( {(1/(\x))},\x) ;

\path (1.5,0.26) node{$\scriptstyle\ca_0(x)$};

\path (0.55,0.8) node{$\scriptstyle\ca_1(x)$};

\draw [dotted, thick] (1.5,2/3) -- (1.5,0) node[below]{$x$};

\draw (0,0) -- (1.5,2/3);

\end{tikzpicture}

\caption{Illustration of $\si(x)$, $\ca_0(x)$ and $\ca_1(x)$.}
\label{sig_A0_A1_fig}
\end{figure}

The refinement of Theorem \ref{est_thm_1} will incorporate the following variant of the estimates of Evans-Spruck \cite[\S 5.2]{ES3}, which controls the gradient $|y_x|$ of a solution of \eqref{GCSF} in terms of the height $y$, and is operational instantaneously. That is, we do not have to wait for a threshold time.

\begin{thm}[Height controls gradient]
\label{ES_improvement_thm}
Suppose that $y:\R\times (0,T)\to (0,\infty)$ is a positive solution  of graphical curve shortening flow \eqref{GCSF}. 
Then for all  $t\in (0,T)$ and all $x\in \R$, we have
\beql{HCG_est}
|y_x|\leq \frac{1}{\tan\ca_0(t^{-\half}y)}\equiv\tan\ca_1(t^{-\half}y)
\equiv\tan\ca_0(W(t^{-\half}y))
\eeq
at $(x,t)$.
\end{thm}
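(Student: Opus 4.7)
The plan is a comparison argument against a translated-and-reflected copy of the self-similar bent-line solution $\w$, chosen to touch the graph of $y$ at the point of interest with precisely the conjectured critical slope.

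After reflection $x\mapsto -x$ and translation in $x$, it suffices to show that for fixed $t_0\in (0,T)$, writing $h_0 := y(0,t_0)>0$ and assuming $y_x(0,t_0)\ge 0$, one has $y_x(0,t_0)\le \tan \ca_1(t_0^{-\half}h_0)$. Set $\xi_0 := W(t_0^{-\half}h_0)$ and $a := t_0^{\half}\xi_0$, and introduce the barrier
\[
b(x,t):= \w(a-x, t), \qquad x<a,\ t>0,
\]
a graphical solution of \eqref{GCSF} on its half-infinite strip, strictly increasing in $x$ from $0$ as $x\to-\infty$ to $+\infty$ as $x\uparrow a$. The crucial preparatory identity is $|W'(x)| = \tan \ca_0(x)$, verified by differentiating \eqref{various_ids}, comparing with the self-similar ODE $W''/(1+W'^2) = \half(W-xW')$ that follows from $\w_t = \w_{xx}/(1+\w_x^2)$, and matching the limit as $x\to\infty$. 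Using $W=W^{-1}$ one then checks $b(0,t_0)=h_0$ and
\[
b_x(0,t_0) = -W'(\xi_0) = \tan\ca_0(\xi_0) = \tan\ca_1(t_0^{-\half}h_0),
\]
the final equality being \eqref{a0a1_juggling}.

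Next I would apply Angenent's Sturmian intersection-counting principle to the two GCSF solutions $y$ and $b$ on the strip $\{x<a\}$. For $t>0$ sufficiently small, the barrier is close to $0$ on any compact subset of $(-\infty,a)$ but blows up as $x\uparrow a$; together with the hypothesis that $y>0$ is continuous, and the strict monotonicity of $b$ in $x$, this forces exactly one intersection of $y(\cdot,t)$ and $b(\cdot,t)$ for all such small $t$. Non-increase of intersection number then bounds $N(t_0)\le 1$, and since $b(0,t_0)=y(0,t_0)=h_0$ already contributes one intersection at $x=0$, that is the only one. The sign pattern of $b-y$ at the two ends of $(-\infty,a)$ then forces $b(x,t_0)\ge y(x,t_0)$ on $[0,a)$ with equality at $0$, so $y_x(0,t_0)\le b_x(0,t_0)=\tan\ca_1(t_0^{-\half}h_0)$. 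The three equivalent forms in \eqref{HCG_est} follow from $\ca_1 = \frac{\pi}{2}-\ca_0$ and $\ca_1(x) = \ca_0(W(x))$.

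The main obstacle is applying the Sturmian argument rigorously on the moving non-compact strip, where one endpoint of the barrier is singular. I expect to handle this by exhausting through compact subintervals $[-R, a-\delta]$ and controlling the boundary contributions: where $b\to\infty$ the sign $b>y$ is automatic, while where $b$ is small the sign $b<y$ is forced by positivity of $y$. The preparatory identity $|W'|=\tan\ca_0$ itself encodes the geometric fact that the tangent angle to the self-similar curve at $(x,W(x))$ is exactly $\ca_0(x)$, and may well already be recorded among the properties of $W$ in Lemma \ref{RAW_lem}.
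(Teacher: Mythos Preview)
Your approach is essentially the paper's: compare $y$ to a translated copy of the self-similar right-angled wedge $\w$ via Angenent's intersection principle and read off the gradient bound from $-\arctan W'=\ca_0$ (which is indeed Part~\ref{W_prime_area_item} of Lemma~\ref{RAW_lem}); your reflected barrier $\w(a-x,t)$ is equivalent to applying the paper's unreflected barrier $\w(x+x_t,t)$ to the reflected solution $y(-x,t)$, which is exactly how the paper obtains the upper bound on $y_x$. For the obstacle you flag, the paper (in Lemma~\ref{y_W_lem}) does not use compact exhaustion but instead makes three reductions: time-shift $y$ so it is smooth down to $t=0$, add $\delta$ so $y>\delta$ uniformly, and time-shift the barrier by $\eta>0$ so that at $t=0$ the part of its graph above height $\delta$ is nearly the vertical line $x=-x_t$ and hence meets the smooth $y(\cdot,0)$ exactly once transversely; then send $\eta$, $\delta$, and the time-shift to zero in that order.
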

Although this theorem is sharp, it will imply the following consequence, which may be more digestible and is still sharp for large $y$.
\begin{cor}[{cf. Evans-Spruck \cite[\S 5.2]{ES3}}]
\label{ES_imp_cor}
In the setting of Theorem \ref{ES_improvement_thm}, there exists a universal $C_1<\infty$
such that 
$$|y_x|\leq C_1 \left(\frac{y^2}{t}\right)^{\half}\exp\left[\frac{y^2}{4t}\right]+C_1$$
at $(x,t)$.
\end{cor}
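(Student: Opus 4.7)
The plan is to take the sharp bound from Theorem~\ref{ES_improvement_thm} and weaken it into the explicit form claimed. Writing $z:=t^{-\half}y>0$, what must be shown is
$$\frac{1}{\tan\ca_0(z)} \;\leq\; C_1\, z\, e^{z^2/4} + C_1 \qquad \text{for all } z>0.$$
Because $\ca_0:(0,\infty)\to (0,\pi/2)$ is continuous, positive and monotone, with $\ca_0(z)\to \pi/2$ as $z\downto 0$ and $\ca_0(z)\to 0$ as $z\to\infty$, the left-hand side is bounded on any interval $(0,Z]$ by a universal constant. So, after choosing $C_1$ sufficiently large to absorb this range, we may restrict attention to $z\geq 1$.

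For $z\geq 1$, I would first discard the nonnegative tail $\si(z)$ in \eqref{ca0_def} to obtain $\ca_0(z)\geq \half z W(z)$, and combine this with the elementary inequalities $\cot\th\leq 1/\sin\th\leq \pi/(2\th)$ valid on $(0,\pi/2]$ to get
$$\frac{1}{\tan \ca_0(z)} \;\leq\; \frac{\pi}{2\ca_0(z)} \;\leq\; \frac{\pi}{z\,W(z)}.$$
The corollary thus reduces to an asymptotic lower bound of the form $W(z)\geq c\,z^{-2} e^{-z^2/4}$, valid on $[Z_0,\infty)$ for some $c>0$ and $Z_0\geq 1$.

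To establish such a bound I would exploit the self-similar ODE obtained by substituting \eqref{W_self_sim} into \eqref{GCSF}, namely
$$\frac{2W''(z)}{1+W'(z)^2} \;=\; W(z)-z W'(z).$$
Since $W$ is decreasing to $0$ and (geometrically) its tangent flattens at infinity, $W$ and $W'$ both tend to $0$ as $z\to\infty$, so for large $z$ this is a small perturbation of the linear, Hermite-type equation $2W''+zW'-W=0$, one of whose fundamental solutions decays like $z^{-2} e^{-z^2/4}$. The nontrivial step, and the main obstacle, is to convert this heuristic into a rigorous one-sided lower bound. I would attempt this by constructing an explicit subsolution $\underline W(z)=c\,z^{-2} e^{-z^2/4}$ on $[Z_0,\infty)$: a preliminary bootstrap, using either the properties of $W$ recorded in Lemma~\ref{RAW_lem} or a geometric comparison of the bent-line flow with translated standard barriers, yields sufficiently fast decay of $W'$ that the factor $1+W'^2$ can be treated as a controlled perturbation; with $Z_0$ large and $c>0$ chosen small enough to match $W$ at $z=Z_0$, an ODE maximum principle on $[Z_0,\infty)$ then concludes $W\geq \underline W$ in that range, which completes the proof.
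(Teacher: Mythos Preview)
Your reduction is essentially the paper's: the paper also scales to $t=1$, passes from the sharp bound of Theorem~\ref{ES_improvement_thm} to $|y_x|\leq 2/(yW(y))$ via $\ca_0(y)\geq \half yW(y)$ (it does this through the equivalent form $|y_x|\leq 1/(-W'(y))$ together with $-W'(y)\geq \arctan(-W'(y))=\ca_0(y)$, rather than through $\cot\th\leq \pi/(2\th)$, but the two routes are interchangeable), and then handles small $y$ separately by a uniform bound. So up to the point where everything rests on a lower bound $W(z)\geq c\,z^{-2}e^{-z^2/4}$ for large $z$, you and the paper agree.

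The difference is in how that lower bound on $W$ is obtained. You propose a subsolution/barrier argument for the self-similar ODE, which you correctly flag as the main obstacle and leave as a sketch. The paper instead exploits a \emph{first integral} of the ODE: differentiating Part~\ref{W_prime_area_item} of Lemma~\ref{RAW_lem} yields $\frac{2W''}{1+(W')^2}=-xW'+W$, from which one checks that $\log(1+(W')^2)-\tfrac{x^2+W^2}{2}-\log(-xW'+W)^2$ is constant; evaluating this constant at the symmetric point where $W(x)=x$ gives the sharp asymptotic $W(x)=(1+o(1))\,2de^{d^2/4}\,x^{-2}e^{-x^2/4}$ (Part~\ref{W_lower_item_new}). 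This is both cleaner and stronger than a one-sided barrier bound, and it bypasses the bootstrap you would need to control $1+(W')^2$. Your barrier strategy is plausible and could be made to work, but since Lemma~\ref{RAW_lem} is already available you can simply cite Part~\ref{W_lower_item_new} and be done.
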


Returning to estimates that can only hold after a threshold time has elapsed, we will prove:

\begin{thm}[Height control after threshold time]
\label{refine_thm1}
In the setting of Theorem \ref{est_thm_1}, for all $t>\tau:=\frac{A_0}{\pi}$ we have
\beql{delayed_height_est}
y(x,t)\leq t^{\half}W\left[\si^{-1}\left(\frac{A_0}{2t}\right)\right]
\eeq
for all $x\in \R$.
\end{thm}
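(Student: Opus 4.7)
The plan is a crossing (intersection-number) argument, comparing $y$ with an appropriately translated copy of the self-similar bent-line solution $\w$. Fix $t_0 > \tau$ and $x_0 \in \R$, and set $a := \si^{-1}(A_0/(2t_0))$ and $L := a\sqrt{t_0}$. I would introduce the shifted bent-line barrier $B(x,t) := \w(x-(x_0-L),t)$, defined on $(x_0-L,\infty)\times(0,\infty)$, which solves the same graphical curve shortening flow as $y$. By construction $B(x_0,t_0) = \sqrt{t_0}\,W(a)$ is the target right-hand side, and a change of variables gives $\int_{x_0}^\infty B(\cdot,t_0)\,dx = t_0\,\si(a) = A_0/2$. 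A preliminary observation I would need is the mass conservation $\int_\R y(\cdot,t)\,dx = A_0$ for all $t \geq 0$, obtained by integrating the divergence form $y_t = (\arctan y_x)_x$ once one knows that $y_x \to 0$ at spatial infinity (expected from the $L^1$ hypothesis together with Ecker-Huisken smoothing). Hence, WLOG, $\int_{x_0}^\infty y(\cdot,t_0)\,dx \leq A_0/2$; otherwise I would run the mirror-image argument with barrier centered at $(x_0+L,0)$.

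Supposing for contradiction that $y(x_0,t_0) > \sqrt{t_0}\,W(a)$, I would examine $\phi := y - B$ on $(x_0-L,\infty)\times(0,t_0]$. Applying the mean value theorem to $\arctan$, $\phi$ satisfies a linear divergence-form parabolic equation $\phi_t = (\phi_x/(1+\xi^2))_x$ for an intermediate $\xi$. I would then invoke Angenent's zero-counting theorem: the number $N(t)$ of sign changes of $\phi(\cdot,t)$ on $(x_0-L,\infty)$ is non-increasing in $t$. At $t=t_0$: the contradiction hypothesis gives $\phi(x_0,t_0) > 0$; the blow-up of $B$ at $(x_0-L)^+$ paired with local boundedness of $y$ yields $\phi \to -\infty$ as $x \to (x_0-L)^+$, producing at least one sign change in $(x_0-L, x_0)$; and $\int_{x_0}^\infty \phi(\cdot,t_0)\,dx \leq 0$ by the area matching, which together with $\phi(x_0,t_0) > 0$ and the continuity of $\phi$ forces a second sign change in $(x_0,\infty)$. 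Thus $N(t_0) \geq 2$.

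To close the contradiction I would show $N(t) \leq 1$ for some small $t>0$. Assuming first that $y_0$ is \emph{strictly} positive on $(x_0-L,\infty)$: for small $t$, $y(\cdot,t)$ is close to $y_0>0$ locally uniformly by Ecker-Huisken, while $B(\cdot,t)$ vanishes pointwise on compacts in $(x_0-L,\infty)$ and blows up only at the left endpoint; consequently $\phi(\cdot,t)$ is negative in a small left-neighbourhood and positive elsewhere, giving exactly $N(t) = 1$. For the general case $y_0 \geq 0$, I would regularise by replacing $y_0$ with $y_0^\delta := y_0 + \delta\eta$ for a strictly positive $\eta \in L^1(\R)\cap\mathrm{Lip}(\R)$ (so $\|y_0^\delta\|_{L^1} = A_0 + \delta\|\eta\|_{L^1}$), apply the strict-positivity case to the corresponding flow $y^\delta$, and then send $\delta\to 0$, using continuous dependence of Ecker-Huisken flows on initial data together with continuity of $W$ and $\si^{-1}$.

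The main obstacle I anticipate is exactly the initial sign-change count in the general case, because genuine zeros of $y_0$ could in principle create extra sign changes of $\phi$ at small times; the regularisation above is the cleanest way I see to avoid this. A secondary technical point is justifying Angenent's theorem on a half-line with $\phi \to -\infty$ at one endpoint, which I would handle by truncating to $[x_0-L+\varepsilon,\,R]$, noting the favourable boundary signs of $\phi$ there, and sending $\varepsilon\to 0$ and $R\to\infty$.
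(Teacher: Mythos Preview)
Your overall strategy matches the paper's: compare $y$ with a translated copy of the self-similar wedge $\w$ via an intersection-number argument, and convert the resulting ordering into an area inequality. However, there is a genuine gap in the step where you claim $N(t)\le 1$ for small $t$.

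The problem is at spatial infinity. Your regularisation $y_0^\de = y_0 + \de\eta$ with $\eta\in L^1(\R)\cap\mathrm{Lip}(\R)$, $\eta>0$, makes $y_0^\de$ strictly positive but \emph{not} bounded below by a positive constant, since necessarily $\eta(x)\to 0$ as $|x|\to\infty$. For fixed small $t>0$ and large $x$ the barrier behaves like $B(x,t)\sim C\,t^{3/2}x^{-2}\exp\big(-x^2/(4t)\big)$, so if $y_0^\de$ decays faster than every Gaussian (for instance $y_0^\de(x)\sim e^{-x^4}$), then $\phi(\cdot,t)=y^\de(\cdot,t)-B(\cdot,t)$ is negative for large $x$ as well as near the left endpoint, already giving $N(t)\ge 2$. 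Angenent's monotonicity then yields only $N(t_0)\le 2$, which is compatible with your lower bound $N(t_0)\ge 2$ and produces no contradiction. The locally uniform convergence you invoke controls $\phi$ on compact sets but says nothing about the tail, and your proposed truncation to $[x_0-L+\ep,R]$ does not help either: the sign of $\phi$ at $x_0-L+\ep$ is positive for $t$ small (both $y$ and $B$ are small, $y>0$) but negative at $t_0$ (where $B$ is huge), so zeros can enter through that boundary.

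The paper evades this by decoupling the crossing argument from the $L^1$ constraint. It first proves a pointwise ordering (Lemma~\ref{y_W_lem}) valid for \emph{any} positive solution, with no integrability hypothesis: if the wedge is positioned so that it meets the graph of $y(\cdot,t)$ at the point of interest, then $y$ lies above the wedge for $x>0$ and below for $x\in(-x_t,0)$. To prove this the paper time-shifts so that $y$ is smooth at $t=0$ and then adds a \emph{constant} $\de$ so that $y\ge\de>0$ uniformly; the portion of the (slightly time-advanced) wedge lying above height $\de$ collapses to a near-vertical segment as the wedge's time parameter tends to zero, forcing exactly one transversal crossing with the smooth, uniformly elevated initial curve. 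Only after this ordering is in hand does the paper integrate $y(x,1)\ge W(x+x_1)$ over $x>0$ and, via reflection, over $x<0$, to obtain $2\si\big(W(y(0,1))\big)\le \|y(\cdot,1)\|_{L^1}=A_0$ from mass conservation. Your difficulty arises precisely because you fix the wedge's position in terms of $A_0$ \emph{before} running the crossing argument, which forces your regularisation to preserve membership in $L^1$ and so rules out the constant shift that would give a clean initial count.
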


By approximating the increasing function $W\circ \si^{-1}:(0,\frac{\pi}2)\to (0,\infty)$, we will show that the estimate \eqref{delayed_height_est} implies the following.
\begin{cor}
\label{refine_cor1}
In the setting of Theorem \ref{est_thm_1}, for all $t>\tau:=\frac{A_0}{\pi}$ we have
$$y\leq 2 t^\half \sqrt{-\log\left(\frac{\pi}2-\frac{A_0}{2t}\right)}$$
for $t> \frac{A_0}\pi$ with $\frac{\pi}2-\frac{A_0}{2t}$ sufficiently small.
\end{cor}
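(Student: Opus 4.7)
The plan is to combine Theorem~\ref{refine_thm1} with an asymptotic analysis of the increasing function $W \circ \si^{-1}$ near the endpoint $\frac{\pi}{2}$ of its domain. Theorem~\ref{refine_thm1} gives $y \leq t^{1/2} W(\si^{-1}(A_0/(2t)))$, so setting $s := A_0/(2t) \in (0, \frac{\pi}{2})$ and $x := \si^{-1}(s)$, and noting that $x \downto 0$ precisely as $t \downto \tau$ (equivalently, as $\frac{\pi}{2} - s \downto 0$), the corollary reduces to the claim
$$W(x) \leq 2\sqrt{-\log\bigl(\tfrac{\pi}{2} - \si(x)\bigr)}$$
for $x$ sufficiently small, or equivalently, after squaring and exponentiating, to
$$\tfrac{\pi}{2} - \si(x) \leq \exp\bigl(-W(x)^2/4\bigr).$$

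To handle the left-hand side, I would exploit the basic identity
$$\tfrac{\pi}{2} = xW(x) + \si(x) + \si(W(x)),$$
which follows by adding the two formulations of \eqref{a0a1_juggling}, i.e.\ $\ca_0(x) + \ca_0(W(x)) = \frac{\pi}{2}$, expanding via \eqref{ca0_def}, and using $W(W(x)) = x$. Rearranging gives $\tfrac{\pi}{2} - \si(x) = xW(x) + \si(W(x))$. Writing $h := W(x)$, which tends to $+\infty$ as $x \downto 0$, and using $xW(x) = hW(h)$, the target inequality becomes
$$hW(h) + \si(h) \leq e^{-h^2/4}$$
for $h$ sufficiently large.

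The principal difficulty is establishing a sharp decay rate for $W$ at infinity. I would derive this from the self-similar ODE satisfied by $W$, obtained by substituting \eqref{W_self_sim} into \eqref{GCSF}: for large $\xi$, both $W$ and $W'$ are small, so the nonlinear terms are lower order and the ODE is well-approximated by the linear equation $2W'' + \xi W' - W = 0$, whose decaying solution has leading asymptotic $\xi^{-2} e^{-\xi^2/4}$. Granted the resulting estimate $W(h) = O(h^{-2} e^{-h^2/4})$ for large $h$, a routine integration gives $\si(h) = O(h^{-3} e^{-h^2/4})$, and $hW(h) = O(h^{-1} e^{-h^2/4})$ directly. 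Both quantities are $o(e^{-h^2/4})$, so the displayed inequality holds for $h$ sufficiently large, completing the argument.
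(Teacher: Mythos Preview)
Your proposal is correct and follows essentially the same route as the paper: both reduce via Theorem~\ref{refine_thm1} to showing $hW(h)+\si(h)\leq e^{-h^2/4}$ for large $h$ (your identity $\tfrac{\pi}{2}-\si(x)=xW(x)+\si(W(x))$ is exactly the paper's geometric observation that the area under $W$ over $(0,X)$ equals $YW(Y)+\si(Y)$), and both finish with the decay $W(h)\sim c\,h^{-2}e^{-h^2/4}$. The only difference is that the paper obtains this decay via the first integral already established in Part~\ref{W_lower_item_new} of Lemma~\ref{RAW_lem}, rather than by linearising the self-similar ODE as you sketch.
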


For many purposes we are happy to wait until time $2\tau$,  well beyond the threshold time $\tau$, in order to obtain a simpler estimate. Because $W\circ \si^{-1}$ is increasing, Theorem \ref{refine_thm1} immediately implies:
\begin{cor}
\label{refine_cor}
In the setting of Theorem \ref{est_thm_1}, for all $t\geq 2\tau:=\frac{2A_0}{\pi}$ we have
\beql{delayed_height_est_basic}
y(x,t)\leq C t^{\half}
\eeq
for universal $C<\infty$ and all $x\in \R$.
\end{cor}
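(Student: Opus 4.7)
The plan is to read off the bound directly from Theorem \ref{refine_thm1} and verify the monotonicity claim invoked by the authors. Theorem \ref{refine_thm1} gives
\[
y(x,t)\leq t^{\half}\,(W\circ\si^{-1})\!\left(\frac{A_0}{2t}\right) \qquad \text{for all } t>\tau,
\]
so the task reduces to bounding the scalar function $(W\circ\si^{-1})(A_0/(2t))$ by a universal constant on the range $t\geq 2\tau$.

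First I would check that $W\circ\si^{-1}:(0,\tfrac{\pi}{2})\to(0,\infty)$ is increasing. From \eqref{various_ids} we have $\si'(x)=-W(x)<0$, so $\si$ is strictly decreasing and hence so is $\si^{-1}$. The function $W$ itself is strictly decreasing (this is part of the qualitative picture of the bent-line solution summarised in Figure \ref{RAW_fig} and will be recorded formally in Lemma \ref{RAW_lem}, since $W=W^{-1}$ maps the positive $x$-axis onto the positive $y$-axis). The composition of two strictly decreasing functions is strictly increasing, as claimed.

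Next I would exploit this monotonicity. For $t\geq 2\tau=\frac{2A_0}{\pi}$, one has $\frac{A_0}{2t}\leq \frac{\pi}{4}$, so
\[
(W\circ\si^{-1})\!\left(\frac{A_0}{2t}\right) \leq (W\circ\si^{-1})\!\left(\frac{\pi}{4}\right)=:C,
\]
and $C<\infty$ depends only on the universal profile $W$. Combining with the bound from Theorem \ref{refine_thm1} yields \eqref{delayed_height_est_basic}.

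There is no real obstacle: the work has all been done in Theorem \ref{refine_thm1}, and this corollary is a one-line monotonicity observation together with the choice of threshold $2\tau$, which is what makes $A_0/(2t)$ fall comfortably inside $(0,\pi/2)$ and in fact below $\pi/4$.
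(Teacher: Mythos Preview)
Your proof is correct and follows exactly the paper's approach: the corollary is stated as an immediate consequence of Theorem~\ref{refine_thm1} together with the observation that $W\circ\si^{-1}$ is increasing. You have simply spelled out the monotonicity verification and the explicit value $C=(W\circ\si^{-1})(\pi/4)$ that the paper leaves implicit.
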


Again, we emphasise that all these height bounds depend only on area and time, and not on any other information about the initial data.

Theorems \ref{ES_improvement_thm} and \ref{refine_thm1}, and Corollaries \ref{ES_imp_cor} and \ref{refine_cor1} will be proved in Section \ref{crossing_arg_sect}.

The control of Theorem \ref{refine_thm1} can  be combined with Theorem \ref{ES_improvement_thm} to give 
our sharp delayed gradient estimate. In order to state it most cleanly, 
we define the function $F:(0,\frac{\pi}2)\to (0,\frac{\pi}2)$ by
$$F(a)=\ca_0\left[\si^{-1}(a)\right].$$ 
Thus for the tail of area $a$,  $F(a)$ is the  area of the grey region  in Figure \ref{F_a_fig}.
\begin{figure}
\centering
\begin{tikzpicture}[xscale=1.3,yscale=1.3]

\draw[fill, domain=1.5:5, samples=50, name=graph, gray!15] 
(1.5,0) -- plot (\x, {(1/(\x))}) -- (5,0);

\draw[pattern=crosshatch, pattern color=black!40, domain=1.5:5, samples=50, name=graph] 
(1.5,0) -- plot (\x, {(1/(\x))}) -- (5,0);

\fill[gray!15] (0,0) -- (1.5,2/3) -- (1.5,0);

\draw [->] (0,0) -- (5,0) node[below]{$x$} ;
\draw [->] (0,0) -- (0,5) node[left]{$W(x)$};

\draw[very thick, domain=1:5, samples=50] 
plot (\x, {
(1/(\x))
});
\draw[very thick, domain=1:5, samples=50] 
plot ( {(1/(\x))},\x) ;

\path (2,0.26) node{$a$};

\draw (0,0) -- (1.5,2/3);

\end{tikzpicture}
\caption{
If the hatched region has area $a$ then $F(a)$ is the area of the entire grey region, including the hatched area.}
\label{F_a_fig}
\end{figure}
In the setting of Theorem \ref{est_thm_1}, we have by the strong maximum principle that $y>0$ for $t>0$, and so Theorem \ref{ES_improvement_thm} tells us 
that
$$|y_x|\leq \tan\ca_0(W(t^{-\half}y)).$$
For $t>\tau$, Theorem \ref{refine_thm1} then implies that 
$$|y_x|\leq \tan\ca_0\left[\si^{-1}\left(\frac{A_0}{2t}\right)\right]
=\tan\left[F\left({\frac{A_0}{2t}}\right)\right].$$
To summarise, we have proved

\begin{thm}[Refined gradient estimate]
\label{refine_thm2}
In the setting of Theorem \ref{est_thm_1}, for all $t>\tau:=\frac{A_0}{\pi}$ we have
$$|y_x(x,t)|<\tan\left[F\left({\frac{A_0}{2t}}\right)\right]$$
for all $x\in \R$.
\end{thm}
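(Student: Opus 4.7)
The plan is to chain Theorems \ref{ES_improvement_thm} and \ref{refine_thm1} together, essentially as already indicated in the discussion immediately preceding the statement. The proof amounts to verifying that the required hypotheses are in place, substituting one bound into the other, and checking a monotonicity statement so that the composite expression simplifies using the identity $W \circ W = \id$.

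\textbf{Step 1 (strict positivity).} The first step is to promote the non-negativity of $y$ to strict positivity, so that Theorem \ref{ES_improvement_thm} applies pointwise. Under the hypothesis $A_0 = \|y_0\|_{L^1} > 0$, the Ecker-Huisken solution satisfies $y(x,t) > 0$ for every $(x,t) \in \R \times (0,\infty)$ by the strong maximum principle applied to \eqref{GCSF}. With $y>0$, Theorem \ref{ES_improvement_thm} yields
$$|y_x(x,t)| \leq \tan \ca_0\!\left(W(t^{-1/2}y(x,t))\right)$$
at every such point.

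\textbf{Step 2 (height insertion and monotonicity).} For $t > \tau = A_0/\pi$, Theorem \ref{refine_thm1} gives $t^{-1/2} y(x,t) \leq W[\si^{-1}(A_0/(2t))]$. To push this through the gradient bound I use two monotonicity facts readable off \eqref{various_ids}: the identity $\ca_0'(x) = \tfrac{1}{2}(xW'(x) - W(x))$ combined with $W>0$ and $W'<0$ shows that $\ca_0$ is strictly decreasing on $(0,\infty)$, and $W$ is itself strictly decreasing; hence $\ca_0 \circ W$ is strictly increasing, and so is $\tan(\ca_0 \circ W)$ since its values lie in $(0,\pi/2)$. Inserting the height bound and using $W(W(\cdot)) = \id$,
$$\tan \ca_0\!\left(W(t^{-1/2}y)\right) \leq \tan \ca_0\!\left(W\bigl(W[\si^{-1}(A_0/(2t))]\bigr)\right) = \tan \ca_0\!\left(\si^{-1}(A_0/(2t))\right) = \tan F\!\left(\tfrac{A_0}{2t}\right),$$
by the very definition of $F$.

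\textbf{Main obstacle.} The substantive analysis is carried out in the input theorems, so the only ``hard'' part here is accounting: verifying the monotonicity of $\ca_0 \circ W$ and recognising that the strong maximum principle is needed to license Theorem \ref{ES_improvement_thm}. The \emph{strict} inequality in the conclusion (as opposed to the weak one displayed above) is inherited from strictness in at least one of the inputs; concretely, if equality held at some $(x_0,t_0)$ with $t_0 > \tau$ then $y(x_0,t_0) = t_0^{1/2} W[\si^{-1}(A_0/(2t_0))]$, which would force $y$ to saturate the barrier underlying Theorem \ref{refine_thm1} — ruled out by the strong maximum principle applied to the difference used in the crossing argument of Section \ref{crossing_arg_sect}.
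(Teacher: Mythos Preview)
Your argument is correct and follows exactly the route the paper takes in the paragraph immediately preceding the theorem: strong maximum principle for positivity, then Theorem \ref{ES_improvement_thm}, then Theorem \ref{refine_thm1}, then the definition of $F$. You have added the explicit monotonicity check for $\ca_0\circ W$ (which the paper leaves implicit) and an argument for the strict inequality (which the paper's own derivation actually only establishes as $\leq$), so if anything your write-up is slightly more careful than the original.
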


Clearly $F(a)$ is monotonically increasing in $a$ and is a diffeomorphism. 
It extends to a homeomorphism from $[0,\frac{\pi}2]$ to itself by setting $F(0)=0$ and 
$F(\frac{\pi}2)=\frac{\pi}2$.
We can also compute\footnote{
Because $F(\si(x))=\ca_0(x)$, the identities of \eqref{various_ids} imply that 
$F'(\si(x))=\frac{\ca_0'(x)}{\si'(x)}=\frac{xW'(x)-W(x)}{-2W(x)}=\half(1+\frac{x(-W'(x))}{W(x)})\geq \half$. } 
for $a\in (0,\frac{\pi}2)$ that
$$F'(a)>\half,$$
which, because $F(\frac{\pi}2)=\frac{\pi}2$, immediately gives 
$$F(a)< \frac{\pi}4+\frac{a}2.$$
In particular, Theorem \ref{refine_thm2} strictly generalises Theorem \ref{est_thm_1}.

Another use of the function $W$ is to give an elaboration of 
Theorem \ref{baby_uncontrollable_thm}.

\begin{figure}
\centering
\begin{tikzpicture}

\draw [very thin, <->, name=xaxis] (-5,0) -- (5,0);
\draw [very thin, ->] (0,0) -- (0,4) node[left]{$y_0^n$};

\draw[thick] (-5,0) -- (-1/3,0) node[below]{$-\frac{1}{n}$} -- (0,3) node[left]{$n$} -- 
(1/3,0) node[below]{$\frac{1}{n}$} -- (5,0);

\end{tikzpicture}
\caption{Graph of $y_0^n$ in Theorem \ref{refined_ex}.}
\label{witches_hat}
\end{figure}

\begin{thm}[Curve shortening flow from a delta function]
\label{refined_ex}
For each $n\in\N$, define the \emph{witch's hat} Lipschitz function $y_0^n:\R\to [0,\infty)$ (see Figure \ref{witches_hat}) by
$$y_0^n(x)=\left\{
\begin{aligned}
0 \qquad & {\textstyle \text{if }x\leq -\frac{1}{n}\text{ or }x\geq \frac{1}{n}}\\
n(1-nx) \qquad  & {\textstyle \text{if }x\in [0,\frac{1}{n})}\\
n(1+nx) \qquad  & {\textstyle \text{if }x\in (-\frac{1}{n},0),}
\end{aligned}
\right.$$
noting that $\|y_0^n\|_{L^1(\R)}=1$ for all $n\in \N$
so the threshold time is $\tau:=\frac{1}{\pi}$ for each $n\in\N$.
Let $y^n:\R\times [0,\infty)\to \R$
be the subsequent Ecker-Huisken solutions to graphical curve shortening flow \eqref{GCSF}.
Then after passing to a subsequence, $y^n$ converges smoothly locally on 
$(\R\times (0,\infty))\setminus(\{0\}\times (0,\tau])$ to a limit solution $y^\infty$. 
Moreover, for $t\in (0,\tau]$ we have
$$y^\infty(x,t)=\left\{
\begin{aligned}
\w(x,t)\qquad & \text{ for }x>0 \\
\w(-x,t)\qquad & \text{ for }x<0,
\end{aligned}
\right.
$$
where $\w(x,t):=t^{\half}W(xt^{-\half})$ as in \eqref{W_self_sim}.
In particular, the limit $y^\infty$ is singular and unbounded for $t\in (0,\tau]$, but smooth and bounded for $t>\tau$.
\end{thm}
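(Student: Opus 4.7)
The plan is to combine the paper's estimates with a barrier argument using $\w$ to obtain compactness, and then to identify the limit via Chou-Zhu uniqueness for flows of proper embedded curves.

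First I would record two basic observations. By the uniqueness of the Ecker-Huisken flow and the even symmetry of $y_0^n$, we have $y^n(-x, t) = y^n(x, t)$. Integrating \eqref{GCSF} (using that $y^n_x \to 0$ at $\pm\infty$, since the initial data is compactly supported) gives $\|y^n(\cdot, t)\|_{L^1(\R)} = 1$, and hence $\int_0^\infty y^n(\cdot, t)\,dx = \frac{1}{2}$ for all $t \geq 0$ by symmetry.

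Next I would establish local uniform $C^k$ estimates for $y^n$ on $(\R \times (0,\infty)) \setminus (\{0\} \times (0,\tau])$ in two regimes. For $t > \tau$, Theorem \ref{est_thm_1} and Corollary \ref{refine_cor} with $A_0 = 1$ give uniform $C^0$ and $C^1$ bounds, which parabolic regularity for \eqref{GCSF} then upgrades to $C^k$ bounds on every compact subset of $\R \times (\tau, \infty)$. For $x \neq 0$ and arbitrary $t > 0$, I would use $\w(|x| - 1/n, t)$ as an upper barrier on $\{|x| > 1/n\}$: this smooth solution of \eqref{GCSF} diverges at the boundary $|x| = 1/n$ and vanishes initially for $|x| > 1/n$, just as $y_0^n$ does. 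After a standard cutoff to handle the infinite boundary value, the parabolic maximum principle yields
\[
y^n(x, t) \leq \w(|x| - 1/n, t) \qquad \text{for } |x| > 1/n,\ t > 0.
\]
Since $\w(|x| - 1/n, t) \to \w(|x|, t)$ locally uniformly on $\{|x| \geq \delta\}$ as $n \to \infty$, parabolic regularity produces uniform $C^k$ bounds on any compact subset of $(\R \times (0,\infty)) \setminus (\{0\} \times (0,\tau])$. A diagonal Arzel\`{a}-Ascoli extraction then yields a subsequence converging smoothly locally to a limit $y^\infty$ solving \eqref{GCSF} and satisfying $y^\infty(x, t) \leq \w(|x|, t)$ for all $x \neq 0$.

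The main technical obstacle is the identification $y^\infty(x, t) = \w(|x|, t)$ for $x \neq 0$ and $t \in (0, \tau]$, for which a matching lower bound is needed. My plan is to invoke Chou-Zhu's uniqueness theorem \cite{CZ} for the curve shortening flow of a proper embedded curve, applied to the bent line $L := \{x = 0,\, y \geq 0\} \cup \{y = 0,\, x \geq 0\}$, whose unique self-similar graphical evolution is precisely $\w$. To apply this, I would verify that $y^\infty|_{\{x > 0\}}$ coincides with the Chou-Zhu evolution of $L$: passing to the limit in the upper barrier gives $y^\infty(x, t) \to 0$ as $t \to 0^+$ for each fixed $x > 0$ and as $x \to \infty$ for each fixed $t > 0$, while mass conservation $\int_0^\infty y^n\,dx = \frac{1}{2}$ combined with the upper bound $y^\infty \leq \w$ (which gives $\int_0^\infty y^\infty(\cdot, t)\,dx \leq t\pi/2 < \frac{1}{2}$ for $t < \tau$) forces the missing mass to concentrate at $x = 0$, so that $y^\infty(x, t) \to \infty$ as $x \to 0^+$ for $t \in (0, \tau)$. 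Consequently the graph of $y^\infty(\cdot, t)$ Hausdorff converges to $L$ as $t \to 0^+$, and Chou-Zhu uniqueness identifies $y^\infty = \w$ on $\{x > 0\} \times (0, \tau]$; the case $x < 0$ follows by symmetry. Finally, for $t > \tau$ the uniform estimates of the first regime give smoothness and boundedness of $y^\infty$ immediately, completing the proof.
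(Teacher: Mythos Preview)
Your compactness argument glosses over a needed step: a $C^0$ bound alone does not trigger parabolic regularity for the quasilinear equation \eqref{GCSF}; you need a gradient bound first. This is fixable --- e.g.\ invoke Theorem~\ref{ES_improvement_thm} to turn your height bound $y^n\le\w(|x|-\tfrac1n,t)$ into a gradient bound away from $x=0$ --- but you should make it explicit. (The paper instead uses its Harnack estimate, Theorem~\ref{est_thm_harnack}: the $\w$ barrier controls the accumulated area $\ca$, and \eqref{second_arctan_est} then reads off a gradient bound.)

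The real gap is in the identification step. Your mass argument does \emph{not} show that $y^\infty(x,t)\to\infty$ as $x\to 0^+$. What it shows is that the \emph{sequence} $y^n(\cdot,t)$ has mass at least $\tfrac12-\tfrac{\pi t}{2}$ concentrating in $[0,\tfrac1n]$ as $n\to\infty$; this mass may simply disappear in the limit (a Dirac at $0$ in the weak-$*$ sense), and there is no reason $y^\infty$ itself should diverge at $0^+$. Without that divergence you cannot exhibit $y^\infty|_{\{x>0\}}$ as a proper curve attaining $L$ as $t\downto 0$, and the Chou--Zhu route collapses. The paper closes this by working with $y^n$ rather than $y^\infty$: one first observes (via Angenent's crossing principle with horizontal lines) that $y^n(\cdot,t)$ is monotone decreasing on $\{x>0\}$, so the mass concentration forces the pointwise bound $y^n(0,t)\ge \tfrac n2(1-\pi t)\to\infty$ locally uniformly on $[0,\tau)$. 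Then for each $\ep>0$ the shifted wedge $-\ep+\w(x+\ep,t)$ serves as a lower barrier for $y^n$ on $[0,\infty)\times[0,t_0]$ (any $t_0<\tau$) once $n$ is large, and passing $n\to\infty$ then $\ep\downto 0$ yields $y^\infty\ge \w$ directly --- no uniqueness theorem required.
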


Essentially the solution $y^\infty$ has initial data given by a delta function. As time advances from $0$ to $\tau$ the mass leaks out into the smooth solution away from $x=0$.
For $t_0\in (0,\tau)$ we can make $y_0^n(\cdot, t_0)$ as singular as we like by taking $n$ large enough, but beyond time $\tau$ the flows $y_0^n$ have uniformly controlled regularity as we know they must by Theorem \ref{est_thm_1}.
The behaviour of the limit $y^\infty$ is somewhat reminiscent of the Ricci flow constructed in \cite{hct}. 
%
Theorem \ref{refined_ex} will be proved in Section \ref{delay_ex_sect}.

In a different direction, our time-delayed estimates can be applied to prove instantaneous regularisation for slightly more constrained initial data that is nevertheless still allowed to be unbounded.

\begin{thm}[{Instantaneous $L^p$-Lipschitz smoothing estimate for $p>1$}]
\label{Lp_reg}
Suppose $y_0:\R\to\R$ is a locally Lipschitz function in $L^p(\R)$ for some $p>1$.
Then  the unique Ecker-Huisken solution $y:\R\times [0,\infty)\to\R$ of \eqref{GCSF} starting with $y_0$ satisfies
\beql{desired_y_bd_Lp}
|y(x,t)|\leq C t^{-\frac1{p-1}}\|y_0\|_{L^p(\R)}^{\frac{p}{p-1}}
\eeq
for all $x\in \R$, all $0<t<\|y_0\|_{L^p(\R)}^{\frac{2p}{p+1}}$,
and universal $C$.
\end{thm}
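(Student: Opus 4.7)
The plan is to reduce to the $L^1$-Lipschitz case treated by Corollary~\ref{refine_cor} via a height-truncation comparison whose cutoff level $h$ is tuned to the time $t$ at which we want the estimate. The time restriction $t<\|y_0\|_{L^p}^{2p/(p+1)}$ in the theorem should fall out naturally as the regime in which the resulting bound is nontrivial.

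First I treat the case $y_0\geq 0$; the lower bound and the general case will follow from the symmetry $y\mapsto -y$ of \eqref{GCSF} and its invariance under vertical translations. For a threshold height $h>0$ to be chosen, let $z:\R\times[0,\infty)\to\R$ denote the Ecker-Huisken solution of \eqref{GCSF} starting from the truncation $(y_0-h)_+$, which is still locally Lipschitz and now lies in $L^1(\R)$. Because \eqref{GCSF} is invariant under the shift $y\mapsto y+h$, the function $z+h$ is the Ecker-Huisken solution with initial data $\max(y_0,h)\geq y_0$, and the comparison principle for the uniformly parabolic equation $y_t=y_{xx}/(1+y_x^2)$ yields
\[
y(x,t)\leq z(x,t)+h.
\]

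A Chebyshev-type bound, based on the pointwise inequality $y_0\leq y_0^{p}h^{1-p}$ on $\{y_0>h\}$, gives
\[
A:=\|(y_0-h)_+\|_{L^1(\R)}\leq h^{1-p}\|y_0\|_{L^p(\R)}^{p}.
\]
I then choose $h$ just large enough that the threshold time $2A/\pi$ of Corollary~\ref{refine_cor} does not exceed $t$; this amounts to
\[
h=C_p\,\|y_0\|_{L^p(\R)}^{p/(p-1)}\,t^{-1/(p-1)}
\]
for a suitable universal constant $C_p$. Invoking Corollary~\ref{refine_cor} for $z$ at time $t$ then produces
\[
y(x,t)\leq C\,t^{\half}+C_p\,\|y_0\|_{L^p(\R)}^{p/(p-1)}\,t^{-1/(p-1)}.
\]
A short exponent bookkeeping, using $\half+\frac{1}{p-1}=\frac{p+1}{2(p-1)}$, shows that the first term is dominated by the second precisely when $t<\|y_0\|_{L^p(\R)}^{2p/(p+1)}$, which yields \eqref{desired_y_bd_Lp}.

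The matching lower bound follows by running the identical argument for the solution starting from $-y_0\in L^p(\R)$, using that \eqref{GCSF} is odd in $y$. I expect the only non-routine point to be checking that the truncation $(y_0-h)_+$ genuinely fits within the Ecker-Huisken existence framework and that the comparison principle applies to these unbounded locally Lipschitz graphs; both facts are classical, but deserve to be flagged rather than glossed over. The choice of $h$ and the optimisation of exponents, by contrast, are forced once the truncation strategy is identified.
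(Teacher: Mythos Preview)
Your proof is correct and follows essentially the same strategy as the paper's: truncate at a height $h$ (the paper calls it $k$), compare with the shifted solution, apply the delayed $L^1$ height bound of Corollary~\ref{refine_cor} to the truncated piece, control the cutoff level via the same Chebyshev-type inequality, and absorb the resulting $t^{1/2}$ term using the time restriction. The only cosmetic differences are that the paper treats the case $t\geq\|y_0\|_{L^1}$ separately and chooses $k$ implicitly via $\|(y_0-k)_+\|_{L^1}=t$ rather than explicitly from the Chebyshev bound, and it justifies the comparison step by citing Corollary~\ref{comp_princ_loc_lip} (note the equation is not uniformly parabolic, so this is the right reference for the point you flag at the end).
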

One could also obtain explicit bounds on $y$ for larger $t$. We prove Theorem \ref{Lp_reg} in Section \ref{Lpsect}.

Our time-delayed estimates can be applied to prove existence of the curve shortening flow
with rough data.
The (standard) general approach is to approximate the rough initial data by smooth data, flow the smoothed data, and try to take a limit of the flows. The crucial point is that our estimates are precisely those required to give compactness, allowing us to extract a limit of a subsequence of these flows.

For context, recall that Ilmanen \cite[Example 7.3]{ilmanen} proved existence of curve shortening flow starting essentially with the graph of the function $y_0(x)=x^{-\be}$ for $\be\in (0,1)$, making the spike contract instantly.
(See also Peachey \cite{peachey} and Bourni-Reiris \cite{BR}.)
Here we flow a general $L^1$ function.

\begin{thm}[{Flowing from $L^1(\R)$ initial data}]
\label{L1_exist_thm}
Let $y_0\in L^1(\R)$.
Then there exists a smooth solution 
$y:\R\times (0,\infty)\to \R$ to 
\eqref{GCSF} 
such that 
$$y(\cdot,t)\to y_0\qquad \text{ in } L^1(\R) \text{ as }t\downto 0.$$
\end{thm}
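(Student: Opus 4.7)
The plan is to approximate $y_0$ by smooth compactly supported $y_0^n \to y_0$ in $L^1(\R)$, flow each by Ecker-Huisken to get smooth $y^n:\R\times[0,\infty)\to\R$ solving \eqref{GCSF}, extract a subsequential limit using the delayed regularity estimates of this paper, and verify the $L^1$ initial trace.

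Let $\tilde y^n$ denote the Ecker-Huisken flow starting from $|y_0^n|\geq 0$. The comparison principle for \eqref{GCSF} applied to the ordering $-\tilde y^n\leq y_0^n\leq \tilde y^n$ yields $|y^n|\leq \tilde y^n$ throughout. Integrating \eqref{GCSF} spatially, and using the decay of compactly-supported Ecker-Huisken flows at infinity (controllable by comparison with the bent-line self-similar solution $\w$), gives conservation $\|\tilde y^n(\cdot,t)\|_{L^1}=\|y_0^n\|_{L^1}$, and hence the uniform bound $\|y^n(\cdot,t)\|_{L^1}\leq A_0+1$ for all $n$ and $t\geq 0$, where $A_0:=\|y_0\|_{L^1}$.

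Next, Theorem \ref{refine_thm1} applied to $\tilde y^n$ (or the cleaner Corollary \ref{refine_cor}) provides a uniform-in-large-$n$ height bound on $\R\times [t_\star,\infty)$ for any $t_\star>A_0/\pi=:\tau$. Translating $y^n$ upward by a constant on any compact time sub-interval of $(\tau,\infty)$ makes it positive (by invariance of \eqref{GCSF} under $y$-translation), so Corollary \ref{ES_imp_cor} then furnishes a uniform gradient bound there. Interior parabolic Schauder estimates applied to \eqref{alpha_eq}, which becomes uniformly parabolic once $y^n_x$ is bounded, upgrade this to uniform $C^k$ bounds on compact subsets of $\R\times(\tau,\infty)$; a diagonal extraction then delivers a subsequence converging smoothly locally on $\R\times(\tau,\infty)$ to a smooth $y$ solving \eqref{GCSF}.

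The main obstacles are (i) extending the smooth limit $y$ to all of $\R\times(0,\infty)$ and (ii) verifying $y(\cdot,t)\to y_0$ in $L^1(\R)$ as $t\downto 0$. For (i), the key is that $y_0\in L^1(\R)$ is non-atomic: for any $(x_0,t_0)$ with $t_0>0$ there exists $R>0$ with $\int_{|x-x_0|<R}|y_0|\,dx<\pi t_0/4$. A localized form of the delayed regularity, established by comparison with cut-off barrier flows in the spirit of Ecker-Huisken pseudolocality, then produces smoothness of $y^n$ near $(x_0,t_0)$ uniformly in large $n$, extending $y$ smoothly to all of $\R\times(0,\infty)$. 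For (ii), I would combine the locally uniform attainment $y^n(\cdot,t)\to y_0^n$ for each fixed $n$ with the equi-integrability of $\{y_0^n\}$ (supplied by its $L^1$-convergence) and uniform tail control of $\{y^n(\cdot,t)\}$ via the bent-line comparison, obtaining $\|y^n(\cdot,t)-y_0^n\|_{L^1}\to 0$ as $t\downto 0$ uniformly in $n$, from which a diagonal argument concludes $y(\cdot,t)\to y_0$ in $L^1(\R)$.
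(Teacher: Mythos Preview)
Your overall architecture (approximate, extract a limit via the delayed estimates, verify the $L^1$ trace) matches the paper's, but both of your ``main obstacles'' (i) and (ii) are handled by specific devices that you have not supplied, and your proposed substitutes do not obviously work.

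For (i), you only obtain compactness on $\R\times(\tau,\infty)$ with $\tau=A_0/\pi$, and then appeal to ``a localized form of the delayed regularity'' to reach earlier times. That localisation is genuinely nontrivial; in the paper it is postponed to \cite{arjun_thesis} and only used for the measure-data extension. The paper avoids it entirely for $L^1$ data by a \emph{level-shift trick}: to get a height bound at an arbitrary time $\ep>0$, choose $k\geq 0$ so that $\|(y_0-k)_+\|_{L^1}=\tfrac{\pi}{2}\ep$, and compare $y^n$ with $k+\tilde y^n$, where $\tilde y^n$ is the Ecker--Huisken flow from $(y_0^n-k)_+$. Now $\tilde y^n$ has threshold time $\leq\ep$, so Corollary~\ref{refine_cor} bounds $\tilde y^n(\cdot,\ep)$ by a universal constant times $\ep^{1/2}$, giving a uniform-in-$n$ bound $y^n(\cdot,\ep)\leq k+C\ep^{1/2}$. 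This yields compactness on all of $\R\times(0,\infty)$ with no localisation.

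For (ii), you assert that $\|y^n(\cdot,t)-y_0^n\|_{L^1}\to 0$ as $t\downarrow 0$ \emph{uniformly in $n$}. This is the crux, and equi-integrability of $\{y_0^n\}$ together with bent-line tail control on $y^n$ does not give it: the rate of local-uniform convergence $y^n(\cdot,t)\to y_0^n$ depends on the Lipschitz constant of $y_0^n$, which blows up with $n$, and the bent-line barriers $\w(\pm x - R_n,t)$ only control tails outside the (growing) support of $y_0^n$. The paper instead proves an $L^1$ separation estimate (Lemma~\ref{separation_lem}): for any two graphical flows,
\[
\big\|\big(y^1(t)-y^2(t)\big)_+\big\|_{L^1(\R)}\leq \big\|\big(y^1(s)-y^2(s)\big)_+\big\|_{L^1(\R)}+2\pi(t-s),
\]
obtained by integrating $(\arctan y^1_x-\arctan y^2_x)_x$ over $\{y^1>y^2\}$ against a cutoff. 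Fixing a single large $m$ and applying this with $(y^1,y^2)=(y^n,y^m)$ gives $\|y^n(t)-y^m(t)\|_{L^1}\leq \|y_0^n-y_0^m\|_{L^1}+4\pi t$, uniformly in $n$; combined with the (non-uniform) fact that $y^m(t)\to y_0^m$ in $L^1$ for this one fixed $m$, and then letting $n\to\infty$ on compact intervals, one concludes $y(t)\to y_0$ in $L^1$. You should replace your equi-integrability argument with this contraction-type lemma.
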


We will prove this theorem in Section \ref{L1_exist_sect}. There we also remark that generalisations of the techniques in this paper can handle merely $L^1_{loc}$ data, 
or even data that is a nonatomic Radon measure. 
Each of these existence results answers questions raised by Chou-Kwong \cite{CK}, who handled the case of $L^p_{loc}$ data with $p>1$, when instantaneous smoothing is available.
Details will be found in \cite{arjun_thesis}.

\brmk
\label{with_yin_rmk}
Although our results in this paper have generally a different flavour to previous curve shortening flow results, there is an intriguing resemblance to the  Ricci flow theory that originates from the estimates of the second author and H. Yin in \cite{TY1}.
This, in turn, is related to estimates arising in K\"ahler Ricci flow. In particular, one can compare with the work of Di Nezza and Lu \cite{DN_L}. In future work \cite{ST3} we develop further the Harnack estimates introduced in this paper and make contact with work of Neves on Lagrangian mean curvature flow \cite{neves}.
In order to prove generalisations of the results in this paper, we expect the monotonicity property discovered in \cite{ST_modulus} to be useful.
\ermk

\section{The Harnack estimate for graphical curve shortening flow}
\label{basic_harnack_sect}

In this section we introduce  a Harnack quantity for graphical curve shortening flow that is already enough to illustrate delayed parabolic regularity and to prove existence of solutions with very general rough initial data. It is sharp in certain circumstances, but in a different way to Theorem \ref{est_thm_1}.
We emphasise that our Harnack estimate does not require any convexity assumption.

\begin{thm}
\label{est_thm_harnack}
Suppose $y_0:\R\to[0,\infty)$ is a  Lipschitz function with compact support, and define 
$$A_0:=\|y_0\|_{L^1}.$$
Consider the unique Ecker-Huisken solution $y:\R\times [0,\infty)\to\R$ of \eqref{GCSF} starting with $y_0$. Then $y\geq 0$, the total area remains constant, i.e., $\|y(\cdot,t)\|_{L^1(\R)}=A_0$ for all $t\geq 0$, 
and the \emph{accumulated area}
\beql{acc_area_fn}
\ca(x,t):=\int_{-\infty}^x y(s,t)ds
\eeq
(see Figure \ref{acc_area_fig}) defines a function  $\ca:\R\times [0,\infty)\to [0,A_0]$, which satisfies
\beql{second_arctan_est}
\arctan y_x(x,t)\leq \frac{\ca(x,t)}{2t}+\frac{\pi}4.
\eeq
for all $x\in \R$ and $t>0$.
\end{thm}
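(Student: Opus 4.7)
The plan is to isolate a Harnack-type quantity $H$ built from $\arctan y_x$ and $\ca$ such that $H\le 0$ is precisely \eqref{second_arctan_est}, and prove $H\le 0$ by the parabolic maximum principle. Positivity of $y$ is immediate from the avoidance principle by comparison with the stationary solution $y\equiv 0$. For conservation of area, differentiating under the integral gives
$$\pl{}{t}\int_\R y\,dx=\int_\R (\arctan y_x)_x\,dx=\bigl[\arctan y_x\bigr]_{-\infty}^{+\infty},$$
which vanishes once we know $y_x(\cdot,t)\to 0$ at spatial infinity. Since $y_0$ has compact support, I expect this decay from sandwiching the flow between $0$ below and appropriately translated bent-line solutions $\w(\cdot-a,t)$ above, via the avoidance principle.

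For the main estimate, set $u:=\arctan y_x$. Since $y_t=u_x$, differentiating in $x$ gives $y_{xt}=u_{xx}$, while differentiating $y_x=\tan u$ in $t$ gives $y_{xt}=\sec^2(u)\,u_t$, so
\beql{u_evol}
u_t=\frac{u_{xx}}{1+y_x^2}.
\eeq
Directly, $\ca_x=y$, and using the spatial decay above, $\ca_t(x,t)=\int_{-\infty}^x(\arctan y_s)_s\,ds=u(x,t)$. I then define
$$H:=2tu-\ca-\frac{\pi t}{2},$$
so that $H\le 0$ is equivalent to \eqref{second_arctan_est}. A direct calculation using \eqref{u_evol} and $\ca_{xx}=y_x$ yields
$$H_t-\frac{H_{xx}}{1+y_x^2}=u-\frac{\pi}{2}+\frac{y_x}{1+y_x^2}=:g(y_x).$$
The function $g(s)=\arctan s-\frac{\pi}{2}+\frac{s}{1+s^2}$ satisfies $g'(s)=\frac{2}{(1+s^2)^2}>0$ and $\lim_{s\to+\infty}g(s)=0$, hence $g<0$ on all of $\R$.

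Thus $H$ is a strict parabolic subsolution with smooth coefficients. Initially $H=-\ca_0\le 0$, and as $x\to-\infty$ respectively $x\to+\infty$ we have $H\to-\tfrac{\pi t}{2}$ respectively $H\to-A_0-\tfrac{\pi t}{2}$, both nonpositive. The weak maximum principle then closes the argument: at any interior positive maximum $(x_0,t_0)$ of $H$ we would have $H_t\ge 0$ and $H_{xx}\le 0$, contradicting $g(y_x)<0$. The principal obstacle is rigorously justifying the spatial decay of $y$ and $y_x$ at infinity, needed both for the boundary terms in $L^1$ conservation and for ruling out loss of the maximum at infinity; once that is handled by the comparison argument sketched above, the algebra of the Harnack quantity and the sign of $g$ do the rest.
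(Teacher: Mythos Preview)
Your proposal is correct and follows essentially the same route as the paper: your Harnack quantity $H=2tu-\ca-\tfrac{\pi t}{2}$ is just $-\h-\tfrac{\pi t}{2}$ for the paper's $\h:=\ca-2t\arctan y_x$, and your subsolution identity $H_t-\tfrac{H_{xx}}{1+y_x^2}=g(y_x)<0$ is exactly the paper's computation $\h_t-\tfrac{\h_{xx}}{1+\alpha^2}=-f(\alpha)\in(-\tfrac{\pi}{2},\tfrac{\pi}{2})$ shifted by $-\tfrac{\pi}{2}$. The only cosmetic difference is that the paper obtains the needed spatial decay via shrinking-circle barriers rather than the bent-line solution $\w$, though it explicitly notes (and later proves in Lemma~\ref{rapid_decay_cpt_spt_lem}) that your barrier choice works too.
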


\begin{figure}
\centering
\begin{tikzpicture}

\draw[fill, domain=-5:-0.9, samples=100, name=graph, blue!8] 
plot (\x, {
{1.5/(1+(\x+0.5)^2) + 3/(1+(\x-2)^2)^2}
}) -- (-0.9,0);

\draw[thick, domain=-5:5, samples=100, name=graph] 
plot (\x, {
{1.5/(1+(\x+0.5)^2) + 3/(1+(\x-2)^2)^2}
});

\draw [thick, name=xaxis] (-5,0) -- (5,0);
\draw [->] (0,0) -- (0,4) node[left]{$y(\cdot,t)$};

\path (-1.4,0.35) node{$\scriptstyle\ca(x,t)$};

\draw (-0.9,{1.5/(1+(-0.9+0.5)^2) + 3/(1+(-0.9-2)^2)^2}) -- (-0.9,0) node[below]{$x$};

\end{tikzpicture}
\caption{The accumulated area $\ca(x,t)$.}
\label{acc_area_fig}
\end{figure}

We will note shortly that for each $t> 0$, the gradient $y_x(x,t)$ converges to zero as $x\to-\infty$. Theorem \ref{est_thm_harnack} tells us how fast control on the gradient propagates into the interior of the domain in terms of the area under the graph.
It tells us that there is a front at $x=x_0(t)$, where $\ca(x_0(t),t)=\frac{\pi t}{2}$, to the left of which we have gradient control and to the right of which we have nothing.

\begin{proof}
A straightforward barrier argument using slowly shrinking large circles below the $x$-axis shows that the solution $y$ must remain non-negative.
Using slowly shrinking large circles \textit{above} the $x$-axis shows that the solution $y(x,t)$ continues to decay to zero as $|x|\to\infty$ also for $t>0$. 
If we choose the circles sufficiently carefully we find that for all $T>0$
there exist $C<\infty$ and $x_0<\infty$ depending only on the initial data $y_0$ and $T$ such that 
\beql{required_y_bd}
y(x,t)\leq \frac{C}{x^2},
\eeq
for all $t\in [0,T]$ and all $x$ with $|x|\geq x_0$.
To see this for $x\geq x_0$, first write $\bar{y}:=\max y_0$ and $\bar{x}:=\sup\{x\,:\,y_0(x)\neq 0\}$.
We will argue that circles 
tangent to (and above) the $x$-axis at $(x,0)$ of radius $r=\frac{x^2}{3\bar{y}}$ will be disjoint from the  graph of $y_0$ for large enough $x$ 
except at the point of tangency. Take any point $(a,b)$ on the graph of $y_0$, but not on the $x$-axis. 
By constraining $x_0$ to be large enough, depending on $\bar{y}$ and $\bar{x}$, we may assume that $x>\bar{x}\geq a$ and $b\leq \bar{y}< \frac{x^2}{3\bar{y}}$.
Then the squared distance from $(a,b)$ to the circle's centre $(x,r)$, with $r=\frac{x^2}{3\bar{y}}$,
is 
\beqa
(x-a)^2+(r-b)^2 &\geq 
(x-\bar{x})^2+(r-\bar{y})^2\\
& \geq r^2 + x^2 -2\bar{y}r +\text{lower order terms in }x\\
& \geq r^2 + x^2 -\frac{2}{3}x^2 +\text{lower order terms in }x\\
&> r^2
\eeqa
provided we constrain $x_0$ to be sufficiently large (independently of $a$ and $b$) so that the remaining quadratic term $x^2/3$ dominates the lower order terms. Thus $(a,b)$ lies outside the circle.

These circles, shrinking under curve shortening flow, will provide suitable barriers to obtain \eqref{required_y_bd}.
The details above are not essential because a dramatically stronger estimate will arise as a by-product of Section \ref{RAW_sect}, in Lemma \ref{rapid_decay_cpt_spt_lem}.
One can also obtain sufficient control by using so-called Angenent ovals or grim reapers as barriers.

A consequence of the decay \eqref{required_y_bd} is that the area between the graph of $y(\cdot,t)$ and the $x$-axis remains finite.
Additionally, since we know by Ecker-Huisken \cite[Corollary 3.2]{EH1} that $y(\cdot,t)$ is Lipschitz for all $t>0$ with the same Lipschitz constant as for $y_0$, we may invoke parabolic regularity theory on space-time regions $(x_1-1,x_1+1)\times (0,T]$, where $|x_1|$ is sufficiently large so that $y_0$ is identically zero throughout $(x_1-1,x_1+1)$, 
and sufficiently large so that $|x_1|>|x_0|+1$.
We deduce that all derivatives of $y$ decay quadratically analogously to \eqref{required_y_bd}. In particular, there exists $C<\infty$ such that 
$$|y_x(x,t)|+|y_t(x,t)|\leq \frac{C}{x^2},$$
provided that $t\in [0,T]$ and $|x|$ is sufficiently large, independently of $t$. 

This regular behaviour at spatial infinity justifies the computations we make below. For example, the total area under $y$ remains constant:
$$\frac{d}{dt}\int_{-\infty}^\infty y(s,t)ds=\int_{-\infty}^\infty y_t(s,t)ds
=\int_{-\infty}^\infty (\arctan(y_x))_x(s)ds
=0.$$
By the decay \eqref{required_y_bd} and the local uniform convergence of $y(\cdot,t)$ to $y_0$ as $t\downto 0$, the area is continuous at $t=0$
and so the total area remains as $A_0$.

These considerations show that the accumulated area function $\ca:\R\times [0,\infty)\to [0,A_0]$ defined in \eqref{acc_area_fn} makes sense.
Note that for fixed $t\geq 0$, the function $\ca(\cdot,t)$ increases from $0$ to $A_0$ as we pass from $-\infty$ to $\infty$.
We have $\ca_x=y$, and so $\ca_{xx}=y_x=:\al$.
We then compute at $(x,t)$
\beqa
\label{At_eq}
\ca_t & =\int_{-\infty}^x y_t(s)ds\\
&= \int_{-\infty}^x (\arctan(y_x))_x(s)ds\\
&= \arctan \al\\
&= \arctan \ca_{xx}.
\eeqa
Define the Harnack quantity $\h:\R\times (0,\infty)\to \R$ by 
\beqa
\h&:=\ca-2t\arctan\ca_{xx}\\
&=\ca-2t\arctan\al.
\eeqa
By the boundedness of $\arctan$ and continuity of area at $t=0$ (similarly to above)
$\h$ extends continuously to a function $\h:\R\times [0,\infty)\to \R$, and 
$\h(\cdot,0)$ takes values in $[0,A_0]$.
Moreover, we compute for $t>0$ that
\beqa
\h_t&=\ca_t-2\arctan\al-\frac{2t}{1+\al^2}\al_t\\
&=-\arctan\al-\frac{2t\al_t}{1+\al^2}
\eeqa
by \eqref{At_eq}, and
\beqa
\h_{xx}&=\ca_{xx}-2t(\arctan\al)_{xx}\\
&=\al-2t\al_t
\eeqa
by \eqref{alpha_eq}, so
\beql{H_eq}
\h_t-\frac{\h_{xx}}{1+\al^2}=-\arctan\al-\frac{\al}{1+\al^2}\in
\left(-\frac{\pi}{2},\frac{\pi}{2}\right).
\eeq

Note here that the odd function $f(\al):=\arctan\al+\frac{\al}{1+\al^2}$
satisfies
\beqa
f'(\al)&=\frac{1}{1+\al^2}+\frac{1}{1+\al^2}-\frac{2\al^2}{(1+\al^2)^2}\\
&=\frac{2}{(1+\al^2)^2}>0,
\eeqa
so $f$ is maximised as $\al\to\infty$.

Observe that $\h(x,t)$ converges to $0$ as $x\to -\infty$ and converges to $A_0$ as $x\to\infty$, locally uniformly in $t\in [0,\infty)$.
The maximum principle then applies to \eqref{H_eq} to give that at time $t\in [0,\infty)$ we have
\beq
\textstyle \h\in [-\frac{t\pi}{2},A_0+\frac{t\pi}{2}].
\eeq
Unravelling the definition of $\h$, this gives
upper and lower gradient bounds
\beq
\arctan\al(x,t)\leq \frac{\ca(x,t)}{2t}+\frac{\pi}4
\eeq
and 
\beq
\arctan\al(x,t)\geq -\left[\frac{A_0-\ca(x,t)}{2t}+\frac{\pi}4\right].
\eeq
The lower gradient bound could alternatively be derived by applying the upper gradient bound to the solution $(x,t)\mapsto y(-x,t)$.
\end{proof}

\begin{rmk}
\label{even_harnack_rmk}
One consequence of the gradient estimate of Theorem \ref{est_thm_harnack} is that if we have a solution that is even, i.e., $y(x,t)=y(-x,t)$, with $y_x\geq 0$ for negative $x$, 
then because $\ca(x,t)\leq A_0/2$ for $x\leq 0$, we have for $t\geq \tau:=A_0/\pi$ that
$$\arctan\al(x,t)\leq \frac{A_0}{4t}+\frac{\pi}4$$
i.e., we suddenly obtain gradient bounds as we pass beyond
the threshold time $\tau$.
We will use this fact in Section \ref{delay_ex_sect}.
\end{rmk}

Theorem \ref{est_thm_harnack} assumes that the initial data $y_0$ has compact support. 
However, the following lemma invokes earlier work  to remove this restriction.

\begin{lem}
\label{pres_area_lem}
Suppose $y_0:\R\to \R$ is a  locally Lipschitz function.
Let $\vph\in C^\infty(\R,[0,1])$ be a decreasing function with $\vph(x)=1$ for $x\leq 0$ and $\vph(x)=0$ for $x\geq 1$, and use it to define Lipschitz functions
$y_0^i:\R\to\R$ with compact support by 
$$y_0^i(x)=\vph(|x|-i)y_0(x),$$
so that $y_0^i \equiv y_0$ on the ball $B_i(0)$.
Then after passing to a subsequence, 
the corresponding Ecker-Huisken solutions $y^i:\R\times [0,\infty)\to\R$ of \eqref{GCSF} starting with $y_0^i$ converge smoothly locally on 
$\R\times (0,\infty)$, and locally uniformly on $\R\times [0,\infty)$, 
to the Ecker-Huisken solution $y:\R\times [0,\infty)\to\R$ of \eqref{GCSF} starting with $y_0$. 
Moreover, if $y_0\geq 0$ (so $y\geq 0$) and   $y_0\in L^1(\R)$, then 
$\|y(\cdot,t)\|_{L^1(\R)}=\|y_0\|_{L^1(\R)}$ for all $t>0$.
\end{lem}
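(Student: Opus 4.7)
The plan is to realise the lemma as a standard Ecker-Huisken compactness argument, with the area preservation requiring an extra monotonicity trick. First I would note that, on any fixed ball $B_R(0)$, we have $y_0^i \equiv y_0$ as soon as $i \geq R$, so the local Lipschitz constants of the $y_0^i$ on $B_R(0)$ are bounded uniformly in $i$ by that of $y_0$. Ecker-Huisken's interior estimates \cite{EH1, EH2} for \eqref{alpha_eq} together with parabolic Schauder theory then produce uniform $C^k_{loc}$ bounds on each $y^i$ on every compact subset of $\R \times (0, \infty)$, and a diagonal subsequence converges smoothly locally on $\R \times (0, \infty)$ to some smooth solution $y^\infty$ of \eqref{GCSF}.

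To upgrade to locally uniform convergence on $\R \times [0, \infty)$, I would combine the uniform-in-$i$ local Lipschitz bound in $x$ with the standard estimate that $|y^i_t|$ is locally bounded on $\R \times [0,\infty)$ in the setting of locally Lipschitz initial data; this yields a uniform modulus of continuity in $t$ down to $t=0$ on compact $x$-subsets. Since $y_0^i \to y_0$ locally uniformly (trivially, because eventually the two agree), we obtain locally uniform convergence of $y^{i_k}$ on $\R \times [0, \infty)$ with $y^\infty(\cdot, 0) = y_0$. The Chou-Zhu uniqueness result \cite{CZ} for Ecker-Huisken flows starting with locally Lipschitz data attained locally uniformly then identifies $y^\infty$ with the designated $y$, and since this applies to every subsequential limit, the full original sequence converges.

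The remaining area preservation statement is where the main obstacle lies, because one cannot directly control the spatial tail of $y^i(\cdot, t)$ uniformly in $i$: the compact supports of the $y_0^i$ are diverging. I would sidestep this by exploiting monotonicity. Because $\vph$ is decreasing and $y_0 \geq 0$, we have $y_0^i \leq y_0^j \leq y_0$ whenever $i \leq j$, with $y_0^i \uparrow y_0$ pointwise. The comparison principle for graphical curve shortening flow (applicable to each pair, since $y_0^i$ has compact support and $y_0$ is locally Lipschitz) then propagates these inequalities to give $y^i \leq y^j \leq y$ throughout $\R \times [0, \infty)$, so by the convergence already proved, the full sequence satisfies $y^i(x, t) \uparrow y(x, t)$ pointwise. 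The monotone convergence theorem together with Theorem \ref{est_thm_harnack} then yields
$$\|y(\cdot, t)\|_{L^1} = \lim_{i \to \infty} \|y^i(\cdot, t)\|_{L^1} = \lim_{i \to \infty} \|y_0^i\|_{L^1} = \|y_0\|_{L^1},$$
where the last equality is a further application of monotone convergence at $t=0$.
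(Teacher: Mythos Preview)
Your proposal follows essentially the same approach as the paper's proof: Ecker--Huisken interior estimates for compactness, Chou--Zhu uniqueness to identify the limit, and the monotonicity-plus-MCT argument (via Theorem~\ref{est_thm_harnack} and the comparison principle applied to the increasing sequence $y^i$) for area preservation. One minor imprecision: for merely Lipschitz initial data you should not expect $|y^i_t|$ to be bounded down to $t=0$; rather, the Ecker--Huisken local curvature estimate gives $|y^i_t|\leq C t^{-1/2}$ on compact spatial sets, which is integrable in $t$ and yields the required H\"older modulus of continuity down to $t=0$ --- with this correction (and noting that the inequality $y^j\leq y$ you mention is unnecessary, which is good since it would otherwise lean on Corollary~\ref{comp_princ_loc_lip}, itself a consequence of this lemma) your argument matches the paper's.
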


The lemma allows us to extend results we know for compactly supported initial data to general initial data $y_0$, by working with the approximations $y_0^i$ and passing to the limit $i\to\infty$. Two example corollaries are:

\begin{cor}
\label{est_thm_harnack_cor}
Theorem \ref{est_thm_harnack} applies even if we replace the hypotheses that $y_0$ is Lipschitz and of compact support with the hypotheses that $y_0$ is locally Lipschitz
and $y_0\in L^1(\R)$.
\end{cor}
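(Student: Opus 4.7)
The plan is an approximation argument using Lemma \ref{pres_area_lem}. Replace $y_0$ by the compactly supported Lipschitz functions $y_0^i(x):=\vph(|x|-i)y_0(x)$ from that lemma, and let $y^i$ be their Ecker-Huisken flows. Since $y_0\geq 0$ (retained implicitly from the original theorem), each $y_0^i\geq 0$, hence each $y^i\geq 0$ and the limit $y\geq 0$. Setting $A_0^i:=\|y_0^i\|_{L^1}$, the dominated convergence theorem (using $|y_0^i|\leq |y_0|\in L^1$) gives $A_0^i\to A_0$ as $i\to\infty$, while the $L^1$-preservation clause of Lemma \ref{pres_area_lem} yields $\|y(\cdot,t)\|_{L^1}=A_0$ for all $t\geq 0$. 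Consequently the accumulated area $\ca(x,t):=\int_{-\infty}^x y(s,t)\,ds$ is a well-defined function $\R\times[0,\infty)\to[0,A_0]$.

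Next, apply Theorem \ref{est_thm_harnack} to each $y^i$: the accumulated area $\ca^i(x,t):=\int_{-\infty}^x y^i(s,t)\,ds$ lies in $[0,A_0^i]$ and satisfies
\[
\arctan y^i_x(x,t)\leq \frac{\ca^i(x,t)}{2t}+\frac{\pi}{4}.
\]
By Lemma \ref{pres_area_lem}, $y^i\to y$ smoothly locally on $\R\times(0,\infty)$, so for each $t>0$ and $x\in\R$ we have pointwise convergence $y^i_x(x,t)\to y_x(x,t)$. The remaining task is to show $\ca^i(x,t)\to \ca(x,t)$, after which passing to the limit in the displayed inequality yields \eqref{second_arctan_est}.

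The key technical step is thus $L^1$ convergence of $y^i(\cdot,t)$ to $y(\cdot,t)$; pointwise convergence alone would not suffice, since mass might escape to spatial infinity. However, both functions are nonnegative with $\|y^i(\cdot,t)\|_{L^1}=A_0^i\to A_0=\|y(\cdot,t)\|_{L^1}$, and the identity
\[
\int_\R |y^i-y|\,ds \;=\; A_0^i+A_0-2\int_\R \min(y^i,y)\,ds
\]
combined with dominated convergence (using $\min(y^i,y)\leq y\in L^1$) forces $y^i(\cdot,t)\to y(\cdot,t)$ in $L^1(\R)$, which is essentially Scheffé's lemma. In particular $\ca^i(x,t)\to \ca(x,t)$ for every fixed $(x,t)$ with $t>0$, completing the passage to the limit.
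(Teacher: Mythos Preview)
Your argument is correct and follows exactly the approximation scheme the paper intends, filling in the details of the passage to the limit that the paper leaves implicit. One small simplification: since the proof of Lemma~\ref{pres_area_lem} already observes that the $y^i$ are \emph{increasing} in $i$ (by the comparison principle, because the $y_0^i$ increase to $y_0$), you get $\ca^i(x,t)\leq \ca(x,t)$ directly by monotonicity, which is all that is needed on the right-hand side; the full $L^1$ convergence via Scheff\'e's lemma is not required, though your argument for it is valid.
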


\begin{cor}[Comparison principle for locally Lipschitz initial data]
\label{comp_princ_loc_lip}
If $y^+_0$ and $y^-_0$ are locally Lipschitz functions on $\R$ such that 
$y^-_0\leq y^+_0$, then the subsequent Ecker-Huisken solutions $y^-$ and $y^+$ satisfy
$y^-\leq y^+$.
\end{cor}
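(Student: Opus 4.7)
The plan is to reduce to the case of compactly supported initial data via the truncation scheme of Lemma \ref{pres_area_lem} and then pass to the limit. Using the same cutoff $\vph$ from that lemma, define $y_0^{\pm,i}(x):=\vph(|x|-i)\,y_0^\pm(x)$. Because $\vph$ takes values in $[0,1]$ and $y_0^-\leq y_0^+$ pointwise, multiplication by the nonnegative factor $\vph(|x|-i)$ preserves the ordering, so $y_0^{-,i}\leq y_0^{+,i}$ pointwise for every $i$.

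Next I would establish the comparison principle for the compactly supported approximants. Let $y^{\pm,i}:\R\times[0,\infty)\to\R$ denote the Ecker-Huisken flows of \eqref{GCSF} from $y_0^{\pm,i}$ and set $w^i:=y^{+,i}-y^{-,i}$. Subtracting the two equations and expressing $\arctan y_x^{+,i}-\arctan y_x^{-,i}$ as an integral along the segment joining $y_x^{-,i}$ to $y_x^{+,i}$, one sees that $w^i$ satisfies a linear parabolic equation in divergence form,
$$
w^i_t=\partial_x\left(a^i(x,t)\,w^i_x\right),\qquad a^i(x,t):=\int_0^1\frac{ds}{1+(sy_x^{+,i}+(1-s)y_x^{-,i})^2}\in(0,1].
$$
The circle-barrier argument used in the proof of Theorem \ref{est_thm_harnack} shows that each $y^{\pm,i}$ decays quadratically at spatial infinity, uniformly on any time interval $[0,T]$, and hence so does $w^i$. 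Since $w^i(\cdot,0)\geq 0$ by the first paragraph and $w^i$ vanishes at spatial infinity uniformly in $t\in[0,T]$, the weak maximum principle applied to the linear parabolic equation above on $\R\times[0,T]$ yields $w^i\geq 0$, i.e.\ $y^{-,i}\leq y^{+,i}$ on $\R\times[0,\infty)$.

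Finally, applying Lemma \ref{pres_area_lem} to each of the two sequences and extracting a common subsequence by a diagonal argument, I obtain $y^{\pm,i}\to y^\pm$ smoothly locally on $\R\times(0,\infty)$ and locally uniformly on $\R\times[0,\infty)$. The pointwise inequality $y^{-,i}\leq y^{+,i}$ passes to the limit to give $y^-\leq y^+$, as required. The main obstacle is justifying the maximum principle for $w^i$ on the unbounded spatial domain $\R$, which rests on the quadratic spatial decay of compactly supported solutions; this is precisely the ingredient provided by the circle-barrier construction in the proof of Theorem \ref{est_thm_harnack}, and I would simply reuse it rather than try to re-derive it.
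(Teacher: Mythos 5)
Your argument is correct and takes essentially the same route the paper intends: truncate both initial data with the cutoff of Lemma \ref{pres_area_lem}, compare the compactly supported flows, and pass to the smooth local limit provided by (the first part of) that lemma, which does not itself rely on the comparison principle, so there is no circularity. The only difference is that you supply a proof of the compactly-supported base case (writing the difference as a solution of a divergence-form linear parabolic equation and combining the circle-barrier spatial decay with the weak maximum principle), where the paper simply invokes the known comparison theorem; this extra step is sound.
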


\begin{proof}[{Proof of Lemma \ref{pres_area_lem}}]
By the estimates of Ecker-Huisken \cite{EH2} and parabolic regularity theory,
after passing to a subsequence, the flows $y^i$ converge smoothly locally on 
$\R\times (0,\infty)$, and locally uniformly on $\R\times [0,\infty)$ to some new solution on $\R\times [0,\infty)$ starting with $y_0$, which must be $y$ by uniqueness.

In the case that $y_0\geq 0$ and $y_0\in L^1(\R)$, 
the functions $y_0^i$ will be increasing in $i$, 
and the monotone convergence theorem tells us that 
$\|y_0^i\|_{L^1(\R)}\upto \|y_0\|_{L^1(\R)}$ as $i\to\infty$.
By the first part of Theorem \ref{est_thm_harnack}, 
for all $t>0$ the  solutions $y^i$
satisfy 
$$\|y^i(\cdot,t)\|_{L^1(\R)}=\|y_0^i\|_{L^1(\R)}\upto \|y_0\|_{L^1(\R)}$$
as $i\to\infty$.
By the comparison theorem, the  solutions $y^i$ also increase in $i$.
By the monotone convergence theorem their limit $y:\R\times [0,\infty)\to[0,\infty)$
satisfies
$\|y(\cdot,t)\|_{L^1(\R)}= \|y_0\|_{L^1(\R)}$ for each $t>0$
as required.
\end{proof}

\section{Polar graphical flows}
\label{polar_graphical_sect}

Consider the curve shortening flow starting with a line in $\R^2$ that has been bent at one point to have an interior angle 
$\be\in (0,\pi]$. As this is the graph of a Lipschitz function, the original theory of Ecker-Huisken \cite{EH1} tells us that there exists a graphical solution for all time. Chou-Zhu \cite{CZ} tell us that this is the unique graphical solution.
If \( \beta = \pi \) then this is the trivial static line solution, but it will be convenient to include this case.

\begin{figure}
\centering
\begin{tikzpicture}[scale=1.2]

\draw [->] (0,0) -- (2.8*1,2.8*1.732);
\draw [->] (0,0) -- (2.8*2,0);

\draw[very thick, samples=40,domain=0.6:59.4] plot ({cos(\x)/(sin(3*\x))^(0.5)}, {sin(\x)/(sin(3*\x))^(0.5)} );

\draw[-] (0.2,0.2) node[right]{$\be$};
\draw[samples=40,domain=0:60] plot ({cos(\x)/1.5}, {sin(\x)/1.5}); 

\end{tikzpicture}
\caption{Qualitative picture of the $\be$-wedge solution.}
\label{figbwed}
\end{figure}

By uniqueness, the resulting solution must necessarily be self-similar and has been studied by many authors, e.g.~Ishimura \cite{ishimura}. We will call it the $\be$-wedge solution. 
%
Ishimura \cite{ishimura}  tells us that the 
$\be$-wedge solution can be written as a so-called polar graph.
More precisely, instead of considering curve shortening flows of graphs $y(x)$, we can 
write $x+iy=re^{i\th}$ and consider flows of polar graphs $r(\th)$. 
For $t>0$, the $\be$-wedge solution will take the form
$$r_\be(\th,t)=t^\half R_\be(\th)$$
where $R_\be:(0,\beta)\to (0,\infty)$
is a convex function that 
satisfies 
$R_\be(\th)\to\infty$ both as $\th\downto 0$ and as $\th\upto \be$, cf.~Figures \ref{RAW_fig} and \ref{figbwed}.

Denote by $\cv_\be(\th_0,\th_1)$, for $0<\th_0<\th_1<\be$, the area of 
$$\{re^{i\th}\ :\ \th\in (\th_0,\th_1),\ 0<r<R_\be(\th)\},$$
that is, 
$$\cv_\be(\th_0,\th_1)=\int_{\th_0}^{\th_1}\frac{R_\be(\th)^2}{2}d\th,$$
and denote by
\beql{zz}
\cv_\be(\th_0,\th_1,t)=t\cv_\be(\th_0,\th_1)
\eeq
the corresponding area for $r_\be(\cdot,t)$ rather than $R_\be$.
We will derive two formulae for $\frac{d}{dt}\cv_\be(\th_0,\th_1,t)$ and equate them. Directly from 
\eqref{zz} we have 
\beql{formula_1}
\frac{d}{dt}\cv_\be(\th_0,\th_1,t)=\cv_\be(\th_0,\th_1).
\eeq
If we write the polar graph of $R_\be$ instead as the image of a unit-speed parametrisation 
$\ga_\be:\R\to\R^2$, with $\ga_\be(s)$ limiting to the lines $\th=0$ and $\th=\be$  as $s\to-\infty$ and $s\to\infty$, respectively, 
and use the fact that the curve shortening flow moves the curve in the direction of the geodesic curvature $-\ka_\be \nu$, for $\nu$ the unit normal that points more away from the origin than towards it, then we have
\beql{vol_ka}
\left.\frac{d}{dt}\cv_\be(\th_0,\th_1,t)\right|_{t=1}=-\int_{s(\th_0)}^{s(\th_1)} \ka_\be(s)ds.
\eeq
Our sign convention for $\ka_\be$ means that 
if we take any choice $\psi_\be:\R\to\R$  of the angle 
the tangent $\partial_s \gamma_\be$ makes to a fixed reference direction\footnote{
For example, $\partial_s \gamma_\be(s)=(\cos\psi_\be(s),\sin\psi_\be(s))$ when measuring \( \psi_\be \) from the \( x \)-axis.}, then 
$\ka_\be(s)=\partial_s \psi_\be(s)$. In particular, this can be integrated in \eqref{vol_ka} to give 
$$\left.\frac{d}{dt}\cv_\be(\th_0,\th_1,t)\right|_{t=1}=-(\psi_\be(s(\th_1))-\psi_\be(s(\th_0))).$$
Abbreviating 
\beql{PsiBetaDef}
\Psi_\be(\th_0, \th_1):=\psi_\be(s(\th_0)) - \psi_\be(s(\th_1)), 
\eeq
which geometrically is the angle through which  the tangent turns  when moving 
from \( \th_1 \) to \( \th_0 \), 
and combining with \eqref{formula_1}, we find that
$$\cv_\be(\th_0,\th_1)=\Psi_\be(\th_0, \th_1).$$%

We have proved the following elementary identity.
%
%
%
\begin{lem}
\label{zero_Harnack_lem}
With the notation above, the \( \be \)-wedge solution of curve shortening flow satisfies 
$$\h_\be(\th_0,\th_1):=\cv_\be(\th_0,\th_1)-\Psi_\be(\th_0, \th_1)\equiv 0.$$
\end{lem}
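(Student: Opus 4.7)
The plan is to verify the identity $\mathcal{V}_\beta(\theta_0,\theta_1)=\Psi_\beta(\theta_0,\theta_1)$ directly, since $\mathcal{H}_\beta\equiv 0$ is just the restatement of this equality. Essentially everything needed has already been set up in the paragraphs preceding the lemma, so the task is to organise those observations into a self-contained argument. The key device is to compute the time derivative of the enclosed area $\mathcal{V}_\beta(\theta_0,\theta_1,t)$ in two different ways and equate them at $t=1$.

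For the first computation, I would use the self-similarity $r_\beta(\theta,t)=t^{1/2}R_\beta(\theta)$, which immediately yields $\mathcal{V}_\beta(\theta_0,\theta_1,t)=t\,\mathcal{V}_\beta(\theta_0,\theta_1)$, and hence
\[
\frac{d}{dt}\mathcal{V}_\beta(\theta_0,\theta_1,t)\equiv \mathcal{V}_\beta(\theta_0,\theta_1).
\]
For the second, I would appeal to the general first variation formula for enclosed area under a normal flow: if the moving piece of the boundary has inward normal velocity $\kappa_\beta$ (the sign follows from the convention that the velocity vector is $-\kappa_\beta\nu$ with $\nu$ pointing away from the origin), then
\[
\frac{d}{dt}\mathcal{V}_\beta(\theta_0,\theta_1,t)\bigg|_{t=1}=-\int_{s(\theta_0)}^{s(\theta_1)}\kappa_\beta(s)\,ds.
\]
Using $\kappa_\beta=\partial_s\psi_\beta$ this telescopes to $-(\psi_\beta(s(\theta_1))-\psi_\beta(s(\theta_0)))=\Psi_\beta(\theta_0,\theta_1)$. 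Equating the two expressions gives $\mathcal{V}_\beta(\theta_0,\theta_1)=\Psi_\beta(\theta_0,\theta_1)$, which is the lemma.

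The one point that requires a little care — and which I expect to be the main (mild) obstacle — is the justification of the first variation formula when the evolving arc terminates at points sliding along the fixed rays $\theta=\theta_0$ and $\theta=\theta_1$. In the standard closed-curve setting there are no boundary contributions, but here the endpoints of the relevant arc of $\gamma_\beta$ move in time. The cleanest way around this is to bypass the variation formula altogether and work with the polar form directly: write
\[
\mathcal{V}_\beta(\theta_0,\theta_1,t)=\int_{\theta_0}^{\theta_1}\frac{r_\beta(\theta,t)^2}{2}\,d\theta,
\]
differentiate under the integral (the limits are fixed in $\theta$), and convert $r\,r_t$ into an expression involving $\kappa_\beta$ using the graphical form of curve shortening flow together with the identity $\kappa_\beta\,ds=d\psi_\beta$. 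Alternatively, one argues that sliding of the endpoint along a fixed radial ray does not contribute to the area because the ray passes through the origin, so any infinitesimal sliding sweeps a degenerate triangle of area $o(1)$ relative to the motion of the interior arc. Either variant produces the required identity, completing the proof.
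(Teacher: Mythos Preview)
Your proposal is correct and follows exactly the paper's own argument: compute $\frac{d}{dt}\mathcal{V}_\beta(\theta_0,\theta_1,t)$ once via self-similarity and once via the curvature integral $-\int\kappa_\beta\,ds=\Psi_\beta$, then equate. The paper does not dwell on the endpoint issue you raise, but your observation that sliding along a ray through the origin sweeps no area is the right justification.
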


Taking the limit $\th_0\downto 0$ and as \( \th_1 \upto\be\)  
gives (with slight abuse of notation)
\beq
\label{vol_ss}
\cv_\be(0,\be) = \pi-\be.
\eeq


Although Lemma \ref{zero_Harnack_lem} is all that we will require of polar graphical flows in this paper, it is the sharp instance of a Harnack inequality that 
was given in the original preprint of this paper.
That topic will now be discussed in \cite{ST3}.

\section{The right-angled wedge}
\label{RAW_sect}

In this section we continue  the discussion of the self-similar solutions from the previous section in the case that $\beta=\frac{\pi}2$. Instead of viewing the solution as a polar graph, we can then view it as a standard graph. The  polar graph of $R_{\frac\pi{2}}$ becomes the graph of $W$, as discussed in Section \ref{intro}.
Recall that the graph of $W$ was illustrated in Figure \ref{RAW_fig}.

\begin{lem}[Right-angled wedge]
\label{RAW_lem}
There exists a monotonically decreasing convex diffeomorphism $W:(0,\infty)\to(0,\infty)$ with the properties that 
\begin{enumerate}
\item
\label{P1}
$y(x,t)=\w(x,t):=t^{\half}W(xt^{-\half})$ solves the graphical curve shortening flow equation \eqref{GCSF},
\item
\label{P2}
$W^{-1}=W$,
\item 
\label{P3}
$\|W\|_{L^1}=\frac{\pi}{2}$,
\item
\label{W_prime_area_item}
$-\arctan W'(x)=\ca_0(x)$, where $\ca_0:(0,\infty)\to (0,\frac{\pi}2)$ is defined 
in \eqref{ca0_def}.
\item
\label{W_decay_item}
The function $W$ decays rapidly in the sense that for $0<x<s$, we have 
\beql{W_decay}
W(s)\leq W(x)e^{(x^2-s^2)/4}.
\eeq
\item
\label{tail_item}
The tail area \eqref{si_def} decays rapidly in the sense that 
$$\si(x) 
\leq \frac{2W(x)}{x},$$
for $x>0$.
\item
\label{W_Wp_item}
As $x\to\infty$ we have
$$-W'(x)= (1+o(1))\frac{xW(x)}2.$$
\item
\label{W_lower_item_new}
As $x\to\infty$ we have
$$W(x)= \big(1+o(1)\big)2d e^{d^2/4}\frac{e^{-x^2/4}}{x^2},$$
where the distance $d$ from the origin to the graph of $W$ can be estimated by 
$d^2\leq \frac{\pi}{2}$.
\end{enumerate}
\end{lem}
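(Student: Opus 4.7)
The plan is to cascade through the eight properties, reducing to the self-similar ODE and then bootstrapping. Properties (\ref{P1})--(\ref{P3}) follow directly from Section \ref{polar_graphical_sect}: specialising the $\beta$-wedge theory at $\beta=\pi/2$, the resulting self-similar curve in the first quadrant is convex and monotonically decreasing, hence the graph of some $W:(0,\infty)\to(0,\infty)$ with $y(x,t)=t^{1/2}W(xt^{-1/2})$ solving \eqref{GCSF}; this is (\ref{P1}). For (\ref{P2}), the initial right-angled wedge is invariant under reflection across $y=x$, so by the Chou--Zhu \cite{CZ} uniqueness of the graphical solution the flow preserves this symmetry, giving $W=W^{-1}$. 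Property (\ref{P3}) is the area identity $\cv_\beta(0,\beta)=\pi-\beta$ at $\beta=\pi/2$.

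For (\ref{W_prime_area_item}), the self-similar ansatz reduces \eqref{GCSF} to the profile ODE $W''/(1+(W')^2)=(W-xW')/2$. Differentiating $-\arctan W'(x)$ using this equation gives $(xW'-W)/2$, which equals $\ca_0'(x)$ by \eqref{various_ids}; the two functions therefore differ by a constant, and matching at $x\downto 0$ (where $W'\to-\infty$ and so $-\arctan W'\to\pi/2$, while $xW(x)\to 0$ by monotonicity and $W\in L^1$, and $\sigma(x)\to\pi/2$, forcing $\ca_0(x)\to\pi/2$) fixes this constant to zero. For (\ref{W_decay_item}), property (\ref{W_prime_area_item}) together with $\sigma\geq 0$ and the elementary inequality $\tan a\geq a$ on $[0,\pi/2)$ yields
\[
-W'(x)=\tan\ca_0(x)\geq\ca_0(x)\geq\tfrac{1}{2}xW(x),
\]
so $(\log W)'(x)\leq -x/2$, and integrating from $x$ to $s$ gives \eqref{W_decay}. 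Property (\ref{tail_item}) follows by substituting \eqref{W_decay} into the definition of $\sigma(x)$ and applying the standard Gaussian tail bound $\int_x^\infty e^{-s^2/4}\,ds\leq (2/x)e^{-x^2/4}$, which itself is trivial from $s/x\geq 1$ on $(x,\infty)$.

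Property (\ref{W_Wp_item}) is a direct consequence of (\ref{tail_item}): since $\ca_0(x)=\tfrac{1}{2}xW(x)+\sigma(x)=\tfrac{1}{2}xW(x)(1+O(1/x^2))$ and $\ca_0(x)\to 0$ as $x\to\infty$, we have $\tan\ca_0(x)=\ca_0(x)(1+o(1))$, and so $-W'(x)=\tfrac{1}{2}xW(x)(1+o(1))$. The subtlest step is (\ref{W_lower_item_new}). Bootstrapping with a refined Laplace expansion of $\sigma(x)$ (valid now that $W$ is known to have the form $h(x)e^{-x^2/4}$ with polynomially decaying $h$) gives
\[
-\frac{W'(x)}{W(x)}=\frac{x}{2}+\frac{2}{x}+O(1/x^3),
\]
which integrates to $W(x)=c\,x^{-2}e^{-x^2/4}(1+o(1))$ for some constant $c>0$. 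I expect the main obstacle to be identifying $c=2de^{d^2/4}$, since this requires matching the subleading expansion to a global geometric quantity via the closest-point structure of the profile ODE rather than just extracting the leading exponential rate. The bound $d^2\leq\pi/2$ itself should then follow by evaluating $\ca_0$ at the symmetric fixed point $x_0=W(x_0)$: differentiating $W\circ W=\mathrm{id}$ at $x_0$ and using $W'<0$ gives $W'(x_0)=-1$, so (\ref{W_prime_area_item}) yields $\ca_0(x_0)=\pi/4$, hence $\tfrac{1}{2}x_0^2+\sigma(x_0)=\pi/4$, and $\sigma\geq 0$ gives $x_0^2\leq\pi/2$, from which the claimed estimate on $d$ follows.
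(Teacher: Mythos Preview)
Your treatment of Parts (\ref{P1})--(\ref{W_Wp_item}) is correct and essentially matches the paper, with only cosmetic differences: you derive (\ref{W_prime_area_item}) by computing the profile ODE and matching constants at $x\downto 0$, whereas the paper reads it off directly from Lemma \ref{zero_Harnack_lem}; your Gaussian tail bound in (\ref{tail_item}) is a reparametrisation of the paper's substitution $s=x+r$.

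The gaps are both in Part (\ref{W_lower_item_new}). First, you correctly flag that identifying $c=2de^{d^2/4}$ is the obstacle, but you do not resolve it: a bootstrap on $-W'/W$ can indeed be pushed to give $W(x)=(1+o(1))c\,x^{-2}e^{-x^2/4}$ for \emph{some} $c>0$, but the constant of integration carries no visible link to $d$. The paper closes this via a first integral of the profile ODE,
\[
\log\bigl(1+(W')^2\bigr)=\tfrac{x^2+W^2}{2}+\log(-xW'+W)^2+C,
\]
which can be evaluated at the symmetric point (where $W=x$ and $W'=-1$) to pin down $e^{-C}=d^2e^{d^2/2}$, and then read off as $x\to\infty$ to obtain the precise coefficient. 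This conserved quantity is the missing idea.

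Second, your bound on $d$ is off by a factor of two. You correctly obtain $\ca_0(x_0)=\pi/4$ at the fixed point $x_0=W(x_0)$, hence $\tfrac12 x_0^2+\si(x_0)=\pi/4$ and $x_0^2\leq\pi/2$. But $d=\sqrt{2}\,x_0$, so this yields only $d^2\leq\pi$. The paper instead observes that the tangent line at $(x_0,x_0)$ has slope $-1$ and so meets the axes at $(2x_0,0)$ and $(0,2x_0)$; by convexity the triangle with these vertices and the origin lies under the graph of $W$, and its area $\tfrac12(2x_0)^2=2x_0^2=d^2$ is therefore at most $\|W\|_{L^1}=\pi/2$.
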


\begin{proof}
Parts \ref{P1}, \ref{P2}, \ref{P3} and \ref{W_prime_area_item} follow from the discussion in Section \ref{polar_graphical_sect}.
In particular, Part \ref{P3} follows from \eqref{vol_ss} and 
Part \ref{W_prime_area_item} follows from Lemma \ref{zero_Harnack_lem} because
$0=\h_{\frac{\pi}{2}}(0,\th)=\ca_0(x)+\arctan W'(x)$, where 
$x=R_{\frac\pi{2}}(\th)\cos\th$.
For Part \ref{W_decay_item}, we use Part \ref{W_prime_area_item} to compute
\beql{for_upper}
-W'(x) \geq \arctan(-W'(x)) = \ca_0(x) = \half xW(x)+\si(x)\geq \half xW(x),
\eeq
or equivalently 
$$(-\log W)'(x)\geq \frac{x}2,$$
which can be integrated to give \eqref{W_decay}.

We use Part \ref{W_decay_item} to prove Part \ref{tail_item}.
We find that
$$\si(x)=\int_x^\infty W(s)ds\leq W(x) \int_x^\infty e^{(x^2-s^2)/4}ds.$$
By changing variables from $s$ to $r$, where $s=x+r$, 
we compute that 
$$\int_x^\infty e^{(x^2-s^2)/4}ds=\int_0^\infty e^{(-rx/2-r^2/4)} dr \leq  \int_0^\infty e^{-rx/2} dr  = \frac{2}{x}, $$
and so 
$$\si(x)\leq \frac{2W(x)}{x}$$
as required.

For Part \ref{W_Wp_item}, we note that by the definition of $\ca_0(x)$ from \eqref{ca0_def}, and by Part \ref{tail_item} we have
$$\frac{xW(x)}2\leq \ca_0(x)\leq W(x)\left[\frac{x}{2}+\frac{2}{x}\right].$$
Therefore Part \ref{W_prime_area_item} tells us not only that $W'(x)\to 0$ as $x\to\infty$ but also that
$$-W'(x)=(1+o(1))(-\arctan W'(x))=(1+o(1))\ca_0(x)=(1+o(1))\frac{xW(x)}2$$
as required.

For Part \ref{W_lower_item_new}, we start by differentiating 
Part \ref{W_prime_area_item}, using \eqref{various_ids}, to give 
\beql{part8_preamble}
\frac{2W''(x)}{1+(W'(x))^2}=-xW'(x)+W(x).
\eeq
Then we can verify simply by differentiating that
\beql{logW}
\log(1+(W')^2)=\frac{x^2+W^2}2 +\log(-xW'+W)^2+C
\eeq
for some constant $C$.\footnote{The relevant computations are:
$$\frac{d}{dx}\log(-xW'+W)^2= \frac{-2xW''}{-xW'+W}\stackrel{\eqref{part8_preamble}}{=}-x(1+(W')^2)$$
$$\frac{d}{dx}\log(1+(W')^2)=\frac{2W'W''}{1+(W')^2}\stackrel{\eqref{part8_preamble}}{=}-x(W')^2+W W'$$
$$\frac{d}{dx}\left(\frac{x^2+W^2}2\right)=x+W W'$$
}
Note that a first integral for self-similar solutions can also be found in 
\cite{halldorsson}.

\begin{figure}
\centering
\begin{tikzpicture}[scale=1.2]

\draw [->] (0,0) -- (5,0); 
\draw [->] (0,0) -- (0,5) node[left]{$W(x)$};

\draw[very thick, domain=1:5, samples=50] 
plot (\x, {
(1/(\x))
});
\draw[very thick, domain=1:5, samples=50] 
plot ( {(1/(\x))},\x);

\draw (0,0) -- (0.6,0.6) node[above left]{$d$} -- (1,1);
\draw [dotted] (1,1) -- (1,0) node[below]{$x$};

\end{tikzpicture}

\caption{Definition of $d$.}
\label{x_d_fig}
\end{figure}

If we let $d$ be the shortest distance from origin to curve, and consider the 
value $x$ so $x^2=\half d^2$, so $W(x)=x$ (see Figure \ref{x_d_fig}) then we find 
from \eqref{logW} that
$$\log 2 = \frac{d^2}2 +\log (2d^2) +C,$$
so $e^{-C}=d^2e^{d^2/2}$.
Exponentiating \eqref{logW} then gives
\beq
d^2e^{d^2/2}(1+(W')^2)=(-xW'+W)^2e^{\frac{x^2+W^2}2}.
\eeq
Using that $W'(x)=o(1)$ and  $W(x)=o(1)$ as $x\to\infty$ then gives
$$(-xW'+W)=d e^{d^2/4} e^{-\frac{x^2}4}  (1+o(1)).$$
By Part \ref{W_Wp_item}, $\frac{W}{-xW'}=o(1)$ as $x\to\infty$, so
\beqa 
(-x^2W)' & =x(-xW'-2W)\\
&=x(-xW'+W)(1+o(1))\\
&=d e^{d^2/4} xe^{-\frac{x^2}4}  (1+o(1))\\
&=-2d e^{d^2/4} \left[e^{-\frac{x^2}4}\right]'  (1+o(1)).
\eeqa
Because $x^2W(x)=o(1)$ as $x\to\infty$, we can integrate from $x$ to infinity to give
$$x^2W(x)=2d e^{d^2/4} e^{-\frac{x^2}4} (1+o(1))$$
as $x\to\infty$, as required.
To estimate $d$ we observe that the area of the biggest triangle one can squeeze under the graph of $W$ is $d^2$, as in Figure \ref{d2_fig}. Therefore $d^2\leq \frac{\pi}{2}$ by Part \ref{P3}.
\end{proof}

\begin{figure}
\centering
\begin{tikzpicture}[scale=1.2]

\draw [fill, blue!8] (0,2) -- (2,0) -- (0,0);

\draw [->] (0,0) -- (5,0); 
\draw [->] (0,0) -- (0,5) node[left]{$W$};

\draw[very thick, domain=1:5, samples=50] 
plot (\x, {
(1/(\x))
});
\draw[very thick, domain=1:5, samples=50] 
plot ( {(1/(\x))},\x);

\draw (0,2) -- (2,0) node[below]{$2x$};
\draw [dotted] (1,1) -- (1,0) node[below]{$x$};

\path (0.6,0.6) node{$d^2$};

\end{tikzpicture}

\caption{Area of the shaded triangle is $\half(2x)^2=2x^2=d^2$.}
\label{d2_fig}
\end{figure}

One simple consequence of our estimates on $W$ 
is that an Ecker-Huisken solution starting with a Lipschitz function of compact support must decay rapidly to zero as $x\to\infty$ and $x\to-\infty$, and in particular must remain in $L^1$. We can translate the solution to the left so that the initial support is in $\{x<0\}$, and apply the following.

\begin{lem}
\label{rapid_decay_cpt_spt_lem}
Suppose $y: [0,\infty)\times(0,T)\to\R$ satisfies \eqref{GCSF} and suppose that 
$y(\cdot,t)\to 0$ locally uniformly 
as $t\downto 0$.
Then 
\beql{yWexp}
|y(x,t)|\leq \w(x,t)\leq C\frac{t^{\frac{3}{2}}}{x^2} e^{-\frac{x^2}{4t}}
\eeq
for some universal constant $C<\infty$, all $t\in (0,T)$, and all $x\geq t^\half$.
\end{lem}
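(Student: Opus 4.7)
By the symmetry $y \mapsto -y$ of the graphical curve shortening flow equation \eqref{GCSF}, it suffices to prove $y(x,t) \leq \w(x,t)$ for $x > 0$ and $t \in (0,T)$; the second inequality in \eqref{yWexp} is then a separate explicit estimate on $\w$. I would prove the first inequality by comparison with a family of shifted, vertically translated copies of $\w$, and the second via the asymptotic decay of $W$ furnished by Lemma \ref{RAW_lem} Part \ref{W_lower_item_new}.

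For the comparison, fix small parameters $s > 0$ and $\varepsilon > 0$ and introduce the barrier
$$\w^{s,\varepsilon}(x,t) := \w(x - s, t) + \varepsilon,$$
which solves \eqref{GCSF} on $(s,\infty) \times (0,T)$ because \eqref{GCSF} is invariant under horizontal shifts and vertical translations. On a truncated parabolic box $(s,R) \times (0,T')$ with $T' < T$ and $R$ large, I would verify the ordering $y \leq \w^{s,\varepsilon}$ on the parabolic boundary: at $x = s^+$ the barrier diverges to $+\infty$ while $y$ is locally bounded; at $t \downto 0$ the hypothesis gives $y \to 0$ uniformly on the compact set $[s,R] \subset [0,\infty)$ whereas $\w^{s,\varepsilon}(x,t) \to \varepsilon > 0$ uniformly there, so $y < \varepsilon \leq \w^{s,\varepsilon}$ for $t$ sufficiently small; at $x = R$ one requires $y(R,t) < \varepsilon$ uniformly in $t \in [0,T']$. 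The difference $u := y - \w^{s,\varepsilon}$ satisfies a linear parabolic equation $u_t = (a u_x)_x$ with positive coefficient
$$a = \int_0^1 \frac{d\theta}{1 + (\theta y_x + (1-\theta)\w^{s,\varepsilon}_x)^2}$$
obtained by writing $\arctan(y_x) - \arctan(\w^{s,\varepsilon}_x) = a(y_x - \w^{s,\varepsilon}_x)$, so the standard parabolic comparison principle applies and gives $y \leq \w^{s,\varepsilon}$ on the interior. Sending $R \to \infty$, then $T' \upto T$, and finally $s, \varepsilon \downto 0$ delivers $y \leq \w$ on $(0,\infty) \times (0,T)$.

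The main obstacle lies in the boundary inequality at $x = R$, which requires a preliminary decay estimate for $y$ at spatial infinity. To obtain this I would rerun the shrinking-circle barrier construction already employed in the proof of Theorem \ref{est_thm_harnack}: placing large circles above the $x$-axis, tangent at $(x,0)$ with radius of order $x^2$, and letting them evolve under curve shortening flow yields a preliminary estimate of the form $|y(x,t)| \leq C/x^2$ for $|x|$ sufficiently large and $t \in [0,T']$, with $C$ and the threshold depending only on the initial data and $T'$. With this weaker decay in hand, $R$ can be chosen large enough that $C/R^2 < \varepsilon$, closing the comparison argument.

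Finally, for the explicit second inequality in \eqref{yWexp}, I would use the self-similar form $\w(x,t) = t^{1/2} W(x t^{-1/2})$, which reduces the claim to showing $W(s) \leq C e^{-s^2/4}/s^2$ for all $s \geq 1$ with a universal $C$. By Lemma \ref{RAW_lem} Part \ref{W_lower_item_new},
$$W(s) = (1 + o(1)) \cdot 2d \, e^{d^2/4}\, \frac{e^{-s^2/4}}{s^2} \qquad \text{as } s \to \infty,$$
with $d^2 \leq \pi/2$ universal, so the bound holds for $s \geq s_0$ with a universal constant. For $s \in [1, s_0]$ the bound follows by continuity of $W$ and the positive lower bound of $e^{-s^2/4}/s^2$ on this compact interval, at the cost of enlarging the constant. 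Substituting $s = x t^{-1/2}$ with $x \geq t^{1/2}$ yields the stated estimate $\w(x,t) \leq C t^{3/2} e^{-x^2/(4t)}/x^2$.
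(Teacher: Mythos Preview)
Your strategy matches the paper's: a preliminary circle-barrier argument to get spatial control, then comparison with a vertically shifted copy of $\w$, then the explicit bound via Part~\ref{W_lower_item_new} of Lemma~\ref{RAW_lem}. The horizontal shift $s$ and the finite box $(s,R)$ are harmless extra bookkeeping; the paper works directly on the half-line.

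There is one genuine gap. The circle construction you invoke from Theorem~\ref{est_thm_harnack} uses circles tangent at $(x,0)$ with radius $r=x^2/(3\bar y)$, where $\bar y=\max y_0$ and $y_0$ has compact support. In the present lemma there is \emph{no} initial data --- the solution lives only on $\R\times(0,T)$ --- and the hypothesis says nothing about $y$ in the region $\{x<0\}$. But for large $x$ those circles have $r\gg x$ and extend far into $\{x<0\}$, so you cannot arrange the graph of $y(\cdot,t)$ to lie below them and the comparison cannot be set up. Your phrase ``with $C$ and the threshold depending only on the initial data'' is the tell: there is no initial data to depend on.

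The paper fixes this by using smaller circles that stay inside $[0,\infty)$: for each $x_1>0$ take the circle of radius $x_1$ centred at $(x_1,x_1+\epsilon)$, which spans exactly $[0,2x_1]$. The local-uniform-convergence hypothesis on $[0,\infty)$ then puts the graph of $y(\cdot,t)$ below this circle for small $t$, and comparison with the shrinking circle yields $y(x_1,t)\le x_1+\epsilon-\sqrt{x_1^2-2t}$, hence $y(x_1,t)\le x_1-\sqrt{x_1^2-2t}$ after $\epsilon\downto 0$, valid for all $t<x_1^2/2$. This single estimate already gives both uniform convergence $y(\cdot,t)\to 0$ on $[0,\infty)$ as $t\downto 0$ and uniform decay as $x_1\to\infty$, which is exactly what your right-boundary check at $x=R$ needs. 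With uniform convergence on the whole half-line in hand, the paper then simply compares $y$ with $\w+\epsilon$ on $(0,\infty)$ and sends $\epsilon\downto 0$, bypassing the horizontal shift and the finite box altogether.
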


\begin{proof}
We claim that the hypothesis of local uniform convergence $y(\cdot,t)\to 0$ 
as $t\downto 0$ will imply uniform convergence 
and decay of $y(x,t)$ to zero as $x\to\infty$ that is uniform for $t\in(0,T)$.
For example, to obtain uniform \textit{upper} control, observe that
for every $x_1>0$ and $\ep>0$, we have $y(x,t)<\ep$ for all $x\in [0,2x_1]$ 
and sufficiently small $t>0$ by assumption. Therefore we can compare the solution $y$ to 
the shrinking circle starting with the  circle of radius $x_1$ centred at $(x_1,x_1+\ep)$,
which has radius $\sqrt{x_1^2-2t}$,
to establish that $y(x_1,t)\leq x_1+\ep-\sqrt{x_1^2-2t}$ for all $t\in (0,\min\{T,\half x_1^2\})$. 
Taking the limit $\ep\downto 0$ gives the uniform estimate
$$y(x_1,t)\leq x_1-\sqrt{x_1^2-2t}$$
for all $t\in (0,\min\{T,\half x_1^2\})$ and all $x_1>0$,
which implies both the uniform convergence as $t\downto 0$ and the uniform spatial decay.

It follows from Part \ref{W_lower_item_new} of Lemma \ref{RAW_lem} that there exists a universal constant $C<\infty$ 
such that 
$$W(x)\leq C \frac{e^{-\frac{x^2}{4}}}{x^2} \qquad \text{ for } x\geq 1.$$
Since $\w(x,t):=t^{\half}W(xt^{-\half})$, by definition, this implies
$$\w(x,t)\leq Ct^\frac{3}{2} \frac{e^{-\frac{x^2}{4t}}}{x^2} 
\qquad \text{ for } x\geq t^\half,$$
that is, the second inequality of \eqref{yWexp}.
For every $\ep>0$ and sufficiently small $t>0$, depending on $\ep$, we have 
$y(x,t)<\ep$ for $x\geq 0$ by the uniform convergence just established. Therefore we can invoke the comparison principle to establish that $y(x,t)\leq \w(x,t)+\ep$ for all $t\in (0,T)$  and all $x>0$. 
Taking the limit $\ep\downto 0$ gives $y(x,t)\leq \w(x,t)$, and repeating the argument for the solution $-y(x,t)$ then gives the first inequality of \eqref{yWexp}.
\end{proof}

\section{Crossing arguments}
\label{crossing_arg_sect}

In this section we use crossing arguments to prove 
Theorem \ref{ES_improvement_thm}, showing that the size of the gradient $|y_x|$ can be controlled in a sharp way in terms of the height $y$ and the time $t$, and its consequence Corollary \ref{ES_imp_cor}. 
We will also prove our delayed height estimate, Theorem \ref{refine_thm1},
and its consequence Corollary \ref{refine_cor1}.

Angenent \cite{angenent1991parabolic}  developed a principle that the number of crossing points of two solutions to curve shortening flow will be a monotonically decreasing function of time, under reasonable hypotheses.
We use this principle to prove the following.

\begin{lem}
\label{y_W_lem}
Suppose that $y:\R\times (0,\infty)\to (0,\infty)$ is a positive smooth solution to \eqref{GCSF}. 
Then for all $t>0$ if we define 
\beql{x_t_def}
x_t:=\w(y(0,t),t)=t^\half W\left(y(0,t)t^{-\half}\right)>0,
\eeq
which makes 
$$y(0,t)=\w(x_t,t):=t^{\half}W\left(x_t t^{-\half}\right),$$
then 
$$y(x,t)-\w(x+x_t,t)\leq 0\qquad\text{ for }x\in (-x_t,0),$$
and 
$$y(x,t)-\w(x+x_t,t)\geq 0\qquad\text{ for }x > 0.$$
\end{lem}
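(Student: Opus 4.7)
The plan is to apply Angenent's intersection-number principle to the difference between $y$ and a translated right-angled wedge. Fix $t_0 > 0$ and set $a := x_{t_0} > 0$. The translated wedge $u_a(x, t) := \w(x+a, t) = \sqrt{t}\,W((x+a)/\sqrt{t})$ is a smooth solution of \eqref{GCSF} on $\{(x, t) : x > -a, t > 0\}$, and by the definition of $x_{t_0}$ we have $u_a(0, t_0) = y(0, t_0)$, so the two solutions meet at the point $(0, t_0)$. Subtracting the two instances of \eqref{GCSF} and applying the mean value theorem to $p \mapsto \arctan p$ shows that $\phi(x, t) := y(x, t) - u_a(x, t)$ satisfies a linear parabolic equation of the form $\phi_t = A(x, t) \phi_{xx} + B(x, t) \phi_x$ with $A > 0$. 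Angenent's theorem then gives that the zero count $Z(t) := \#\{x \in (-a, \infty) : \phi(x, t) = 0\}$ is a non-increasing function of $t$, with a zero of multiplicity at least $2$ forcing a strict decrease.

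Next I would analyse the boundary behaviour of $\phi(\cdot, t)$. As $x \downto -a$, $u_a(x, t) \to \infty$ while $y$ stays bounded on compacts by smoothness, so $\phi \to -\infty$; in particular $\phi < 0$ in a right-neighbourhood of $-a$ at every $t > 0$. As $x \to +\infty$, $u_a$ decays at a Gaussian rate by Lemma \ref{RAW_lem}(\ref{W_lower_item_new}), while $y > 0$ by hypothesis, so $\phi > 0$ for large $x$---at least once one rules out anomalously fast decay of $y$ via an auxiliary comparison.

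The crux is to show $Z(t_0) = 1$: since $\phi(0, t_0) = 0$ by construction and $\phi < 0$ near $x = -a$, the unique sign change then sits at $x = 0$, yielding $\phi < 0$ on $(-a, 0)$ and $\phi > 0$ on $(0, \infty)$, which is exactly the conclusion. I would show $Z(t_0) \leq 1$ by sweeping the wedge parameter $a'$ through $x_{t_0}$: for $a' > x_{t_0}$ we have $u_{a'}(0, t_0) < y(0, t_0)$ (since $u_{a'}$ is strictly decreasing in $a'$ by $W' < 0$), so $\phi_{a'}(0, t_0) > 0$, and for $a' < x_{t_0}$ the reverse. For $a'$ very large the wedge $u_{a'}$ is essentially negligible on bounded regions and $\phi_{a'}$ clearly has a single sign change near $x = -a'$. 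Continuously decreasing $a'$ down to $x_{t_0}$, combined with the $t$-monotonicity of $Z$ for each fixed $a'$, forces the zero count to stay equal to one, with the lone zero transported continuously through $x = 0$ as $a'$ passes through $x_{t_0}$.

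The main obstacle is the rigorous implementation of this combined $(a', t)$-deformation. One must track how the location of the single zero depends on $a'$, ensure no extra intersections appear through tangencies (which Angenent's strict-drop assertion would conflict with), and correctly handle the $x \to +\infty$ asymptotics where the Gaussian decay of $u_a$ from Lemma \ref{RAW_lem}(\ref{W_lower_item_new}) dominates essentially any decay of $y$. Ruling out a non-transverse zero of $\phi_a$ at $(0, t_0)$ is delicate: one approach is to observe that a tangential zero would persist under small perturbations in $a'$ and thereby force $Z(t_0) \geq 2$ for all $a'$ close to $x_{t_0}$, contradicting the single-sign-change structure established for extreme values of $a'$.
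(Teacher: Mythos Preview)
Your approach has a genuine gap in the step where you claim that for $a'$ very large, the zero count $Z(t_0)$ equals one. At fixed time $t_0$, the translated wedge $u_{a'}(\cdot,t_0)$ near $x=-a'$ is not a narrow spike: it has characteristic width $\sqrt{t_0}$, since $u_{a'}(x,t_0)=\sqrt{t_0}\,W((x+a')/\sqrt{t_0})$. The solution $y$ is an arbitrary positive smooth solution, and nothing prevents it from oscillating on scale $\sqrt{t_0}$ near $x=-a'$, producing several crossings with the wedge there. So ``$Z(t_0)=1$ for large $a'$'' is not clear at all, and your subsequent continuity-in-$a'$ argument has no anchor. More broadly, Angenent's principle gives monotonicity in $t$ for fixed $a'$; it says nothing about how $Z$ varies in $a'$ at fixed $t$, so the ``combined $(a',t)$-deformation'' you sketch does not assemble into a proof.

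The paper's proof goes in the time direction instead, after two reductions that make the $t\downto 0$ picture clean. First it shifts time so that $y$ extends smoothly to $t=0$; second it adds a constant so that $y>\de$ everywhere. Then, rather than varying $a'$, it fixes $a=x_{t}$ but shifts the wedge's own time by a small $\eta>0$, comparing $y(\cdot,s)$ with $\w(\cdot+x_t^\eta,s+\eta)$. At $s=0$ the wedge $\w(\cdot+x_t^\eta,\eta)$ has, for $\eta$ small, its portion above height $\de$ confined to an arbitrarily narrow vertical strip near $x=-x_t$, so it crosses the graph of $y(\cdot,0)>\de$ exactly once, transversely. Angenent then gives at most one crossing at the later time $t$, and the construction forces that crossing to be at $x=0$; taking $\eta\downto 0$ finishes. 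Note that the reduction $y>\de$ also disposes of your $x\to\infty$ worry for free: since $u_a\to 0$ while $y>\de$, the sign of $\phi$ at $+\infty$ is automatic, with no need to rule out ``anomalously fast decay'' of $y$.
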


\begin{figure}
\centering

\begin{tikzpicture}[scale=.83]


\draw [->] (-3,0) -- (5,0); 
\draw [->] (0,0) -- (0,5) node[right]{$\color{red} \ \w(\cdot,t)$};

\draw[thick, domain=1:5, samples=50, red] 
plot (\x, {
(1/(\x))
});
\draw[thick, domain=1:5, samples=50, red] 
plot ( {(1/(\x))},\x) ;

\draw [thick, dotted] (0.5,2) -- (0.5,0) node[below]{$x_t$};
\draw [thick, dotted] (0.5,2) -- (0,2) node[left]{$y(0,t)$};

\draw [dotted] (2,0.5) -- (0,0.5) node[left]{$x_t$};
\draw [dotted] (2,0.5) -- (2,0) node[below]{$y(0,t)$};

\draw[thick, blue] plot[smooth] coordinates {  (-3,2.5) (-1.5, 3) (-0.8, 3.2) (0, 2)  (2, 0.7)   (3, 1)   (4, 0.5)  (5, 0.4)    };

\path (-3,3) node[above right]{$\color{blue} y(\cdot,t)$};

\end{tikzpicture}
%
%
\begin{tikzpicture}[scale=.83]


\draw [->] (-3,0) -- (5,0); 
\draw [->] (0,0) -- (0,5) node[right]{$\color{red} \ \w(\cdot+x_t,t)$};

\draw[thick, domain=1:5, samples=50, red] 
plot (\x-0.5, {
(1/(\x))
});
\draw[thick, domain=1:5, samples=50, red] 
plot ( {(1/(\x))-0.5},\x) ;

\draw [thick, dotted] (-0.5,5) -- (-0.5,0) node[below]{$-x_t$};

\draw[thick, blue] plot[smooth] coordinates {  (-3,2.5) (-1.5, 3) (-0.8, 3.2) (0, 2)  (2, 0.7)   (3, 1)   (4, 0.5)  (5, 0.4)    };

\path (-3,3) node[above right]{$\color{blue} y(\cdot,t)$};
\draw (2,0) node[below]{$\color{white}y(0,t)$};

\end{tikzpicture}

\caption{Illustration of hypotheses (left) and conclusion (right) of Lemma \ref{y_W_lem}.}
\label{crossing_fig1}
\end{figure}

There are two ways that this property will be applied. 
The first, in the spirit of Angenent's work \cite{angenent1991parabolic}
(see also Ilmanen \cite{ilmanen}, Clutterbuck \cite{clutterbuck_thesis} and 
Nagase-Tonegawa \cite{nagase2009interior}) 
is that it controls the gradient of $y(\cdot,t)$ in terms of the gradient of $W$. The second is that it will give us a lower bound on the area under the graph of $y(\cdot,t)$, depending on $y(0,t)$,  that will effectively give us an upper bound on $y$.

In practice, we will apply this lemma to a shifted flow $(x,t)\mapsto y(x+\ti x,t)$ for some shifting amount $\ti x\in\R$, or to a flipped and shifted flow
$(x,t)\mapsto y(-x+\ti x,t)$. This will give us upper and lower bounds on the gradient of $y(\cdot,t)$ at every point, for example.

\begin{proof}[Proof of Lemma \ref{y_W_lem}]
The intuitive reason for this lemma to be true is that the graphs of $\w(\cdot+x_t,s)$ and $y(\cdot,s)$ 
for infinitesimal $s>0$ should cross precisely once, so 
the graphs cannot cross more than once at the later time $t>s$ by Angenent's principle \cite{angenent1991parabolic}.
Because $\w(x+x_t,t)\to\infty$ as $x\downto -x_t$, this forces the graph of $\w(\cdot+x_t,t)$ to lie above that of 
$y(\cdot,t)$ for $x<0$ and below for $x>0$.
We now give the rigorous proof.

First we claim that it suffices to prove the result 
in the case that $y$ extends to a smooth flow including $t=0$. That is, we may view $y$ as a restriction of a smooth solution 
$y:\R\times [0,\infty)\to (0,\infty)$.
To obtain the result for a general $y:\R\times (0,\infty)\to (0,\infty)$ at some time $t_0>0$, we could apply the restricted case to the shifted flow $\ti y (x,t):=y(x,t+\ep)$ at time $t=t_0-\ep$, for any $\ep\in (0,t_0)$, and then take a limit $\ep\downto 0$.

Next we claim that it suffices to prove the result 
in the case that $y$ is strictly bounded below by some $\de>0$, i.e., that $y:\R\times [0,\infty)\to (\de,\infty)$.
To obtain the result for a general $y:\R\times [0,\infty)\to (0,\infty)$ at some time $t_0>0$, we could apply the restricted case to the flow 
$\hat y(x,t):=\de+ y(x,t)$, also at time $t=t_0$, 
and then take a limit $\de\downto 0$.

To handle the remaining case that 
$y:\R\times [0,\infty)\to (\de,\infty)$, 
we first make a slight adjustment to $x_t$ by defining,
for $\eta>0$,
$$x_t^\eta:=\w(y(0,t),t+\eta)
=(t+\eta)^\half W\left(\frac{y(0,t)}{(t+\eta)^\half}\right)>0$$
so that
\beql{origin_crossing}
y(0,t)=\w(x_t^\eta,t+\eta).
\eeq
By making $\eta>0$ small enough, which makes 
$x_t^\eta$ approach $x_t$ and makes the part of the graph of
$\w(\cdot + x_t^\eta, \eta)$ lying above height $\de$ approach the vertical line $x=-x_t$, 
we can ensure that
the graph of the initial data $y(\cdot,0)$ 
meets the graph of 
$\w(\cdot + x_t^\eta, \eta)$
at precisely one point, and that the graphs cross transversely there, as illustrated in Figure \ref{Weta_cross}.


\begin{figure}
\centering
\begin{tikzpicture} 

\draw [->] (-3,0) -- (5,0) ; 
\draw [->] (1,0) -- (1,5) ; 

\draw[thick, domain=0.6:5, samples=50] 
plot (\x, {
(0.36/(\x))
});
\draw[thick, domain=0.6:5, samples=50] 
plot ( {(0.36/(\x))},\x);


\draw[dashed] (0,0) node[below]{$ - x^\eta_t$} -- (0, 5) ; 
\draw[dashed] (-3,1.4) -- (5,1.4) node[right]{$\delta$} ;

\draw[thick] plot[smooth] coordinates { (-3, 2)      (-1, 3.3) (0, 2.2) (1, 4.5)  (2, 1.6)   (3, 2)   (4, 2)  (5, 3)    };
\draw (5,3) node[right]{$y(\cdot, 0)$};
\draw (5,.2) node[right]{$\w(\cdot + x_t^\eta, \eta)$} ;

\end{tikzpicture}
\caption{Single crossing between $y(\cdot, 0)$ and $\w(\cdot + x_t^\eta, \eta)$.}
\label{Weta_cross}
\end{figure}

By Angenent's principle \cite{angenent1991parabolic}, this implies that 
the graph of  $y(\cdot,t)$ 
is crossed only once by the graph of 
$\w(\cdot + x_t^\eta, t+\eta)$, and that must be at $x=0$
by \eqref{origin_crossing}.
In particular, we have 
$$y(x,t)-\w(x+x_t^\eta,t+\eta)\leq 0\qquad\text{ for }x\in (-x_t^\eta,0),$$
and 
$$y(x,t)-\w(x+x_t^\eta,t+\eta)\geq 0\qquad\text{ for }x > 0.$$
Taking $\eta\downto 0$ completes the proof.
\end{proof}

\begin{proof}[Proof of Theorem \ref{ES_improvement_thm} and Corollary \ref{ES_imp_cor}]
By scaling, we need only prove the estimate at $t=1$.
By translating the solution in $x$, we may derive the estimate \eqref{HCG_est} at $x=0$.
Lemma \ref{y_W_lem} (keeping in mind that $x_1=W(y(0,1))$)
tells us that
$$y_x\geq W'(W(y))$$
at $(x,t)=(0,1)$.
Similarly, by applying the same lemma to $\ti y(x,t):=y(-x,t)$ tells us that
$$y_x=-\ti y_x \leq -W'(W(y))$$
at $(0,1)$.
Combining gives
$$|y_x|\leq -W'(W(y)).$$
Because $W=W^{-1}$, we have 
$$-W'(W(y))=\frac{1}{-W'(y)},$$
so
\beql{y_x_1_W}
|y_x|\leq \frac{1}{-W'(y)}.
\eeq
But our formula for $W'$ in terms of area from Part \ref{W_prime_area_item} of Lemma \ref{RAW_lem} gives
$$-W'(y)=\tan\ca_0(y),$$
which then yields the inequality of \eqref{HCG_est} of Theorem \ref{ES_improvement_thm}.
The rest of \eqref{HCG_est} follows instantly from 
\eqref{a0a1_juggling}.

To continue to Corollary \ref{ES_imp_cor}, we instead compute
$$-W'(y)\geq \arctan (-W'(y))=\ca_0(y):=\half yW(y)+\si(y)\geq \half yW(y),$$
and so \eqref{y_x_1_W} gives
$$|y_x|\leq \frac{2}{y W(y)}.$$
By Part \ref{W_lower_item_new} of Lemma \ref{RAW_lem}, for sufficiently large $y$, say $y\geq Y$,  we have 
$$W(y)\geq d e^{d^2/4}\frac{e^{-y^2/4}}{y^2}$$
so
$$|y_x|\leq C y e^{y^2/4}$$
for $C$ universal. But if $y<Y$ then \eqref{y_x_1_W} gives us $|y_x|<C$, with $C$ universal. Together, these estimates complete the proof.
\end{proof}

Lemma \ref{y_W_lem} will be used in a different way to prove the delayed height control estimate of Theorem \ref{refine_thm1}.

\begin{proof}[{Proof of Theorem \ref{refine_thm1}}]
By translation of the solution, it suffices to prove the estimate \eqref{delayed_height_est} at $x=0$.
By parabolically scaling the flow and adjusting $A_0$ accordingly it suffices to prove the estimate at $t=1$.
The hypothesis that $t>\frac{A_0}{\pi}$ now becomes 
$A_0<\pi$.

By Lemma \ref{y_W_lem},  we have
$$y(x,1)\geq \w(x+x_1,1)= W(x+x_1)
\qquad\text{ for }x>0,$$
where $x_1=W(y(0,1))$ comes from \eqref{x_t_def}.
In particular, 
\beqa
\int_0^\infty y(x,1)dx 
& \geq \int_0^\infty  W(x+x_1)  dx\\
&= \si(x_1)\\
&= \si(W(y(0,1)))
\eeqa
By repeating the argument for $\ti y(x,t):=y(-x,t)$,
we find that 
$$\int_{-\infty}^0 y(x,1)dx \geq \si(W(y(0,1))),$$
and adding gives 
$$2\si(W(y(0,1)))\leq \|y(\cdot,1)\|_{L^1(\R)}=A_0,$$
where we have used Corollary \ref{est_thm_harnack_cor} to Theorem \ref{est_thm_harnack} in the final equality.
Rearranging gives 
$$y(0,1)\leq W\circ \si^{-1}\left(\frac{A_0}{2}\right)$$
as required.
\end{proof}

\begin{proof}[{Proof of Corollary \ref{refine_cor1}}]
Our task is to obtain asymptotics for the estimate 
\eqref{delayed_height_est} for $t$ just beyond the threshold time, or equivalently for 
$\ep:=\frac{\pi}2-\frac{A_0}{2t}>0$ small.

Let $X>0$ be the (small) value for which the area under the graph of $W$ over $(0,X)$ is $\ep$.
Thus $\si^{-1}\left(\frac{A_0}{2t}\right)=X$. 
If we write $Y:=W(X)$, equivalently $X=W(Y)$, then the upper bound for $y$ given by 
Theorem \ref{refine_thm1} will be $t^\half Y$. 

The rapid decay of $\si$ relative to $W$ that was established in 
Part \ref{tail_item} of Lemma \ref{RAW_lem} tells us that
most of the area $\ep$ under $W$ over $(0,X)$ is made up by the area $XY=YW(Y)$ of the rectangle 
$(0,X)\times (0,Y)$. Precisely, we have  
$$Y W(Y)\leq \ep = Y W(Y)+\si(Y) 
\leq Y W(Y)\left[1+\frac{2}{Y^2}\right]$$
and so 
$$\ep=Y W(Y)\big(1+o(1)\big)$$
as $\ep\downto 0$ (which forces $Y\to\infty$).
By Part \ref{W_lower_item_new} of Lemma \ref{RAW_lem}, this gives
$\frac{c}{Y} e^{-Y^2/4}= \ep (1+o(1))$ for explicit $c:=2de^{d^2/4}$, and so 
$$Y^2/4= \log c - \log Y -\log \ep + o(1).$$ 
Since $Y\to\infty$ as $\ep\downto 0$, 
for small enough $\ep>0$ the $\log Y$ term is large enough to dominate both the $\log c$ term and the error term, and so
$$Y^2/4 \leq -\log \ep,$$
which then implies that 
$$y\leq t^\half Y \leq 2t^\half\sqrt{-\log\ep},$$
for sufficiently small $\ep>0$ as required.
\end{proof}

\section{Example of delayed parabolic regularity}
\label{delay_ex_sect}

In this section we prove Theorem \ref{refined_ex}, and hence also Theorem \ref{baby_uncontrollable_thm}.

We apply Theorem \ref{est_thm_harnack} to each $y_0^n$. A first consequence is that each subsequent flow $y^n$ satisfies $y^n\geq 0$. 
By uniqueness, the solutions $y^n$ 
will remain even, i.e., $y^n(x,t)=y^n(-x,t)$.
Applying Lemma \ref{rapid_decay_cpt_spt_lem} to the flows $(x,t)\mapsto y^n(x-\frac{1}{n},t)$ gives us upper bounds
\beql{W_bd_right}
\textstyle y^n(x,t)\leq \w(x-\frac{1}{n},t),
\eeq
for $x>\frac{1}{n}$,
and by evenness of $y^n$, we have the symmetric estimate
\beql{W_bd_left}
\textstyle y^n(x,t)\leq \w(-(x+\frac{1}{n}),t),
\eeq
for $x<-\frac{1}{n}$.
For each $t>0$, the function $x\mapsto y^n(x,t)$ will remain increasing for $x<0$ and thus decreasing for $x>0$. One way of seeing this is that for every $c>0$, 
the initial data $y_0^n$ will cross the horizontal line $\{y=c\}$ precisely twice for $n>c$, and this number cannot increase during the subsequent flow by Angenent's crossing principle \cite{angenent1991parabolic}.

One consequence of \eqref{W_bd_left}, recalling the definition of $\w$ given in \eqref{W_self_sim}, is that for every $x<-\frac{1}{n}$, we can control $\ca(x,t)$ from Theorem \ref{est_thm_harnack}.
Indeed, we have 
\beqa
\ca(x,t) &:=\int_{-\infty}^x y^n(s,t)ds
\leq 
t^{\half} \int_{-\infty}^x W\left(-(s+{\textstyle \frac{1}{n}})t^{-\half}\right) ds\\
&= t \int_{-(x+\frac{1}{n})t^{-\half}}^\infty
W(r)dr\\
&=t\si\left(-(x+{\textstyle\frac{1}{n}})t^{-\half}\right),
\eeqa
and so \eqref{second_arctan_est} gives
\beqa
\arctan y_x^n(x,t) &\leq \half\si\left(-(x+{\textstyle\frac{1}{n}})t^{-\half}\right)+\frac{\pi}4\\
&\leq \half\si(\de)+\frac{\pi}4 < \frac{\pi}{2}
\eeqa
throughout the region $(x,t)$ for which 
$-(x+\frac{1}{n})t^{-\half}\geq \de>0$.
Because this region will swallow up the space-time subset 
$(-\infty,x_0]\times [0,T]$ (arbitrary $x_0<0$ and $T>0$)
once $\de>0$ is small enough and $n$ large enough, we obtain gradient bounds for $y^n$ on $(-\infty,x_0]\times [0,T]$ for sufficiently large $n$, and so parabolic regularity and compactness allow us to pass to a subsequence to extract a smooth local limit $y^\infty$ on 
$(-\infty,0)\times [0,\infty)$.
By evenness of $y^n$, we also obtain local convergence on 
$(0,\infty)\times [0,\infty)$.

We now establish similar local convergence on 
$\R\times (\tau,\infty)$.
For this it is most efficient to apply the consequence of the Harnack estimate given in Remark \ref{even_harnack_rmk}, with $A_0=1$.
That tells us that for $t>\tau:=\frac{1}{\pi}$ and $x<0$ we have
$$\textstyle 0\leq y_x^n \leq \tan\left[\frac14(\frac{1}{t}+\pi)\right],$$
and by evenness this gives that for all $x\in\R$ and $t>\tau=\frac{1}{\pi}$ we have
$$\textstyle
|y_x^n|(x,t) \leq \tan\left[\frac14(\frac{1}{t}+\pi)\right]<\infty.$$
Having this  upper bound on $|y_x^n|$ over
$\R\times [\tau+\de,\infty)$, any $\de>0$, that is uniform in $n$, allows
us to invoke parabolic regularity theory to get uniform local bounds on the $C^k$ norms of $y^n$.
In particular, we can pass to a further subsequence to obtain a smooth local limit $y^\infty$ on 
$\R\times (\tau,\infty)$ as required. 

The remaining claim of the theorem is the exact formula for $y^\infty$ for $t\in (0,\tau)$. By evenness we only need consider $x>0$.
By passing \eqref{W_bd_right} to the limit $n\to\infty$ 
we find that
$$y^\infty(x,t)\leq \w(x,t),$$ 
for all $x>0$ and all $t>0$.
We need only to prove the reverse inequality for $t\in (0,\tau)$.
To do this, we first need a lower bound for $y^n(0,t)$.
By \eqref{W_bd_right}, keeping in mind Part \ref{P3} of Lemma \ref{RAW_lem}, we have
$$\|y^n\|_{L^1([\frac1{n}, \infty))}
\leq \frac{\pi t}2.$$
By evenness, and the fact that $\|y^n\|_{L^1(\R)}=1$, 
we have
$$\|y^n\|_{L^1([0,\frac1{n}])}\geq\half -\frac{\pi t}2.$$
Because $y^n$ achieves its maximum at $x=0$, this then implies
$$y^n(0,t)\geq \frac{n}{2}\left(
1-\pi t\right),$$
so $y^n(0,t)\to\infty$ as $n\to\infty$, locally uniformly in 
$t\in [0,\tau)=[0,\frac1\pi)$.

As a consequence of this lower bound at $x=0$, we can use the right-angled wedge solution as a \textit{lower} barrier also. More precisely, for every $t_0\in (0,\tau)$
and every $\ep>0$,  we can use the shifted solution 
$(x,t)\mapsto -\ep+\w(x+\ep,t)$
as   a lower bound for $y^n$ over $[0,\infty)\times [0,t_0]$ for sufficiently large $n$.
Taking a limit first $n\to\infty$, and then $\ep\downto 0$, we find that
$$y^\infty(x,t)\geq \w(x,t),$$ 
for all $x>0$ and $t\in (0,t_0)$.
Since $t_0\in (0,\tau)$ is arbitrary, we obtain the lower bound for all $t\in (0,\tau)$ as required.

This completes the proof of Theorem \ref{refined_ex}.

\section{\texorpdfstring{$L^p$ initial data for $p>1$}{Lp initial data for p>1}}
\label{Lpsect}

We prove Theorem \ref{Lp_reg} closely following the strategy in \cite{TY1}.

\begin{proof}
Without loss of generality we may assume that $y_0\geq 0$, and hence also $y\geq 0$ and prove just the desired upper bound for $y$.
This is because otherwise we can replace $y_0$ by $|y_0|$ (which has the same $L^p$ norm) and then observe that by Corollary \ref{comp_princ_loc_lip} the Ecker-Huisken flow starting with $|y_0|$ must dominate both $y$ and $-y$.

Consider now the $p$, $y$ and $t$ from Theorem \ref{Lp_reg}.
If $t\geq \|y_0\|_{L^1}\geq \frac{2}{\pi}\|y_0\|_{L^1}$ then Corollary \ref{refine_cor}  gives
$\sup y(\cdot,t)\leq Ct^\half $,
but by assumption, 
\beql{t_half}
t^\half = t^{-\frac{1}{p-1}}t^{\frac{p+1}{2(p-1)}}\leq 
t^{-\frac{1}{p-1}} \|y_0\|_{L^p}^{(\frac{p+1}{2(p-1)})(\frac{2p}{p+1})}
=t^{-\frac{1}{p-1}} \|y_0\|_{L^p}^{\frac{p}{p-1}}.
\eeq
so
$$\sup y(\cdot,t)\leq C t^{-\frac{1}{p-1}} \|y_0\|_{L^p}^{\frac{p}{p-1}}$$
as required.
If instead 
$t< \|y_0\|_{L^1}$ 
then we apply Corollary \ref{refine_cor} to the flow $\ti y$ starting with initial data $(y_0-k)_+$ for  $k\geq 0$ chosen so that 
$\|(y_0-k)_+\|_{L^1}= t$ 
to give $\sup \ti y(\cdot,t)\leq Ct^\half$.
The comparison principle of Corollary \ref{comp_princ_loc_lip} ensures that 
$$y(x,t)\leq k+\ti y(x,t) $$ 
since the inequality holds at $t=0$, so we need to bound $k$. 
By estimating 
$$\|y_0\|_{L^p}^p\geq \int_{\{y_0\geq k\}}y_0^p
\geq k^{p-1}\int_{\{y_0\geq k\}}y_0\geq k^{p-1}\|(y_0-k)_+\|_{L^1}=k^{p-1} t, $$ 
we obtain
$$k\leq \|y_0\|_{L^p}^\frac{p}{p-1}
t^{-\frac{1}{p-1}},$$
and combining gives
$$y(x,t)\leq  t^{-\frac{1}{p-1}}\|y_0\|_{L^p}^\frac{p}{p-1} + Ct^\half,$$
for universal $C$.
Appealing to \eqref{t_half} again gives the conclusion.
\end{proof}

\section{Flowing from rough initial data}
\label{L1_exist_sect}

In this section we prove the existence of solutions to graphical curve shortening flow starting with $L^1$ initial data, as asserted in  Theorem \ref{L1_exist_thm}.
The proof will be broken down into the existence of Lemma \ref{y_construct_lem} and the attainment of initial data of Lemma \ref{init_data_attained_lem}.

We begin by taking a sequence of mollifications $y^n_0$ of $y_0$.
The smooth functions $y^n_0$ converge to $y_0$ in $L^1(\R)$ as $n\to\infty$.
Let $y^n:\R\times [0,\infty)\to \R$ be the  Ecker-Huisken flows having 
$y^n_0$ as initial data. We will use our new estimates to get uniform control on these flows.

\begin{lem}[Construction of solution]
\label{y_construct_lem}
The functions $y^n$ and all their derivatives are locally bounded on $\R\times (0,\infty)$, independently of $n$.
In particular, after passing to a subsequence in $n$, the solutions $y^n$ converge smoothly locally on $\R\times (0,\infty)$ to a solution
$y:\R\times (0,\infty)\to\R$ of \eqref{GCSF}.
\end{lem}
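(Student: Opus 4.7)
The plan is to first establish uniform local $L^\infty$ bounds on $y^n$ over compact subsets of $\R \times (0,\infty)$, then use Theorem \ref{ES_improvement_thm} and standard parabolic bootstrapping to upgrade to uniform $C^k$ bounds, and finally extract a smoothly convergent subsequence.

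First, I would reduce to the case $y_0 \geq 0$. For a general $y_0 \in L^1(\R)$, Corollary \ref{comp_princ_loc_lip} gives $|y^n| \leq \tilde y^n$, where $\tilde y^n$ is the Ecker-Huisken flow of $|y_0^n|$, so local bounds on $\tilde y^n$ transfer to $y^n$. Hence we may assume $y_0 \geq 0$, $y^n \geq 0$, and write $A_0^n := \|y_0^n\|_{L^1} \to A_0 := \|y_0\|_{L^1}$. For the post-threshold regime $t > \tau + \ep$ (with $\tau := A_0/\pi$ and $\ep > 0$ fixed), $\tau^n := A_0^n/\pi < \tau + \ep/2$ for large $n$, so Corollary \ref{refine_cor} supplies a uniform height bound $y^n \leq C t^{1/2}$ and Theorem \ref{est_thm_1} a uniform gradient bound; Ecker-Huisken's interior estimates then produce uniform $C^k$ bounds on $\R \times [\tau+\ep, \infty)$ for every $k$.

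The main obstacle is the sub-threshold regime $t \in (0, \tau+\ep]$, where these estimates degenerate. The extra structure that distinguishes our setting from the pathology in Theorem \ref{refined_ex} is that $\{y_0^n\}$ converges in $L^1(\R)$ to $y_0$, an absolutely continuous (hence nonatomic) measure: for every $\eta>0$ there exists $r>0$ such that $\int_{|x-x_0|<r} y_0^n < \eta$ uniformly in $x_0$ in any bounded set and for all large $n$. I would use this to control the height of $y^n$ on a compact set $K = [-R,R] \times [\de, \tau+\ep]$ as follows: for each $(x_0,t_0)\in K$, pick $r$ so that the local mass of $y_0^n$ in $(x_0-r, x_0+r)$ is below $\pi\de/4$. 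Then combine the crossing comparison of Lemma \ref{y_W_lem} applied around $x_0$ with right-angled-wedge barriers analogous to those in the proof of Lemma \ref{rapid_decay_cpt_spt_lem} to control the import of mass into $(x_0-r, x_0+r)$ from its complement over the time interval $[0,t_0]$. The localised version of the argument proving Theorem \ref{refine_thm1} then yields a bound of the form $y^n(x_0,t_0) \leq t_0^{1/2}\, W\!\left(\si^{-1}\!\left(\tfrac{m_n(x_0,t_0)}{2t_0}\right)\right)$, where $m_n(x_0,t_0)$ is the effective local mass, kept strictly below $\pi t_0/2$.

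The hard part will be this last step: bounding quantitatively the mass that flows into $(x_0-r, x_0+r)$ during $[0,t_0]$, so that $m_n(x_0,t_0) < \pi t_0/2$ even after accounting for transport from outside. The rapid decay of $W$ established in Part \ref{W_lower_item_new} of Lemma \ref{RAW_lem} should make this feasible, since the contribution of mass initially far from the window is suppressed by wedge barriers with a Gaussian-type decay profile. Once uniform $L^\infty$ control on $K$ is in hand, Theorem \ref{ES_improvement_thm} delivers uniform gradient bounds, parabolic bootstrapping yields uniform $C^k$ bounds for all $k$, and a diagonal subsequence of $y^n$ converges smoothly locally on $\R\times(0,\infty)$ to a solution $y$ of \eqref{GCSF}.
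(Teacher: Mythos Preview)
Your post-threshold argument (for $t>\tau+\ep$) is fine and matches the paper. The issue is your treatment of the sub-threshold regime $t\in(0,\tau+\ep]$. You propose a spatial localisation of Theorem~\ref{refine_thm1}, tracking how much mass can flow into a window $(x_0-r,x_0+r)$ over $[0,t_0]$ and then applying a localised delayed-height bound. As you yourself flag, controlling this mass transport is the hard part, and none of the results stated in the paper (Lemma~\ref{y_W_lem}, Lemma~\ref{rapid_decay_cpt_spt_lem}, Theorem~\ref{refine_thm1}) give a ready-made local version; the paper explicitly defers such localisations to \cite{arjun_thesis}, where they are developed for the more general measure-initial-data result. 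So as written your sub-threshold step is incomplete and would require substantial new work.

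The paper avoids all of this with a much simpler trick: instead of localising in \emph{space}, truncate in \emph{height}. Given any $\ep>0$, choose $k\geq 0$ so that $\|(y_0-k)_+\|_{L^1}=\frac{\pi\ep}{2}$ (possible because $y_0\in L^1$), and let $\tilde y^n$ be the Ecker--Huisken flow of $(y_0^n-k)_+$. Then the threshold time for $\tilde y^n$ is at most $\ep$ for large $n$, so Corollary~\ref{refine_cor} gives $\tilde y^n(\cdot,\ep)\leq C\ep^{1/2}$, and by comparison $y^n(\cdot,\ep)\leq k+\tilde y^n(\cdot,\ep)\leq k+C\ep^{1/2}$, a bound independent of $n$. (The case $\ep\geq\frac{2}{\pi}\|(y_0)_+\|_{L^1}$ needs no truncation.) This height bound at time $\ep$ persists by comparison, Theorem~\ref{ES_improvement_thm} then gives gradient bounds from time $2\ep$, and parabolic regularity does the rest. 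Note this is exactly the mechanism behind Theorem~\ref{Lp_reg}; the $L^1$ hypothesis on $y_0$ is what makes the level set $\{y_0>k\}$ carry arbitrarily small mass for large $k$, replacing your non-atomicity observation with a global statement that feeds directly into the existing global delayed-height estimate.
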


\begin{proof}
It suffices to prove $C^k$ estimates for $y^n$ on time intervals $[3\ep,L]$, for 
arbitrary $0<3\ep<L$. The estimates may depend on $\ep$, $L$ and $k$, but must be uniform in $n$.
By parabolic regularity theory it  suffices  to prove that we have uniform (independent of $n$) bounds on $|y^n|$ and the first derivatives $|y^n_x|$ over the time interval $[2\ep,L]$. 
This gives from time $2\ep$ to time $3\ep$ for parabolic regularity to produce a $C^k$ bound.

By Theorem \ref{ES_improvement_thm}, or Corollary \ref{ES_imp_cor}, it suffices to 
prove that we have a uniform $C^0$ estimate  for $y^n$ on time intervals  $[\ep,L]$.
Note that if we obtain a $C^0$ estimate $|y^n|\leq M$ for $t\in[\ep,L]$, then we would apply Theorem \ref{ES_improvement_thm} or Corollary \ref{ES_imp_cor}
to the \textit{non-negative} solution $M+y^n$, which is bounded above by $2M$,
starting from time $\ep$. From time $2\ep$ (say) onwards we would obtain a uniform 
bound on $|y^n_x|$.
It suffices to 
prove that we have a uniform ($n$-independent) upper bound  for $y^n$ at time $\ep$ alone, since that bound will persist, by the comparison principle, 
and we obtain a lower bound for free by applying our arguments to $-y^n$.


We will obtain the uniform upper bound for $y^n$ at time $\ep$ using our delayed height estimate of Theorem \ref{refine_thm1}, in the form of Corollary \ref{refine_cor}.
First note that the  mollification of $y_0$ (i.e.  $y_0^n$) 
is dominated by the mollification of $(y_0)_+$, which in turn has the same $L^1$ norm
as $(y_0)_+$. Therefore 
\beql{moll_fact}
\|(y^n_0)_+\|_{L^1(\R)}\leq 
\|(y_0)_+\|_{L^1(\R)}.
\eeq

If $\ep\geq\frac{2}{\pi}\|(y_0)_+\|_{L^1(\R)}$, 
we  apply Corollary \ref{refine_cor}
not to the flow $y^n$ starting with $y^n_0$, but to the Ecker-Huisken solution $\ti y^n$ starting with $(y^n_0)_+$.
We apply it at time $\ep\geq\frac{2}{\pi}A_0$ where $A_0=\|(y^n_0)_+\|_{L^1(\R)}$ for this application.
The conclusion is an upper bound
$$\ti y^n(\cdot, \ep) \leq K:=C\ep^\half,$$ 
for universal $C$.
%
By the comparison principle (comparing $\ti y^n$ to $y^n$ over the time interval $[0,\ep]$ using Corollary \ref{comp_princ_loc_lip}) we find that 
$$y^n(\cdot, \ep) \leq \ti y^n(\cdot, \ep)\leq K,$$
which provides the desired $n$-independent upper bound for $y^n(\cdot, \ep)$.

If instead $\ep<\frac{2}{\pi}\|(y_0)_+\|_{L^1(\R)}$, then we pick $k>0$ such that 
$$\ep=\frac{2}{\pi}\|(y_0-k)_+\|_{L^1(\R)},$$
and note that $k$ depends only on $\ep$ and $y_0$.
We can then repeat the argument above with $\ti y^n$ now the Ecker-Huisken solution starting with $(y_0^n-k)_+$, with $A_0=\|(y^n_0-k)_+\|_{L^1(\R)}$, and applied now
at time $\ep\geq \frac{2}{\pi}A_0$ for this different $A_0$.
The conclusion is again that 
$$\ti y^n(\cdot, \ep) \leq K,$$
for $K$ independent of $n$.
This time we complete the argument by comparing $y^n$ to $k+\ti y^n$ to give
$$y^n(\cdot, \ep) \leq  k+\ti y^n(\cdot, \ep)\leq k+K,$$
which again provides the desired $n$-independent upper bound for $y^n(\cdot, \ep)$.
\end{proof}

We have constructed a candidate solution $y$. To complete the proof of Theorem \ref{L1_exist_thm}
we have to show that 
it attains the correct initial data, i.e. we can interchange the limits $n\to\infty$ and $t\downto 0$.

\begin{lem}[Initial data attained]
\label{init_data_attained_lem}
The solution $y:\R\times (0,\infty)\to\R$ constructed in Lemma \ref{y_construct_lem} satisfies $y(\cdot,t)\to y_0$ in $L^1(\R)$ as $t\downto 0$.
\end{lem}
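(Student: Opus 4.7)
The plan is to establish an $L^1$-contraction property of the Ecker-Huisken flow, and to combine it with the locally smooth convergence $y^n \to y$ from Lemma \ref{y_construct_lem} together with the attainment of initial data by each smoothly started flow $y^n$.

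First, I would prove the following $L^1$-contraction estimate: if $\tilde y^1, \tilde y^2$ are two Ecker-Huisken flows starting from locally Lipschitz data $\tilde y^1_0, \tilde y^2_0$ whose difference lies in $L^1(\R)$, then
\[
\|\tilde y^1(\cdot,t) - \tilde y^2(\cdot,t)\|_{L^1(\R)} \leq \|\tilde y^1_0 - \tilde y^2_0\|_{L^1(\R)}
\]
for every $t > 0$. The idea is to sandwich both flows between two further Ecker-Huisken flows $u^+$ and $u^-$ starting from $\max(\tilde y^1_0, \tilde y^2_0)$ and $\min(\tilde y^1_0, \tilde y^2_0)$ respectively; the comparison principle (Corollary \ref{comp_princ_loc_lip}) gives $u^- \leq \tilde y^i \leq u^+$ for $i=1,2$, hence $|\tilde y^1 - \tilde y^2| \leq u^+ - u^-$. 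The non-negative quantity $u^+ - u^-$ has conserved total mass because $u^\pm$ both satisfy \eqref{GCSF} and their difference has sufficient spatial decay: after a compactly supported truncation of the initial data, Theorem \ref{est_thm_harnack} gives exact conservation, and a limiting argument (using Lemma \ref{pres_area_lem} and the rapid tail decay of Lemma \ref{rapid_decay_cpt_spt_lem}) recovers the general case.

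Second, applying this contraction with $\tilde y^1 = y^n$ and $\tilde y^2 = y^m$, and letting $m \to \infty$ along the subsequence for which $y^m \to y$ smoothly locally, Fatou's lemma yields
\[
\|y^n(\cdot,t) - y(\cdot,t)\|_{L^1(\R)} \leq \liminf_{m\to\infty} \|y^n_0 - y^m_0\|_{L^1(\R)} = \|y^n_0 - y_0\|_{L^1(\R)},
\]
which is small, uniformly in $t > 0$, for large $n$. Separately, for each fixed $n$ I need $\|y^n(\cdot,t) - y^n_0\|_{L^1(\R)} \to 0$ as $t \downto 0$. Since $y^n_0$ is smooth, the flow $y^n$ is smooth on $\R \times [0,\infty)$, so $y^n(\cdot,t) \to y^n_0$ uniformly on compacts as $t \downto 0$. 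To upgrade to $L^1$, I would truncate $y^n_0$ smoothly outside $|x| \leq R$ to obtain a compactly supported $y_0^{n,R}$ with $\|y^n_0 - y_0^{n,R}\|_{L^1}$ arbitrarily small; by the $L^1$ contraction, the corresponding flow is uniformly $L^1$-close to $y^n$, while Lemma \ref{rapid_decay_cpt_spt_lem} controls its tails uniformly in small $t$, yielding the required control on $\|y^n(\cdot, t)\|_{L^1(|x| > R')}$ outside a slightly larger ball.

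The conclusion then follows from the triangle inequality
\[
\|y(\cdot,t) - y_0\|_{L^1} \leq \|y(\cdot,t) - y^n(\cdot,t)\|_{L^1} + \|y^n(\cdot,t) - y^n_0\|_{L^1} + \|y^n_0 - y_0\|_{L^1}
\]
upon choosing $n$ large first and then $t$ small. The main technical obstacle is the $L^1$-contraction, which hinges on exact mass conservation \emph{for the difference} of two solutions whose initial data differ by an $L^1$ function; once this is in hand the rest is standard approximation bookkeeping.
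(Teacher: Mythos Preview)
Your overall strategy matches the paper's: the same triangle-inequality split, with closeness of $y^n(t)$ to $y(t)$ controlled uniformly in $t$ by a stability estimate between approximating flows, and each $y^n$ attaining its smooth initial data in $L^1$. The difference lies in the stability estimate. You aim for exact $L^1$-contraction via a sandwich between flows $u^\pm$ from $\max$ and $\min$, reducing to mass conservation of $u^+-u^-$. The paper instead proves directly a Separation Lemma (Lemma~\ref{separation_lem}): for any two smooth solutions, $\|(y^1(t)-y^2(t))_+\|_{L^1}\le\|(y^1(0)-y^2(0))_+\|_{L^1}+2\pi t$, obtained by differentiating $\int(y^1-y^2)_+\varphi$ in time, integrating by parts, noting that the boundary terms on $\partial\{y^1>y^2\}$ carry the favourable sign, and bounding the remainder using $|\arctan|\le\pi/2$. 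The $2\pi t$ error is harmless as $t\downto 0$ and avoids any conservation issue; the same lemma with $y^2\equiv 0$ also handles $L^1$ attainment for the smooth $y^m$ by preventing loss of mass at infinity.

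Your route is viable, but its justification needs more care than you indicate. The mass-conservation results you cite (Theorem~\ref{est_thm_harnack} and the second part of Lemma~\ref{pres_area_lem}) are stated for \emph{non-negative} data, while $u^\pm$ are generally signed; the extension to signed compactly supported data is routine but should be stated. More seriously, your ``limiting argument'' from compact support to general $L^1$ data is not delivered by the cited lemmas: Lemma~\ref{pres_area_lem} gives only smooth local convergence, and the tail bound of Lemma~\ref{rapid_decay_cpt_spt_lem} depends on the truncation radius, so exact conservation for the untruncated difference does not follow. What does work is to combine the compactly supported case with Fatou to obtain the \emph{inequality} $\|u^+(t)-u^-(t)\|_{L^1}\le\|u^+_0-u^-_0\|_{L^1}$, which is all the sandwich needs. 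With that fix the argument goes through; the paper's Separation Lemma simply offers a more self-contained path.
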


Before we can prove this, we need to understand how quickly two nearby solutions can separate under the flow. We abbreviate $y(\cdot,t)$ by $y(t)$.

\begin{lem}[Separation estimate]
\label{separation_lem}
Let $y^1$, $y^2$ be two smooth solutions to \eqref{GCSF} on $I\times [0,T)$, where $I\subset\R$ is any open interval, finite or infinite.
Then 
$$\|\big(y^1(t)-y^2(t)\big)_+\|_{L^1(I)}\leq \|\big(y^1(s)-y^2(s)\big)_+\|_{L^1(I)}+2\pi (t-s)$$
for all $0\leq s\leq t<T$.
\end{lem}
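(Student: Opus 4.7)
Set $w:=y^1-y^2$, which is smooth on $I\times[0,T)$ and satisfies
$$w_t=\big(\arctan y^1_x-\arctan y^2_x\big)_x.$$
The strategy is to compute $\frac{d}{dt}\int_I w_+\,dx$ by smoothing the positive part, integrating by parts in $x$, estimating the resulting boundary flux using the fact that $\arctan$ takes values in $(-\frac{\pi}{2},\frac{\pi}{2})$, and discarding a sign-definite interior term coming from the monotonicity of $\arctan$.

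Concretely, let $\phi_\de\in C^\infty(\R)$ be a non-decreasing cutoff with $\phi_\de(r)=0$ for $r\leq 0$, $\phi_\de(r)=1$ for $r\geq \de$, and $0\leq\phi_\de\leq 1$, and set $\Phi_\de(r):=\int_0^r \phi_\de$, so that $\Phi_\de(r)\upto r_+$ as $\de\downto 0$. Choose a compactly supported subinterval $J\Subset I$ (for $I=\R$ take $J=(-R,R)$; for finite $I=(a,b)$ take $J=(a+\eta,b-\eta)$). Then
\begin{equation*}
\frac{d}{dt}\int_J \Phi_\de(w)\,dx
=\int_J \phi_\de(w)\,\big(\arctan y^1_x-\arctan y^2_x\big)_x\,dx.
\end{equation*}
Integration by parts gives
\begin{equation*}
\int_J \phi_\de(w)\,\big(\arctan y^1_x-\arctan y^2_x\big)_x\,dx
=\Big[\phi_\de(w)\big(\arctan y^1_x-\arctan y^2_x\big)\Big]_{\partial J}
-\int_J \phi_\de'(w)\,w_x\,\big(\arctan y^1_x-\arctan y^2_x\big)\,dx.
\end{equation*}
The interior term is non-negative: $\phi_\de'(w)\geq 0$, and monotonicity of $\arctan$ forces $w_x(\arctan y^1_x-\arctan y^2_x)=(y^1_x-y^2_x)(\arctan y^1_x-\arctan y^2_x)\geq 0$. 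The boundary term is bounded in absolute value by $\pi$ at each endpoint since $|\arctan y^1_x-\arctan y^2_x|<\pi$ and $0\leq\phi_\de\leq 1$, giving a total contribution of at most $2\pi$. Therefore
\begin{equation*}
\frac{d}{dt}\int_J \Phi_\de(w)\,dx \leq 2\pi.
\end{equation*}

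Integrating this in time from $s$ to $t$, and then letting $\de\downto 0$ (by dominated convergence on the bounded interval $J$, using that $w$ is smooth hence bounded on $J\times[s,t]$), yields
$$\int_J (w(t))_+\,dx -\int_J (w(s))_+\,dx \leq 2\pi(t-s).$$
Finally, exhausting $I$ by a sequence of such $J$ and applying monotone convergence (which is harmless since the right-hand side is trivial unless $\|(w(s))_+\|_{L^1(I)}<\infty$, in which case $(w(t))_+\in L^1(I)$ emerges from the inequality itself) produces the stated estimate.

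The only mildly delicate point is the justification of interchanging $\frac{d}{dt}$ with the spatial integral on $J$; this is routine because $J$ is relatively compact in $I$ and $w$, $w_t$ are smooth on $\overline{J}\times[s,t]$. The conceptually crucial step, which is where the constant $2\pi$ arises and matches the statement sharply, is the uniform bound $|\arctan y^1_x-\arctan y^2_x|<\pi$ at the two endpoints of $J$; everything else is standard.
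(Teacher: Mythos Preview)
Your proof is correct and rests on the same two facts as the paper's argument: monotonicity of $\arctan$ kills the interior contribution with a good sign, and the range $(-\tfrac{\pi}{2},\tfrac{\pi}{2})$ of $\arctan$ caps the boundary flux by $\pi$ per endpoint, hence $2\pi$ in total.

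The regularisation strategies differ, though. You smooth the map $r\mapsto r_+$ via $\Phi_\de$ and work on a hard compact subinterval $J\Subset I$, so a single integration by parts produces the boundary term at $\partial J$ and an interior term whose sign is immediate from $\phi_\de'\geq 0$ together with $(y^1_x-y^2_x)(\arctan y^1_x-\arctan y^2_x)\geq 0$. The paper instead keeps $(y^1-y^2)_+$ as is (via its weak derivative), multiplies by a smooth spatial cutoff $\vph\in C_c^\infty(I,[0,1])$, and decomposes the positivity set $\{y^1>y^2\}$ into countably many open intervals; on each such interval the endpoint contributions are discarded using the sign of $y^1_x-y^2_x$ at crossing points, and the cutoff derivative $\vph_x$ absorbs the remaining flux, yielding the bound $\pi\int|\vph_x|\,dx$ which becomes $2\pi$ after sending $\vph\upto\chi_I$. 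Your route is arguably tidier in that it sidesteps the interval decomposition and the weak-derivative bookkeeping; the paper's route has the minor advantage of making explicit how the constant $2\pi$ arises from the total variation of the limiting cutoff $\chi_I$, which would adapt naturally to weighted versions of the estimate.
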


\begin{proof}
Let $\vph\in C_c^\infty(I,[0,1])$. 
It suffices to prove that 
\beql{suff_to_p}
\|\big(y^1(t)-y^2(t)\big)_+\vph\|_{L^1(\R)}\leq \|\big(y^1(s)-y^2(s)\big)_+\vph\|_{L^1(\R)}
+\pi (t-s)\int|\vph_x| dx
\eeq
for all $0\leq s\leq t<T$ because we could then take a sequence of  cut-off functions $\vph$ that increase to $\chi_I$, each having $\int|\vph_x|dx=2$.
Moreover, we may assume that $s>0$ since otherwise we can apply the result with $s>0$ and take a limit $s\downto 0$.

The function $(y^1-y^2)\vph$ is smooth on $I\times[s,t]$ and so
$(y^1-y^2)_+\vph \in W^{1,1}(I\times(s,t))$ with weak derivative
$$\pl{}{t}(y^1-y^2)_+\vph=\left\{
\begin{aligned}
(y^1_t-y^2_t)\vph & \qquad\text{if }y^1>y^2\\
0 & \qquad\text{otherwise.}
\end{aligned}
\right.
$$
(See, e.g., \cite[Lemma 7.6]{GT}.)
Therefore the function $t\mapsto \|\big(y^1(t)-y^2(t)\big)_+\vph\|_{L^1(\R)}$
lies in $W^{1,1}((s,t))$ and 
\beqa
\frac{d}{d t}\|\big(y^1(t)-y^2(t)\big)_+\vph\|_{L^1(\R)}
& = \int_{\{y^1>y^2\}}(y^1_t-y^2_t)\vph\,dx\\
& = \int_{\{y^1>y^2\}}\big(\arctan(y^1_x)-\arctan(y^2_x)\big)_x\vph\,dx
\eeqa
where the integrals are over the time slice at time $t$.
Let $R>0$ be large enough so that the support of $\vph$ lies within $(-R,R)$.
The set $\{y^1>y^2\}\intersect (-R,R)$ is a countable  union of disjoint open intervals
$I_i=(a_i,b_i)$.
Moreover, by definition of $I_i$ we have $y^1_x\geq y^2_x$ at $a_i$ if $a_i\neq -R$ and 
$y^1_x\leq y^2_x$ at $b_i$ if $b_i\neq R$. Because $\arctan$ is an increasing function, 
this then implies that $\arctan(y^1_x)\geq \arctan(y^2_x)$ at $a_i$ if $a_i\neq -R$ and
$\arctan(y^1_x)\leq \arctan(y^2_x)$ at $b_i$ if $b_i\neq R$.
Integrating by parts on the time slice at time $t$, keeping in mind that $\vph(-R)=\vph(R)=0$, gives
\beqa
\int_{I_i}\big(\arctan(y^1_x)-\arctan(y^2_x)\big)_x\vph\,dx
&= -\int_{I_i}\big(\arctan(y^1_x)-\arctan(y^2_x)\big)\vph_x\,dx\\
& \qquad+\left[\big(\arctan(y^1_x)-\arctan(y^2_x)\big)\vph\right]_{x=a_i}^{x=b_i}\\
&\leq -\int_{I_i}\big(\arctan(y^1_x)-\arctan(y^2_x)\big)\vph_x\,dx.
\eeqa
Therefore
\beqa
\frac{d}{d t}\|\big(y^1(t)-y^2(t)\big)_+\vph\|_{L^1(\R)} & \leq -\int_{\{y^1>y^2\}}\big(\arctan(y^1_x)-\arctan(y^2_x)\big)\vph_x\,dx\\
& \leq \pi\int |\vph_x|\,dx,
\eeqa
because $|\arctan|$ is bounded by $\frac\pi{2}$.
This can be integrated to give \eqref{suff_to_p}.
\end{proof}

\begin{proof}[{Proof of Lemma \ref{init_data_attained_lem}}]
We need to show that for any $\eta>0$ we have
\beql{objective}
\|y(t)-y_0\|_{L^1(\R)}\leq \eta
\eeq
for sufficiently small $t>0$. 
Because mollifications converge in $L^1$, for some fixed $m\in\N$ and all $n\geq m$ we have
\beql{ing2}
\|y_0^n-y_0\|_{L^1(\R)}\leq \eta/16.
\eeq
In particular,  we have
\beql{mini_ing}
\|y_0^n-y_0^m\|_{L^1(\R)}\leq \eta/8.
\eeq
We next claim that $y^m(t)\to y^m_0$ in $L^1(\R)$ as $t\downto 0$,
and in particular
\beql{ing1}
\|y^m(t)- y^m_0\|_{L^1(\R)}\leq \eta/4
\eeq
for sufficiently small $t>0$.
The mollification $y^m_0$ is globally Lipschitz, so the estimates of Ecker-Huisken 
\cite{EH1} tell us that $y^m(t)\to y^m_0$ (globally) uniformly as $t\downto 0$. 
The claim will follow if we can demonstrate that 
$\|y^m(t)\|_{L^1(\R)}\to \|y^m_0\|_{L^1(\R)}$ as $t\downto 0$, or
(equivalently in this case) that 
\beql{limsup_req}
\limsup_{t\downto 0}\|y^m(t)\|_{L^1(\R)}\leq \|y^m_0\|_{L^1(\R)},
\eeq
i.e., that 
$y^m(t)$ does not lose $L^1$ norm at spatial infinity as $t\downto 0$. To prove 
this we apply the separation estimate of Lemma \ref{separation_lem} in the case that $I=\R$, $s=0$, $y^2\equiv 0$ and $y^1=y^m$. 
That tells us that 
$$\|\big(y^m(t)\big)_+\|_{L^1(\R)}\leq \|\big(y^m_0\big)_+\|_{L^1(\R)}+2\pi t$$
for all $t\geq 0$. Repeating with $y^m$ replaced by $-y^m$ and adding then gives
$$\|y^m(t)\|_{L^1(\R)}\leq \|y^m_0\|_{L^1(\R)}+4\pi t$$
for all $t\geq 0$, which gives \eqref{limsup_req} and hence
the claim and \eqref{ing1}.

We now appeal to the separation estimate of Lemma \ref{separation_lem} in the case that $I=\R$, $s=0$, $y^1=y^n$ and $y^2=y^m$. 
Together with \eqref{mini_ing} this implies that
\beqa
\|\big(y^n(t)-y^m(t)\big)_+\|_{L^1(\R)} &\leq \|\big(y^n_0-y^m_0\big)_+\|_{L^1(\R)}+2\pi t\\
& \leq \eta/8+2\pi t\\
&\leq \eta/4
\eeqa
for sufficiently small $t> 0$. By repeating with $n$ and $m$ switched, we obtain 
$\|\big(y^m(t)-y^n(t)\big)_+\|_{L^1(\R)}\leq \eta/4$, and so
$$\|y^n(t)-y^m(t)\|_{L^1(\R)} \leq \eta/2$$
for sufficiently small $t> 0$.
We would now like to take the limit $n\to\infty$. We know from the construction of $y(t)$ in Lemma \ref{y_construct_lem} that $y^n\to y$ smoothly locally on $\R\times (0,\infty)$, and in particular $y^n(t)\to y(t)$ locally uniformly on $\R$. Therefore for  $R>0$
we can compute
\beqa
\|y(t)-y^m(t)\|_{L^1([-R,R])} & \leq \|y(t)-y^n(t)\|_{L^1([-R,R])}+\|y^n(t)-y^m(t)\|_{L^1([-R,R])}\\
& \leq \|y(t)-y^n(t)\|_{L^1([-R,R])}+\eta/2\\
&\to \eta/2,
\eeqa
as $n\to\infty$, so $\|y(t)-y^m(t)\|_{L^1([-R,R])}\leq \eta/2$
for sufficiently small $t>0$ independent of $R$.
Taking a limit $R\to\infty$ then gives 
$$\|y(t)-y^m(t)\|_{L^1(\R)}\leq \eta/2$$ 
for sufficiently small $t>0$.
Combining with \eqref{ing1} and \eqref{ing2} (the latter with $n=m$)
gives \eqref{objective} for sufficiently small $t>0$, as required.
\end{proof}

It turns out to be possible to localise the key estimates in the paper, specifically the Harnack inequality in Section \ref{basic_harnack_sect},
in order to improve the $L^1$ existence of this paper to more general initial data such as 
a nonatomic Radon measure of possibly infinite mass, although the initial data will necessarily be attained in a weaker sense than in Theorem \ref{L1_exist_thm}. 
The following theorem is proved in \cite{arjun_thesis, alt_arj_thesis}.

\begin{thm}[{Flowing from measure initial data}]
\label{meas_exist_thm}
Let $\nu  = y_0\cl^1 + \nu_\mathrm{sing} $
be a {\normalfont nonatomic} real-valued Radon measure\footnote{By real-valued Radon measure we mean an element of the dual to $C_c^0(\R,\R)$, cf. \cite[\S 1.8, Theorem 1]{evans_gariepy}.}, 
$y_0\in L^1_{loc}(\R)$,
decomposed into its absolutely continuous and singular parts, and let $\Omega = \R \setminus 
\mathrm{supp}(\nu_\mathrm{sing})$.
Then there exists a smooth solution 
$y:\R\times (0,\infty)\to \R$ to 
\eqref{GCSF} 
such that 
\beqa
\label{last_one}
y(\cdot,t) \cl^1 \rightharpoonup \nu & \qquad\text{weakly on }\R  \\
y(\cdot,t) \to y_0 & \qquad\text{strongly in } \, L^1_{loc}(\Omega),
\eeqa
as $t\downto 0$.
\end{thm}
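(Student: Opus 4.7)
The plan is to adapt the proof of Theorem \ref{L1_exist_thm}, replacing the global estimates of Section \ref{basic_harnack_sect} and Theorem \ref{refine_thm1} by localised versions suited to initial data of possibly infinite mass. First I would approximate $\nu$ by a sequence $y_0^n \in C_c^\infty(\R)$ of smooth compactly supported densities, obtained by mollifying $\nu$ at scale $\ep_n\downto 0$ and truncating on $[-R_n,R_n]$ with $R_n\upto\infty$, chosen so that $y_0^n\cl^1\rightharpoonup \nu$ weakly on $\R$ and $y_0^n\to y_0$ in $L^1_{loc}(\Omega)$. The latter convergence uses that $\nu$ coincides with $y_0\cl^1$ on $\Omega$, so standard mollifier properties give $L^1(K)$-convergence for any compact $K\subset\Omega$. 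Flowing each $y_0^n$ by the Ecker-Huisken theory produces smooth solutions $y^n:\R\times [0,\infty)\to\R$ of \eqref{GCSF}.

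The main obstacle is obtaining $n$-independent local $C^k$ estimates for $y^n$ on compact subsets of $\R\times(0,\infty)$. Since $\nu$ may have infinite total mass the global delayed height bound of Theorem \ref{refine_thm1} is unavailable; instead I would establish a localised version of Theorem \ref{est_thm_harnack}. Specifically, for any bounded open interval $J=(a,b)$ and $J'\subset\subset J$, and writing $A_J(t):=\int_J |y^n(\cdot,t)|\,dx$, the localised estimate should yield a bound on $|y^n_x|$ on $J'\times[t_0,T]$ after a threshold time $t_0$ proportional to $A_J(0)$. The strategy is to repeat the proof of Theorem \ref{est_thm_harnack} with the accumulated-area function computed against a spatial cut-off rather than $\chi_{(-\infty,x]}$; cross terms arising from differentiating the cut-off can be controlled using the \emph{a priori} bound $|\arctan y^n_x|<\pi/2$. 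Because $\nu$ is \emph{nonatomic}, for every $x\in\R$ and every $\eta>0$ one can find arbitrarily small open $J\ni x$ with $|\nu|(\bar J)<\eta$, hence $A_J(0)<2\eta$ for sufficiently large $n$, so the threshold time at $x$ can be made arbitrarily small; at points of $\Omega$ the additional control $y_0^n\to y_0$ in $L^1_{loc}(\Omega)$ makes the bounds quantitative as $t\downto 0$. A diagonal subsequence of $y^n$ then converges smoothly locally on $\R\times (0,\infty)$ to a smooth solution $y$ of \eqref{GCSF}.

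The attainment of initial data uses a localised version of Lemma \ref{separation_lem}: for smooth solutions $y^1, y^2$ of \eqref{GCSF} and a cut-off $\vph\in C_c^\infty(\R,[0,1])$, essentially the computation in the proof of Lemma \ref{separation_lem} gives
\[
\left|\int (y^1(t)-y^2(t))\,\vph\,dx - \int (y^1(0)-y^2(0))\,\vph\,dx\right| \leq \pi t\int |\vph_x|\,dx.
\]
Applying this with $y^1=y^n$ and $y^2\equiv 0$, passing to the limit $n\to\infty$ using the smooth local convergence $y^n\to y$ on $\R\times(0,\infty)$ and the weak convergence $y_0^n\cl^1\rightharpoonup \nu$, and then letting $t\downto 0$, yields $y(\cdot,t)\cl^1\rightharpoonup \nu$ weakly on $\R$. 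For strong $L^1_{loc}$-convergence on $\Omega$, fix a compact $K\subset\Omega$ and a cut-off $\vph\in C_c^\infty(\Omega,[0,1])$ with $\vph\equiv 1$ on $K$; applying the localised separation estimate to the pairs $(y^n, y^m)$ for $n,m$ large, and mimicking Lemma \ref{init_data_attained_lem}, gives $\|y^n(\cdot,t)-y^m(\cdot,t)\|_{L^1(K)}\leq \eta/2$ for small $t$ uniform in large $n,m$. Combining with $y_0^n\to y_0$ in $L^1(K)$ and the smooth local convergence $y^n\to y$ for $t>0$ then gives $\|y(\cdot,t)-y_0\|_{L^1(K)}\to 0$.

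The principal technical obstacle is the localised Harnack estimate: in Theorem \ref{est_thm_harnack} spatial decay of the solution justified applying the maximum principle to the Harnack quantity $\h=\ca-2t\arctan\al$ without worrying about boundary contributions, whereas in the localised setting one must bookkeep the flux terms arising from the cut-off and verify that nonatomicity of $\nu_{\mathrm{sing}}$ alone is enough to drive the threshold time to zero along all of $\Omega$. This technical development is the content of \cite{arjun_thesis}.
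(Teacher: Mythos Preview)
The paper does not actually prove Theorem \ref{meas_exist_thm}: it only states the result and defers the proof to \cite{arjun_thesis}, offering as a hint that one should ``localise the key estimates in the paper, specifically the Harnack inequality in Section \ref{basic_harnack_sect}''. Your proposal follows precisely this outline --- approximate $\nu$ by smooth densities, extract compactness from a localised Harnack/delayed-regularity estimate (using nonatomicity to push the local threshold time to zero), and recover the initial data via a localised separation estimate --- so it is entirely consistent with what the paper sketches, and you correctly identify the localised Harnack estimate as the substantive new ingredient deferred to \cite{arjun_thesis}.

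One small point worth tightening: the displayed inequality you state for $\int (y^1-y^2)\vph$ controls only the \emph{signed} integral and suffices for the weak convergence $y(\cdot,t)\cl^1\rightharpoonup\nu$, but for the strong $L^1_{loc}(\Omega)$ convergence you need the version with positive parts, i.e.\ the cut-off form of Lemma \ref{separation_lem} itself, $\|(y^n(t)-y^m(t))_+\vph\|_{L^1}\leq \|(y^n_0-y^m_0)_+\vph\|_{L^1}+\pi t\int|\vph_x|$. You implicitly invoke this when you say ``mimicking Lemma \ref{init_data_attained_lem}'', but the two estimates are not the same and should be distinguished.
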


The notion of convergence of measures taken in 
\eqref{last_one} is weak-$*$ convergence, i.e., for all $\vph\in C_c^0(\R)$ we must have
$$\int y(\cdot,t)\vph dx\to\int \vph d\nu$$
as $t\downto 0$.

\emph{Acknowledgements:} 
AS was supported by EPSRC studentship EP/R513374/1.
PT  was supported by EPSRC grant EP/T019824/1.
For the purpose of open access, the authors have applied a Creative Commons Attribution (CC BY) licence to any author accepted manuscript version arising.

\parskip 0pt

\vskip 0.2cm

\noindent
AS: 
\url{https://sites.google.com/view/sobnack}


\noindent
{\sc Department of Mathematics, The University of Texas at Austin, TX 78712, USA.}

\noindent
PT: 
\url{https://warwick.ac.uk/fac/sci/maths/people/staff/peter_topping}

\noindent
{\sc Mathematics Institute, University of Warwick, Coventry,
CV4 7AL, UK.}


\begin{thebibliography}{99}

\bibitem{AC} B. Andrews and J. Clutterbuck, 
\textit{Time-interior gradient estimates for quasilinear parabolic equations.}
Indiana University Mathematics Journal {\bf 58} (2009) 351--380.


\bibitem{angenent1991parabolic}
S. Angenent,
\emph{Parabolic equations for curves on surfaces: {Part II}.
  {Intersections}, blow-up and generalized solutions}, Annals of Mathematics
  \textbf{133} (1991) 
  171--215.
 
\bibitem{BR} T. Bourni and M. Reiris, \textit{Compact Curve Shortening Flow Solutions Out of Non-Compact Curves.} International Mathematics Research Notices, {\bf 2024} 804--816. 

\bibitem{CK} K.-S. Chou and Y.-C. Kwong, 
\textit{General initial data for a class of parabolic equations including the curve shortening problem.} 
Discrete and continuous dynamical systems {\bf 40} (2020) 2963--2986.

\bibitem{CZ} K.-S. Chou and X.-P. Zhu,
\textit{Shortening complete plane curves.}
J. Differential Geometry {\bf 50} (1998) 471--504.

\bibitem{clutterbuck_thesis} J. Clutterbuck,
`Parabolic equations with continuous initial data.' PhD thesis, Australian National University (2004).

\bibitem{CM} T. H. Colding and W. P. Minicozzi II,
\textit{Sharp estimates for mean curvature flow of graphs.}
J. reine angew. Math. {\bf 574} (2004) 187--195.

\bibitem{DS} P. Daskalopoulos and M. Saez,
\textit{Uniqueness of entire graphs evolving by mean curvature flow.}
J. reine angew. Math. {\bf 796} (2023) 201--227.

\bibitem{DN_L} E. Di Nezza and C. H. Lu,
\textit{Uniqueness and short time regularity of the weak K\"ahler-Ricci flow,} 
Adv. Math. {\bf 305} (2017), 953--993.

\bibitem{EH1} K. Ecker and G. Huisken, 
\emph{Mean curvature evolution of entire graphs.}
Annals of Math. {\bf 130} (1989) 453--471.

\bibitem{EH2} K. Ecker and G. Huisken, 
\textit{Interior estimates for hypersurfaces moving by mean curvature.}
Invent. Math. {\bf 105} (1991) 547--569. 

\bibitem{evans_gariepy} L. C. Evans and R. F. Gariepy,
`Measure Theory and Fine Properties of Functions, Revised Edition.' Chapman and Hall/CRC (2015).

\bibitem{ES3} L. C. Evans and J. Spruck,
\textit{Motion of level sets by mean curvature III.}
J. Geom. Anal. {\bf 2} (1992) 121--150. 

\bibitem{gage_hamilton} M. Gage and R. S. Hamilton,
\textit{The heat equation shrinking convex plane curves.}
J. Differential Geom. (1986) 69--96. 

\bibitem{GT} D. Gilbarg and N. S. Trudinger, `Elliptic Partial Differential
Equations of Second Order.' (Second edition.) Springer-Verlag (1983).

\bibitem{grayson} M. A. Grayson, \textit{The heat equation shrinks embedded plane curves to round points.} J. Differential Geom. {\bf 26} (1987) 285--314.

\bibitem{grayson2} M. A. Grayson, \textit{Shortening embedded curves.} Annals of Math. {\bf 129} (1989), 71--111.

\bibitem{halldorsson} H. P. Halldorsson,
\textit{Self-similar solutions to the curve shortening flow.} T.A.M.S. {\bf 364}
(2012) 5285--5309.


\bibitem{ilmanen} T. Ilmanen, \textit{Generalized flow of sets by mean curvature on a manifold.} Indiana University Mathematics Journal {\bf 41} (1992) 671--705.

\bibitem{ishimura} N. Ishimura, \textit{Curvature evolution of plane curves with prescribed opening angle.} Bull. Austral. Math. Soc. {\bf 52} (1995) 287--297.

\bibitem{lauer} J. Lauer,
\textit{A new length estimate for curve shortening flow and low regularity initial data.} Geom. Funct. Anal. {\bf 23} (2013) 1934--1961. 

\bibitem{nagase2009interior}
Y. Nagase and Y. Tonegawa, 
\emph{Interior gradient estimate for {$1$--D}  anisotropic curvature flow.} 
Boletim da Sociedade Paranaense de  Matem{\'a}tica \textbf{23} (2009) 
93--98.

\bibitem{neves} A. Neves, 
\emph{Finite time singularities for Lagrangian mean curvature flow.} Annals of Math. {\bf 177} (2013) 1029--1076.

\bibitem{peachey} L. T. Peachey, `Geometric flows without boundary data at infinity.'
PhD thesis, University of Warwick (2023).
\url{https://wrap.warwick.ac.uk/id/eprint/179198/}

\bibitem{P1} G. Perelman,
\textit{The entropy formula for the Ricci flow and its geometric applications.}
\url{http://arXiv.org/math.DG/0211159}v1  (2002).

\bibitem{arjun_thesis} A. Sobnack, `Geometric regularity properties of the Curve Shortening Flow.' PhD thesis. September 2024.
\url{https://wrap.warwick.ac.uk/id/eprint/192063}

\bibitem{alt_arj_thesis} A.~Sobnack, \emph{A delayed area-to-height estimate for the Curve Shortening Flow}. \url{https://arxiv.org/abs/2502.16581} (2025).

\bibitem{ST_modulus} A. Sobnack and P. M. Topping,
`Monotonicity of the modulus under curve shortening flow.'
\url{https://arxiv.org/abs/2409.03098} (2024).

\bibitem{ST3} A. Sobnack and P. M. Topping, \textit{In preparation.}

\bibitem{hct} {P. M. Topping},
\emph{Remarks on Hamilton's Compactness Theorem for Ricci flow.}
J. Reine Angew. Math.
(Crelles Journal) {\bf 2014}, no. 692 (2014) 173--191.

\bibitem{TY1} {P. M. Topping} and {H. Yin}, \emph{Sharp Decay Estimates for the Logarithmic Fast Diffusion Equation and the Ricci Flow on Surfaces.\/} 
Annals of PDE {\bf 3} (2017). \\
\url{https://doi.org/10.1007/s40818-017-0024-x}



\end{thebibliography}
\end{document}